\newtheorem{prop}{Proposition}
\newtheorem{thm}{Theorem}
\newtheorem{lemma}{Lemma}
\def\EDIT#1{{\textcolor{black}{#1}}}
\def\ES#1{{\textcolor{black}{#1}}}
\begin{document}

\begin{center}
{\bf\Large A geometric investigation into the tail dependence of vine copulas}\\
{\large Emma S.\ Simpson, Jennifer L.\ Wadsworth and Jonathan A.\ Tawn\\}
Lancaster University
\end{center}

\begin{abstract}
\EDIT{Vine copulas are a type of multivariate dependence model, composed of a collection of bivariate copulas that are combined according to a specific underlying graphical structure.} Their flexibility and practicality in moderate and high dimensions have contributed to the popularity of vine copulas, but relatively little attention has been paid to their extremal properties. To address this issue, we present results on the tail dependence properties of some of the most widely studied vine copula classes. We focus our study on the coefficient of tail dependence and the asymptotic shape of the sample cloud, which we calculate using the geometric approach of \cite{Nolde2014}. We offer new insights by presenting results for trivariate vine copulas constructed from asymptotically dependent and asymptotically independent bivariate copulas, focusing on bivariate extreme value and inverted extreme value copulas, with additional detail provided for logistic and inverted logistic examples. We also present new theory for a class of higher dimensional vine copulas, constructed from bivariate inverted extreme value copulas.
\end{abstract}

\noindent{\bf Keywords:} coefficient of tail dependence; gauge function; multivariate extremes; vine copula.

\section{Introduction}\label{sec:intro}
In multivariate extreme value analysis, the tail dependence properties of variables are an important consideration for model selection. In particular, one may be interested in whether or not they exhibit so-called asymptotic dependence, where the largest values can occur simultaneously across all variables; see \cite{Coles1999}. Suppose we are interested in a model for the $d$ random variables $\bm{X}=(X_1,\ldots,X_d)$, which we assume have standard exponential margins to focus only on dependence, i.e., $\Pr(X_i<x)=1-e^{-x}$, $x\geq 0$, for $i=1,\ldots,d$. \EDIT{For any subset of these variables, $\bm{X}_\mathcal{C} = (X_i:i\in \mathcal{C})$, with $\mathcal{C}\subseteq \mathcal{D}=\{1,\ldots,d\}$ and $|\mathcal{C}|\geq 2$, and any $j\in\mathcal{C}$, one can consider the measure
\begin{align}
	\chi_\mathcal{C}=\lim_{u\rightarrow\infty}\frac{\Pr\left(X_i>u;i\in \mathcal{C}\right)}{\Pr(X_j>u)}=\lim_{u\rightarrow\infty}e^u\Pr\left(X_i>u;i\in \mathcal{C}\right),
\label{eqn:chi}
\end{align}}
which corresponds to the limiting probability that all variables are above some high threshold $u$, given that any one of the variables exceeds $u$. If $\chi_\mathcal{C}>0$, all variables in $\bm{X}_\mathcal{C}$ exhibit asymptotic dependence, while $\chi_\mathcal{C}=0$ means that not all variables in $\bm{X}_\mathcal{C}$ can be simultaneously large. In the latter case, if $|\mathcal{C}|=2$, the two variables cannot be simultaneously extreme, and are said to exhibit asymptotic independence; if $|\mathcal{C}|>2$, it is still possible to have $\chi_{\tilde{\mathcal{C}}}>0$ for any $\tilde{\mathcal{C}}\subset \mathcal{C}$. That is, variables indexed by $\tilde{\mathcal{C}}$ could take their largest values simultaneously while at least one of those indexed by $\mathcal{C}\setminus\tilde{\mathcal{C}}$ are of smaller order. The collection of all sets of variables which can or cannot be simultaneously extreme corresponds to a more complicated extremal dependence structure; see \cite{Goix2017} or \cite{Simpson2020}.

Moreover, if $\chi_\mathcal{C}=0$, there could be some sub-asymptotic dependence between $\bm{X}_\mathcal{C}$, despite the lack of asymptotic dependence in the limit, and the measure $\chi_\mathcal{C}$ does not tell the full story. To investigate this behaviour further, it is common to consider the coefficient of tail dependence, introduced by \cite{Ledford1996}. Again for a subset of exponential variables $\bm{X}_\mathcal{C}$, with $|\mathcal{C}|\geq 2$, this is defined via the relation
\begin{align}
	\Pr\left(X_i>x:i\in \mathcal{C}\right)\sim L_\mathcal{C}(e^x)e^{-x/\eta_\mathcal{C}},
\label{eqn:eta}
\end{align}
as $x\rightarrow\infty$, where $L_\mathcal{C}$ denotes a function that is slowly varying at infinity, and $\eta_\mathcal{C}\in(0,1]$. If $\eta_\mathcal{C}=1$ and $L_\mathcal{C}(x)\not\rightarrow 0$, as $x\rightarrow\infty$, the variables in $\bm{X}_\mathcal{C}$ are asymptotically dependent. For $\eta_\mathcal{C}<1$, these variables cannot be simultaneously large, and the value of the coefficient quantifies the strength of sub-asymptotic dependence between the variables. The set of measures $\{\chi_\mathcal{C},\eta_\mathcal{C}:\mathcal{C}\subseteq \mathcal{D}, |\mathcal{C}|\geq 2\}$ therefore provide a summary of the key extremal dependence features of $\bm{X}$.

Often, the value of $\eta_\mathcal{C}$ can be calculated directly from~\eqref{eqn:eta} for a given model, but in some cases, only the joint density of $\bm{X}_\mathcal{C}$ can be specified in closed form, and not the required joint survivor function. \cite{Nolde2014} presents a strategy to overcome this issue, based on the geometry of scaled random samples from the joint distribution of $\bm{X}_\mathcal{C}$. A simplified version of the approach of \cite{Nolde2014}, which we discuss further in Section~\ref{subsec:Nolde}, assumes standard exponential margins, and joint density $f_\mathcal{C}(\bm{x}_\mathcal{C})$, with the idea being to study the gauge function $g_\mathcal{C}(\bm{x}_\mathcal{C})$ such that
\begin{align}
	-\ln f_\mathcal{C}(t\bm{x}_\mathcal{C}) \sim tg_\mathcal{C}(\bm{x}_\mathcal{C}),
\label{eqn:gaugeMain}
\end{align}
as $t\rightarrow\infty$, with $g_\mathcal{C}(\bm{x}_\mathcal{C})$ being homogeneous of order 1. The limiting shape of suitably scaled samples from $\bm{X}_\mathcal{C}$ is described by the set of points where the gauge function is at most one, i.e., $\{\bm{x}_\mathcal{C}:g_\mathcal{C}(\bm{x}_\mathcal{C})\leq 1\}$, and studying this set can reveal the value of $\eta_\mathcal{C}$ and provide insight into other aspects of the extremal dependence structure; see \cite{Nolde2020}. We present some example gauge function calculations in Section~\ref{subsec:bivariateEtas} for the case where $|\mathcal{C}|=2$, for both asymptotically dependent and asymptotically independent models, and demonstrate how they can be used to obtain $\eta_\mathcal{C}$. 

One drawback of this method is that it is only applicable when the joint density of $\bm{X}_\mathcal{C}$ can be obtained analytically. It may be the case that we have a closed form joint density for variables $\bm{X}$, but not for all $\bm{X}_\mathcal{C}$ with $\mathcal{C}\subset \mathcal{D}$. \cite{Nolde2020} show how to derive lower-dimensional gauge functions from higher-dimensional ones, and in Section~\ref{sec:gauge}, we review this technique for the calculation of $g_\mathcal{C}(\bm{x}_\mathcal{C})$ and $\eta_\mathcal{C}$ in such cases. In our study of vine copulas, which have dimension $d\geq 3$, this approach can be necessary to obtain even some bivariate results.

\EDIT{In this paper, we focus on investigating the tail behaviour of vine copulas. These models exploit the wide range of existing parametric bivariate copula models to create parametric copula models for higher dimensions, where there are fewer options available.} This allows for the construction of flexible models with the possibility of capturing a wide range of dependence features. The idea of combining bivariate copulas in this way was first proposed by \cite{Joe1996}; developed further by \cite{Bedford2001,Bedford2002}, who proposed the use of a type of graphical model called vines to aid the modelling procedure; and further studied in an inferential context by \cite{Aas2009}. A textbook treatment of these models is provided by \cite{Kurowicka2010}. We give an introduction to vine copula modelling in Section~\ref{sec:vines}. \EDIT{Vine copulas are widely used in financial applications;  a summary of these applications is provided by \cite{Aas2016}, with examples including \cite{Brechmann2012} and \cite{Dissmann2013}.}


\EDIT{Vine copulas reduce model formulation to a series of pairwise copula selections, and therefore appear to be ideal for modelling extremal dependence since, as noted earlier, such dependence is known to have complex structure. For vine copulas over variables indexed by $\mathcal{D}$, \cite{Joeetal2010} have made major progress in deriving general results about $\chi_{\mathcal{D}}$, defined in \eqref{eqn:chi}, for any vine copula. In particular, they have determined some relationships of $\chi_{\mathcal{D}}$ with the values of $\chi_{\mathcal{C}}$ (for $|\mathcal{C}|=2$) associated with the bivariate copulas used in the vine construction. Primarily, they consider the case where all pairwise $\chi_\mathcal{C}$ are non-zero, and therefore focus only on asymptotically dependent copulas. They also study the result of imposing asymptotic independence or asymptotic dependence in certain pair copulas, and how this leads to constraints on $\chi_{\mathcal{D}}$.}

\EDIT{Some pairwise copulas have $\chi_\mathcal{C}=0$, with a range of examples given by \cite{Heffernan2000}. Particularly noteworthy cases include the pairwise Gaussian copula and the Morgernstern copula (see Example~2.1 of \cite{Joeetal2010}), with parameters $-1<\rho<1$ and $-1<\theta<1$, respectively. For vine copulas, when some of the pairwise $\chi_\mathcal{C}=0$, the results of \cite{Joeetal2010} only give that $\chi_{\mathcal{D}}=0$, and they fail to give any information about the dependence in the joint tail of variables, e.g., the probability that all of the variables are simultaneously large. In that case we are interested in the numerator of~\eqref{eqn:chi} (prior to it being taken to its limit for $\chi_{\mathcal{D}}$). When $\chi_{\mathcal{D}}=0$, all we know  is that this joint probability is smaller order than the marginal probability of one of the variables being large (as in the denominator of~\eqref{eqn:chi}). Such joint tail probabilities are important in characterising the tail, and for assessing risk in applications \citep{Coles1994,Heffernan2004}.}

\EDIT{The tail parameter $\eta_{\mathcal{C}}$ for all $\mathcal{C}\subseteq \mathcal{D}$ in~\eqref{eqn:eta} is important, as it captures the level of asymptotic independence, with $\eta_{\mathcal{C}}=1$ corresponding to asymptotic dependence (for all cases where $\chi_{\mathcal{C}}>0$) and $0<\eta_{\mathcal{C}}<1$ corresponding to levels of asymptotic independence. For example, for the bivariate Gaussian copula, $\eta_{\{1,2\}}=(1+\rho)/2$ (with $-1<\rho<1$ denoting the usual ``correlation coefficient''), and for the Morgenstern copula, $\eta_{\{1,2\}}=1/2$ for all $\theta$. So, although these two copulas have identical $\chi_{\{1,2\}}$ values, they have different values of $\eta_{\{1,2\}}$ unless $\rho=0$.}

\EDIT{Our paper therefore differs from \cite{Joeetal2010} in that we aim to find $\eta_{\mathcal{C}}$ in cases where their results simply give $\chi_{\mathcal{C}}=0$, for all $\mathcal{C}\subseteq \mathcal{D}$. With this, we seek to better understand how the bivariate copulas and underlying graphical structure used in the construction of a vine copula affect these additional extremal dependence features of the variables, and to be the first to study the gauge function for vine copulas.}

Throughout this paper, we consider exponential marginal distributions, but allow a variety of different bivariate copulas to be used in the vine copula construction. If only bivariate Gaussian copulas are used in this construction, the overall joint distribution of the variables will also be Gaussian \citep{Joe1996,Joeetal2010}. Since the tail dependence features of the Gaussian model are well-studied in the literature, we focus on cases where the pair copulas are from extreme value or inverted extreme value classes of distributions \citep{Ledford1997,Papastathopoulos2016}. These classes are widely studied in the extreme value literature; while they are not in themselves parametric distributions, they do include a range of well-known parametric examples \citep{ColesAndTawn1991, Cooley2010, Ballani2011}. Bivariate extreme value distributions exhibit asymptotic dependence, while their inverted counterparts exhibit asymptotic independence. Studying these two classes is therefore sufficient to reveal a rich variety of structures within the vine copula framework.

Vine copula models provide an example of when the joint distribution function of the variables generally cannot be calculated analytically. Moreover, the joint densities corresponding to certain subsets of the variables often do not have closed forms. To study the tail behaviour of these models, we calculate $\eta_\mathcal{C}$ for several examples, through the application of the geometric approach of \cite{Nolde2014}. Our investigation reveals interesting features of the shape of the gauge function in~\eqref{eqn:gaugeMain} for vine copula models.

Having introduced the geometric methodology for studying extremal dependence in Section~\ref{sec:gauge}, and provided an overview of vine copula modelling in Section~\ref{sec:vines}, the remainder of the paper is structured as follows. In Section~\ref{sec:vinesIEV}, we present calculations and results for cases where each pair copula is from the inverted extreme value family of distributions. In higher than three dimensions, the underlying graphical structure of the copula is a further consideration, and we also present results for inverted extreme value pair copulas here, with two different types of underlying vine structure. In the trivariate and higher-dimensional examples, we present results for inverted logistic examples as a special case. In Section~\ref{sec:vinesIEVEV}, we return to the trivariate case, presenting results for vine copulas constructed from combinations of extreme value and inverted extreme value pair copulas.

\section{Geometric approaches for calculating $\eta_\mathcal{C}$}\label{sec:gauge}
\subsection{The geometric approach of \cite{Nolde2014}}\label{subsec:Nolde}
\cite{Nolde2014} proposes a method to calculate the coefficient of tail dependence $\eta_\mathcal{C}$ based on the shape of scaled random samples from the vector $\bm{X}_\mathcal{C}$. This follows earlier work by \cite{Balkema2010}, who showed that the limiting shape of the sample cloud could be used to determine the presence of asymptotic independence. Theorem 2.1 of \cite{Nolde2014} provides the result for marginal distributions with Weibull-type tails. We take a simplified approach by focusing on the special case where all margins have standard exponential distributions, which is possible without losing information about the extremal dependence properties of the variables. 

Interest lies with the gauge function $g_\mathcal{C}(\bm{x}_\mathcal{C})$, satisfying equation~\eqref{eqn:gaugeMain}. In this case, we consider the scaled random sample $\left(\bm{X}_{\mathcal{C},1}/\ln n,\ldots,\bm{X}_{\mathcal{C},n}/\ln n\right)$, as $n\rightarrow\infty$, with the scaling function $\ln n$ chosen due to the exponential margins. The sample cloud converges onto the compact set $G^*_\mathcal{C}=\{\bm{x}_\mathcal{C}\in\mathbb{R}^{|\mathcal{C}|}:g_\mathcal{C}(\bm{x}_\mathcal{C})\leq 1\}\subseteq[0,1]^{|\mathcal{C}|}$, which also has the property of being star-shaped, i.e., if $\bm{x}_\mathcal{C}\in G^*_\mathcal{C}$, then $t\bm{x}_\mathcal{C}\in G^*_\mathcal{C}$ for all $t\in(0,1)$. We denote the part of the boundary of this set where the gauge function equals one by $G_\mathcal{C}=\{\bm{x}_\mathcal{C}\in\mathbb{R}^{|\mathcal{C}|}:g_\mathcal{C}(\bm{x}_\mathcal{C})= 1\}\subset G^*_{\mathcal{C}}\subseteq[0,1]^{|\mathcal{C}|}$.

\cite{Nolde2014} shows that the coefficient of tail dependence can be calculated as
\begin{align}
\eta_\mathcal{C}=\min\left\{r:G_\mathcal{C}\cap [r,\infty)^{|\mathcal{C}|} \neq\emptyset\right\},
\label{eqn:NoldeEta}
\end{align}
which \cite{Nolde2020} show is equivalent to
\begin{align}
\eta_\mathcal{C}=\left\{\min\limits_{\bm{x}_\mathcal{C}:\min(\bm{x}_\mathcal{C})=1}g_\mathcal{C}\left(\bm{x}_\mathcal{C}\right)\right\}^{-1}=\left\{\min\limits_{\bm{x}_\mathcal{C}:\min(\bm{x}_\mathcal{C})\geq 1}g_\mathcal{C}\left(\bm{x}_\mathcal{C}\right)\right\}^{-1}.
\label{eqn:NWetaGauge}
\end{align}
The case where $\arg\min_{\bm{x}_\mathcal{C}:\min(\bm{x}_\mathcal{C})=1}g_\mathcal{C}\left(\bm{x}_\mathcal{C}\right)=\bm{1}\in\mathbb{R}^{|\mathcal{C}|}$ occurs if and only if $\eta_\mathcal{C}=1/g_\mathcal{C}(\bm{1})$, corresponding to the intersection in~\eqref{eqn:NoldeEta} occurring when all variables are equal, i.e., when $x_j=x_k$ for all $j,k\in \mathcal{C}$. We also note that the quantity $1/g_\mathcal{C}(\bm{1})$ will always provide a lower bound for $\eta_\mathcal{C}$, and if $g_\mathcal{C}(\bm{1})=1$, then it must be the case that $\eta_\mathcal{C}=1$. When analytical minimisation of $g_\mathcal{C}$ is difficult or impossible, numerical investigation can be used to determine where the minimum occurs. 

This numerical minimisation may be undertaken using optimisation software such as \texttt{optim} in \textsf{R}. Implementation is simpler using the second form in~\eqref{eqn:NWetaGauge}, i.e., with $\min(\bm{x}_\mathcal{C})\geq 1$, and one can check that the numerical optimisation gives $\min(\bm{x}_\mathcal{C})=1$. It is advisable to compare results across a range of starting values of $\bm{x}_\mathcal{C}$ to ensure convergence. If convergence is not reached, an alternative is to carry out the investigation across the range of subspaces of interest, i.e., where different subsets of $\bm{x}_\mathcal{C}$ are equal to one, and to compare these results. \EDIT{While it is clearly preferable to use theoretical results, where this is not possible, this numerical approach can be useful. Moreover, where theoretical results are difficult to obtain, numerical studies may provide additional insight that can facilitate analytical calculations.}

We subsequently drop the subscript $\mathcal{C}$ from the set $G_\mathcal{C}$, density $f_\mathcal{C}$ and gauge function $g_\mathcal{C}$ when discussing the overall vector of variables $\bm{X}$, i.e., when $\mathcal{C}=\mathcal{D}$.

\subsection{Lower dimensional subsets}\label{subsec:lowerDimEta}
The method of \cite{Nolde2014} can only be used to calculate the coefficient $\eta_\mathcal{C}$ in cases where the density $f_\mathcal{C}(\bm{x}_\mathcal{C})$ can be obtained analytically. In some cases, including many vine copula examples, we may have the form of $f(\bm{x})$, but no closed form of $f_\mathcal{C}(\bm{x}_\mathcal{C})$ for certain subsets $\mathcal{C}\subset \mathcal{D}$, so the method cannot be directly applied to obtain $\eta_\mathcal{C}$. \cite{Nolde2020} use results on projections of sample clouds to show that the gauge function $g_\mathcal{C}(\bm{x}_\mathcal{C})$ can be obtained from the gauge function $g(\bm{x})$ for any set $\mathcal{C}\subset\mathcal{D}$ with $|\mathcal{C}|\in[1,d-1]$, by 
\begin{align}
g_\mathcal{C}\left(\bm{x}_\mathcal{C}\right)=\min_{x_i:i\notin \mathcal{C}}g(\bm{x}).
\label{eqn:lowerdimG}
\end{align}
Once this gauge function has been obtained, the remainder of the procedure to calculate $\eta_\mathcal{C}$ continues as in Section~\ref{subsec:Nolde}. \EDIT{In particular, this implies that
\begin{align}
 \eta_\mathcal{C} = \left\{\min\limits_{\bm{x}_\mathcal{C}:\min(\bm{x}_\mathcal{C})\geq 1}g_\mathcal{C}(\bm{x}_\mathcal{C})\right\}^{-1} = \left\{\min\limits_{\substack{\bm{x}:\min\{x_i:i\in\mathcal{C}\geq 1\}\\ ~~\min\{x_i:i\notin\mathcal{C}\geq 0\}}}g(\bm{x})\right\}^{-1}.
\label{eqn:etaCsubset}
\end{align}
If the gauge function still cannot be obtained analytically via the minimisation in~\eqref{eqn:lowerdimG}, numerical methods can again be exploited. Numerical calculation of $\eta_\mathcal{C}$ will require optimising equation~\eqref{eqn:lowerdimG} within equation~\eqref{eqn:NWetaGauge}, as in~\eqref{eqn:etaCsubset}, and the result of this optimisation procedure may again be used to motivate the theoretical calculation of $\eta_\mathcal{C}$.} In Sections~\ref{sec:vinesIEV}~and~\ref{sec:vinesIEVEV} we will study cases where we have the form of $g_{\{1,2,3\}}(x_1,x_2,x_3)$ and wish to deduce $\eta_{\{1,3\}}$, where the analytical form of $g_{\{1,3\}}(x_1,x_3)$ is not known. \EDIT{If theoretical arguments or numerical investigations suggest that the minimum in~\eqref{eqn:NWetaGauge} occurs when $x_1=x_3=1$, i.e., $\eta_{\{1,3\}}=1/g_{\{1,3\}}(1,1)$, one can focus solely on this case in~\eqref{eqn:etaCsubset}.} That is, only calculation of $g_{\{1,3\}}(1,1)=\min_{v} g(1,v,1)$ is needed, which may be possible even when the minimisation in~\eqref{eqn:lowerdimG} over the full range of $(x_1,x_3)$ values is not. \EDIT{In addition, if we can find any $\bm{x}$ with $x_i\geq 1,i\in\mathcal{C}$ and $x_i\geq 0,i\notin\mathcal{C}$, such that $g(\bm{x})=1$, then this must correspond to the required minimum, and $\eta_\mathcal{C}=1$.}

\subsection{Bivariate examples}\label{subsec:bivariateEtas}
To demonstrate the geometric approach discussed in Section~\ref{subsec:Nolde}, we consider six bivariate examples, corresponding to three distributions belonging to each of the asymptotic independence and asymptotic dependence classes. We also comment on interesting features relating to the shape of the set $G$ in each case. In this section, we generally use plots to determine where the intersection in~\eqref{eqn:NoldeEta} occurs, but we note that equation~\eqref{eqn:NWetaGauge} also holds in all cases.

We begin with the example of independent exponential variables, where it is straightforward to obtain the gauge function as $g(x_1,x_2)=x_1+x_2$, i.e., the set $G$ corresponds to the straight line $x_2=1-x_1$ for $x_1,x_2\in[0,1]$. This is demonstrated in case~(i) of Fig.~\ref{fig:bivariateGauge}, and it is clear that the smallest value of $r$ such that $G$ and $[r,\infty)^2$ do not intersect yields $\eta_{\{1,2\}}=1/2$. In Fig.~\ref{fig:bivariateGauge} we plot scaled samples of size 1000 from the various models; note that since the gauge function calculations are based on asymptotic results, some of this finite sample may lie outside the set $G$.

As a second asymptotically independent model, we consider a bivariate Gaussian copula model having exponential margins and covariance matrix $\Sigma$ with $\Sigma_{1,1}=\Sigma_{2,2}=1$ and $\Sigma_{1,2}=\Sigma_{2,1}=\rho\in[0,1)$. \cite{Nolde2014} shows that this model has gauge function
\[
	g(x_1,x_2) = (1-\rho^2)^{-1}\left(x_1 + x_2 -2\rho x_1^{1/2}x_2^{1/2}\right),
\]
with $x_1,x_2\geq 0$. This is demonstrated in case~(ii) of Fig.~\ref{fig:bivariateGauge} for $\rho=0.5$. Minimisation as in~\eqref{eqn:NWetaGauge} reveals that $\eta_{\{1,2\}}=1/g(1,1)=(1+\rho)/2=0.75$, corresponding to the known coefficient for a Gaussian model \citep{Ledford1996}.

Examples (iii), (v) and (vi) are based on the class of bivariate extreme value distributions, which in exponential margins, have a joint distribution function written as
\begin{align}
	F(x_1,x_2) = \exp\left[-V\left\{\frac{-1}{\ln\left(1-e^{-x_1}\right)},\frac{-1}{\ln\left(1-e^{-x_2}\right)}\right\}\right],
\label{eqn:BEVcop}
\end{align}
for $x_1,x_2\geq 0$ and some exponent measure $V(x,y)$ that is homogeneous of order $-1$ and takes the form
\begin{align}
	V(x,y) = 2\int_0^1\max\left(\frac{w}{x},\frac{1-w}{y}\right)dH(w),
\label{eqn:exponentMeasure}
\end{align}
for $x,y>0$, and spectral distribution $H$ satisfying the moment constraint $\int_0^1wdH(w)=1/2$. We let $V_1$, $V_2$ and $V_{12}$ denote the derivatives of the exponent measure with respect to the first, second and both components, respectively, where these are assumed to exist. One example of a method belonging to this class is the logistic distribution, with exponent measure 
\begin{align}
V(x,y)=\left(x^{-1/\alpha}+y^{-1/\alpha}\right)^\alpha, ~~~~\text{for some }\alpha\in(0,1] \text{ and } x,y>0,
\label{eqn:Vlogistic}
\end{align}
see \cite{Tawn1988}. We return to the extreme value distribution in case~(v), but first use these results to consider a final asymptotically independent model: the inverted bivariate extreme distribution. Models of this class are obtained by exchanging the upper and lower tail features in extreme value distribution~\eqref{eqn:BEVcop}. In exponential margins, the model has distribution function
\[
	F(x_1,x_2) = 1 - e^{-x_1} - e^{-x_2} + \exp\left\{-V\left(x_1^{-1},x_2^{-1}\right)\right\},
\]
for $x_1,x_2\geq 0$. Differentiating with respect to both components to obtain the density $f(x_1,x_2)$, we have
\begin{align}
	-\ln f(tx_1,tx_2) =& 2\ln(tx_1) + 2\ln(tx_2) + V\left\{(tx_1)^{-1} , (tx_2)^{-1}\right\} \nonumber\\
	&\hspace{1cm}- \ln\left[V_1\left\{(tx_1)^{-1} , (tx_2)^{-1}\right\}V_2\left\{(tx_1)^{-1}, (tx_2)^{-1}\right\}-V_{12}\left\{(tx_1)^{-1} , (tx_2)^{-1}\right\}\right]\nonumber\\
	=&2\ln(tx_1) + 2\ln(tx_2) + tV\left(x_1^{-1} , x_2^{-1}\right) - \ln\left\{t^4 V_1\left(x_1^{-1} , x_2^{-1}\right)V_2\left(x_1^{-1} , x_2^{-1}\right)-t^3V_{12}\left(x_1^{-1} , x_2^{-1}\right)\right\}\nonumber\\
	=&tV\left(x_1^{-1} , x_2^{-1}\right) + O(\ln t),
\label{eqn:bivariateIEV}
\end{align}
as $t\rightarrow\infty$, by exploiting the homogeneity of the exponent measure. That is, the gauge function is given by $g(x_1,x_2) = V\left(x_1^{-1} , x_2^{-1}\right)$, for $x_1,x_2\geq 0$. For the bivariate inverted logistic example, this corresponds to the gauge function  $g(x_1,x_2)=(x_1^{1/\alpha}+x_2^{1/\alpha})^\alpha$. This is demonstrated in case (iii) of Fig.~\ref{fig:bivariateGauge} for $\alpha=0.5$. The smallest value of $r$ such that $G$ and $[r,\infty)^2$ do not intersect is $2^{-0.5}=1/2^{\alpha}$, occurring when $x_1=x_2$. This corresponds to the known value of $\eta_{\{1,2\}}$ for this copula \citep{Ledford1996}.

\begin{figure}[t]
\begin{center}
\includegraphics[width=0.9\textwidth]{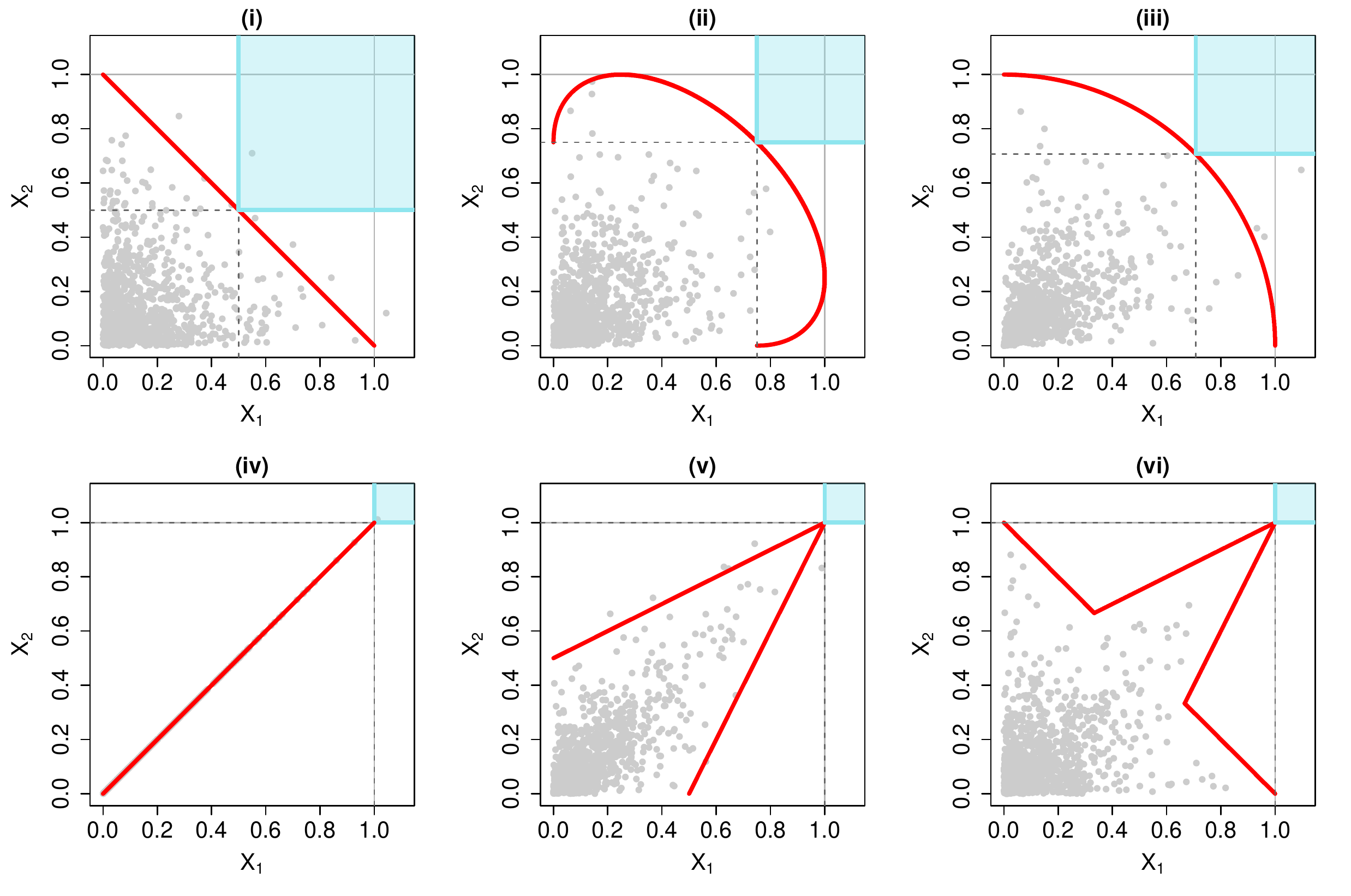}
\caption{Example of the geometric interpretation of $\eta_{\{1,2\}}$ for (i) independence, (ii) Gaussian, (iii) inverted logistic, (iv) perfect dependence, (v) logistic and (vi) asymmetric logistic models. For each model, we show 1000 scaled samples on exponential margins (grey); the set $G=\{(x_1,x_2)\in\mathbb{R}^2:g(x_1,x_2)=1\}$ (red); and the set $[\eta_{\{1,2\}},\infty)^2$ (blue).}
\label{fig:bivariateGauge}
\end{center}
\end{figure}

We now turn our attention to asymptotically dependent models, the most simple example of which corresponds to perfect dependence, demonstrated by case~(iv) of Fig.~\ref{fig:bivariateGauge}. In this case, the density does not exist, but since the set $G$ describes the boundary of the scaled sample cloud, it is clear that this corresponds to the line $x_1=x_2\in[0,1]$, and that considering the intersection of $[r,\infty)^2$ and $G$ in the usual way gives $\eta_{\{1,2\}}=1$.

\EDIT{Returning to the bivariate extreme value copula, with exponent measure~\eqref{eqn:exponentMeasure}, general results cannot easily be derived, as illustrated in Section~\ref{subsec:-logc_EV} of the Supplementary Material.} However, progress is possible if we assume that the corresponding spectral density $h(w)$ places no mass on $\{0\}$ or $\{1\}$ and has regularly varying tails, as in \cite{Heffernan2004} and \cite{Simpson2020}. Specifically, let 
\begin{align}
h(w)\sim c_1(1-w)^{s_1} \text{ as } w\nearrow 1; ~~~ h(w)\sim c_2w^{s_2} \text{ as } w\searrow 0,
\label{eqn:EVassumption}
\end{align} 
for $c_1,c_2\in \mathbb{R}$ and $s_1,s_2>-1$. In this case the gauge function, as shown in Section~\ref{subsec:-logc_EV} of the Supplementary Material, is
\begin{align}
	g(x_1,x_2) &= \left(2+s_1\mathbbm{1}_{\{x_1\geq x_2\}}+s_2\mathbbm{1}_{\{x_1< x_2\}}\right)\max(x_1,x_2)- \left(1+s_1\mathbbm{1}_{\{x_1\geq x_2\}}+s_2\mathbbm{1}_{\{x_1< x_2\}}\right)\min(x_1,x_2),
\label{eqn:BEVgauge}
\end{align}
$x_1,x_2\geq 0$. For the logistic model with dependence parameter $\alpha\in (0,1)$, we have $s_1=s_2=1/\alpha-2$. Hence the gauge function is
\[
	g(x_1,x_2)=\frac{1}{\alpha}\max(x_1,x_2) + \left(1-\frac{1}{\alpha}\right)\min(x_1,x_2),
\] 
for $x_1,x_2\geq 0$. In this case, the point $(x_1,x_2)=(1,1)$ satisfies $g(x_1,x_2)=1$, and since both variables are at most 1 in the set $G$, we must have $\eta_{\{1,2\}}=1$, which is the only possible value under the known asymptotic dependence of this model. This is demonstrated in case~(v) of Fig.~\ref{fig:bivariateGauge} for $\alpha=0.5$, with $G$ being piecewise linear. In the case where $s_1\neq s_2$ in \eqref{eqn:BEVgauge}, the set $G$ will no longer be symmetric about the line $x_1=x_2$, but will still correspond to two straight lines with intercepts with the axes at $\left((s_1+2)^{-1},0\right)$ and $\left(0,(s_2+2)^{-1}\right)$, and intersection at the point $(1,1)$. This still corresponds to $\eta_{\{1,2\}}=1$.

We finally consider the asymmetric logistic model \citep{Tawn1988} with exponent measure 
\begin{align*}
	V(x,y)= \theta_1/x + \theta_2/y + \left[\left\{(1-\theta_1)/x\right\}^{1/\alpha}+\left\{(1-\theta_2)/y\right\}^{1/\alpha}\right]^\alpha,
\end{align*}
with $x,y>0$, $\alpha\in(0,1]$ and $\theta_1, \theta_2 \in [0,1]$. This model does not satisfy the condition used when calculating the gauge function for bivariate extreme value copulas, that the spectral density places no mass on $\{0\}$ or $\{1\}$, since $H\left(\{0\}\right)=\theta_2$ and $H\left(\{1\}\right)=\theta_1$. However, calculating the gauge function for this model directly, we obtain
\[
	g(x_1,x_2) = \min\left\{(x_1+x_2) ; \frac{1}{\alpha}\max(x_1,x_2) + \left(1-\frac{1}{\alpha}\right)\min(x_1,x_2) \right\},
\]
for $x_1,x_2\geq 0$ and all $\alpha,\theta_1,\theta_2\in(0,1)$. We note that this gauge function does not depend on the values of $\theta_1$ and $\theta_2$, i.e., the mass on the boundaries of $H$. This is demonstrated by case (vi) of Fig.~\ref{fig:bivariateGauge} for $\alpha=0.5$, and we again find that since $g(1,1)=1$, the coefficient of tail dependence has value $\eta_{\{1,2\}}=1$. The bivariate asymmetric logistic copula is essentially a mixture of independence and logistic models of cases~(i)~and~(v); this is reflected in the gauge function, which is a combination of the gauge functions corresponding to the two mixture components. 

\EDIT{The geometric approach for deriving extremal properties from the gauge function does extend to cases where a joint distribution has singular components, i.e., mass on lower dimensional subspaces. \ES{While the density representation is convenient when it exists, more general theory for deriving the limit set is available; see e.g., \cite{Balkemaetal2010}.} Examples with singular components include perfect dependence (case~(iv) of Fig.~\ref{fig:bivariateGauge}), and the copula of the Marshall-Olkin distribution \citep{MarshallOlkin1967}, which arises as the limit of the asymmetric logistic distribution as $\alpha\rightarrow 0$. For this copula, and a bivariate extreme value copula with underlying measure $H$ placing all of its mass at a finite set of atoms, the set $G$ is identical to that of the independence case, but with a line $y=x$ from $(1/2,1/2)$ to $(1,1)$, inclusive. As the set $G$ contains $(1,1)$, i.e., $g(1,1)=1$, we have $\eta_{\{1,2\}}=1$. For a copula based on the inverted bivariate extreme value distribution with underlying measure $H$ placing all its mass at a finite number of atoms, the $\eta_{\{1,2\}}$ value follows from \cite{Ledford1996}, with $\eta_{\{1,2\}}<1$ (unless $H(\{1/2\})=1$), and the asymptotic shape of the sample cloud following from results in \cite{Kereszturi2017}. However, in such cases, extremal properties are much easier to derive directly from the joint distribution function, without studying the gauge function.}

In the bivariate examples we have studied here, the intersection of interest between the sets $[r,\infty)^2$ and $G$ occurred when $x_1$ and $x_2$ were equal, but we note that this is not the case in general. For sets $G$ corresponding to cases~(i)-(v) are all convex, but this is not true of the asymmetric logistic model in case~(vi). This links to another interesting feature of the sets $G$, which is that they can be used to consider the possible values of one variable when the other variable is large. To study the case where $X_1$ takes its largest values, we can consider the intersection of the set $G$ with the line $x_1=1$. For the independence and inverted logistic examples, cases (i)~and~(iii), we see that the intersection occurs at $(1,0)$, so the largest values of $X_1$ occur only with the smallest values of $X_2$, while for the Gaussian case~(ii), the intersection occurs at $(1,\rho^2)$, meaning that larger (although not the most extreme) values of $X_2$ occur when $X_1$ takes its largest values. For all three asymptotically dependent cases, the two variables take their largest values simultaneously, with intersection at $(1,1)$, but for the asymmetric logistic example of case~(vi), the line $x_1=1$ intersects the set $G$ twice, indicating that $X_2$ can take either its smallest \emph{or} largest values with the largest values of $X_1$. \cite{Nolde2020} elaborate further on how the shape of $G$ links to a broader description of extremal dependence than the coefficients $\eta_\mathcal{C}$.

\section{Vine copula modelling}\label{sec:vines}
\subsection{Preliminaries}
As discussed in Section~\ref{sec:intro}, our aim is to apply the methods of Section~\ref{sec:gauge} to investigate some of the extremal dependence properties of vine copulas. \EDIT{These are models for $d>2$ variables, created using $d(d-1)/2$ bivariate copulas according to an underlying graphical structure. We provide a summary of the key ideas here, where our focus is on models for continuous variables.}

By Sklar's theorem \citep{Sklar1959}, the joint distribution function $F$ of variables $\bm{X}=(X_1,\ldots,X_d)$ with $X_i\sim F_i$, for $i=1,\ldots,d$, can be written in terms of a unique copula function $C$ as
\[
	F(x_1,\ldots,x_d) = C\left\{F_1(x_1),\ldots,F_d(x_d)\right\}, ~~~\text{for $x_i\in\mathbb{R},~i=1,\ldots,d$}.
\]
Differentiating this with respect to each variable gives the joint density function as
\begin{align}
	f(x_1,\ldots,x_d) = c\left\{F_1(x_1),\ldots,F_d(x_d)\right\}\prod_{i=1}^d f_i(x_i),
	\label{eqn:copulaDensity}
\end{align}
for $f_{i}(x_i)$, $i=1,\ldots,d$, representing the marginal densities, and copula density
\[
	c(u_1,\ldots,u_d)=\partial^dC(u_1,\ldots,u_d)/\prod_{i=1}^d \partial u_i, ~~~~~\text{with $u_i\in[0,1]$ for $i=1,\ldots,d$}.
\]
As outlined by \cite{Aas2009}, the joint density can be decomposed as
\begin{align}
	f(x_1,\ldots,x_d) = f_d(x_d)&\cdot f_{d-1\mid d}(x_{d-1}\mid x_d)\cdot f_{d-2\mid d-1,d}(x_{d-2}\mid x_{d-1},x_d)\ldots f_{1\mid 2,\ldots,d}(x_1\mid x_2,\ldots,x_d),
	\label{eqn:decomposedDensity}
\end{align}
and by repeatedly applying decomposition (\ref{eqn:copulaDensity}) to each term in the right-hand side of (\ref{eqn:decomposedDensity}), it is possible to write the joint density of the variables $\bm{X}$ in terms of only marginal and bivariate copula densities. For instance, in the bivariate case, $f(x_1,x_2) = f_2(x_2)\cdot f_{1\mid 2}(x_1\mid x_2)$ from (\ref{eqn:decomposedDensity}) and $f(x_1,x_2)=f_1(x_1)\cdot f_2(x_2) \cdot c_{12}\left\{F_1(x_1),F_2(x_2)\right\}$ from (\ref{eqn:copulaDensity}), so that 
\[
	f_{1\mid 2}(x_1\mid x_2) = f_1(x_1)\cdot c_{12}\left\{F_1(x_1),F_2(x_2)\right\}.
\]
Similarly, in the trivariate case,
\begin{align*}
f(x_1,x_2,x_3) &=f_3(x_3)\cdot f_{2\mid 3}(x_2\mid x_3) \cdot f_{1\mid 23}(x_1\mid x_2,x_3)\\
&=f_3(x_3)\cdot f_2(x_2)\cdot c_{23}\left\{F_2(x_2),F_3(x_3)\right\}\cdot f_{1\mid 23}(x_1\mid x_2,x_3).
\end{align*}
Again following \cite{Aas2009}, 
\begin{align*}
	f_{1\mid 23}(x_1\mid x_2,x_3) &= c_{13\mid 2}\left\{F_{1\mid 2}(x_1\mid x_2),F_{3\mid 2}(x_3\mid x_2)\right\}\cdot f_{1\mid 2}(x_1\mid x_2)\\
	&=c_{13\mid 2}\left\{F_{1\mid 2}(x_1\mid x_2),F_{3\mid 2}(x_3\mid x_2)\right\}\cdot c_{12}\left\{F_1(x_1),F_2(x_2)\right\}\cdot f_1(x_1),
\end{align*}
so that a full decomposition of $f(x_1,x_2,x_3)$ is given by
\begin{align}
f(x_1,x_2,x_3) = &f_1(x_1) \cdot f_2(x_2) \cdot f_3(x_3) \nonumber\\
			&\cdot c_{12}\left\{F_1(x_1),F_2(x_2)\right\}\cdot c_{23}\left\{F_2(x_2),F_3(x_3)\right\}\nonumber \\
			&\cdot c_{13|2}\left\{F_{1|2}(x_1|x_2),F_{3|2}(x_3|x_2)\right\}.
\label{eqn:trivariateVine}
\end{align}
For modelling purposes, different bivariate copula densities can be chosen for each of $c_{12}$, $c_{23}$ and $c_{13\mid 2}$, and different marginal distributions can be selected for each variable, i.e., $F_1$, $F_2$, and $F_3$,  showing the flexibility in this class of model. A similar process can be applied to obtain models in higher than three dimensions in terms of bivariate copulas.

\EDIT{The decomposition of density $f$ is not unique, as we have a choice about the conditioning variable used in each step of the decomposition. 
\cite{Bedford2001,Bedford2002} proposed an approach to address this issue through the use of regular vines, a class of graphical model, to represent the underlying structure of certain decompositions and help to systematize the different possibilities. Construction~\eqref{eqn:trivariateVine} gives an example of a vine copula form in the trivariate case. An introduction to the graphical representation of vine copulas is given in \cite{Kurowicka2010_Ch3}, with formal definitions provided in \cite{Kurowicka2006}.} We discuss this further in Section~\ref{subsec:vineReps}.

\subsection{Graphical representations of vine copulas}\label{subsec:vineReps}
Suppose we are interested in modelling variables $\bm{X}$. A regular vine, first introduced by \cite{Bedford2002}, corresponding to these $d$ variables consists of $d-1$ connected trees labelled $T_1,\ldots,T_{d-1}$, with tree $T_i$ having $d+1-i$ nodes and $d-i$ edges. The nodes in tree $T_1$ each have a different label in the set $\mathcal{D}$, and the edges are labelled according to the pair of nodes they connect. The labels of the nodes in tree $T_{i+1}$ correspond to the labels of the edges in tree $T_i$, for $i=1,\ldots,d-2$, creating a nested structure among the set of all trees. In tree $T_i$, $i\geq 2$, the pair of nodes connected by each edge will have $i-1$ variable labels in common; these become the conditioning variables in the corresponding edge label of $T_i$. The underlying vine structure for copula~\eqref{eqn:trivariateVine} is shown in Fig.~\ref{fig:triVine}.

\begin{figure}
\begin{center}
\includegraphics[width=0.3\textwidth]{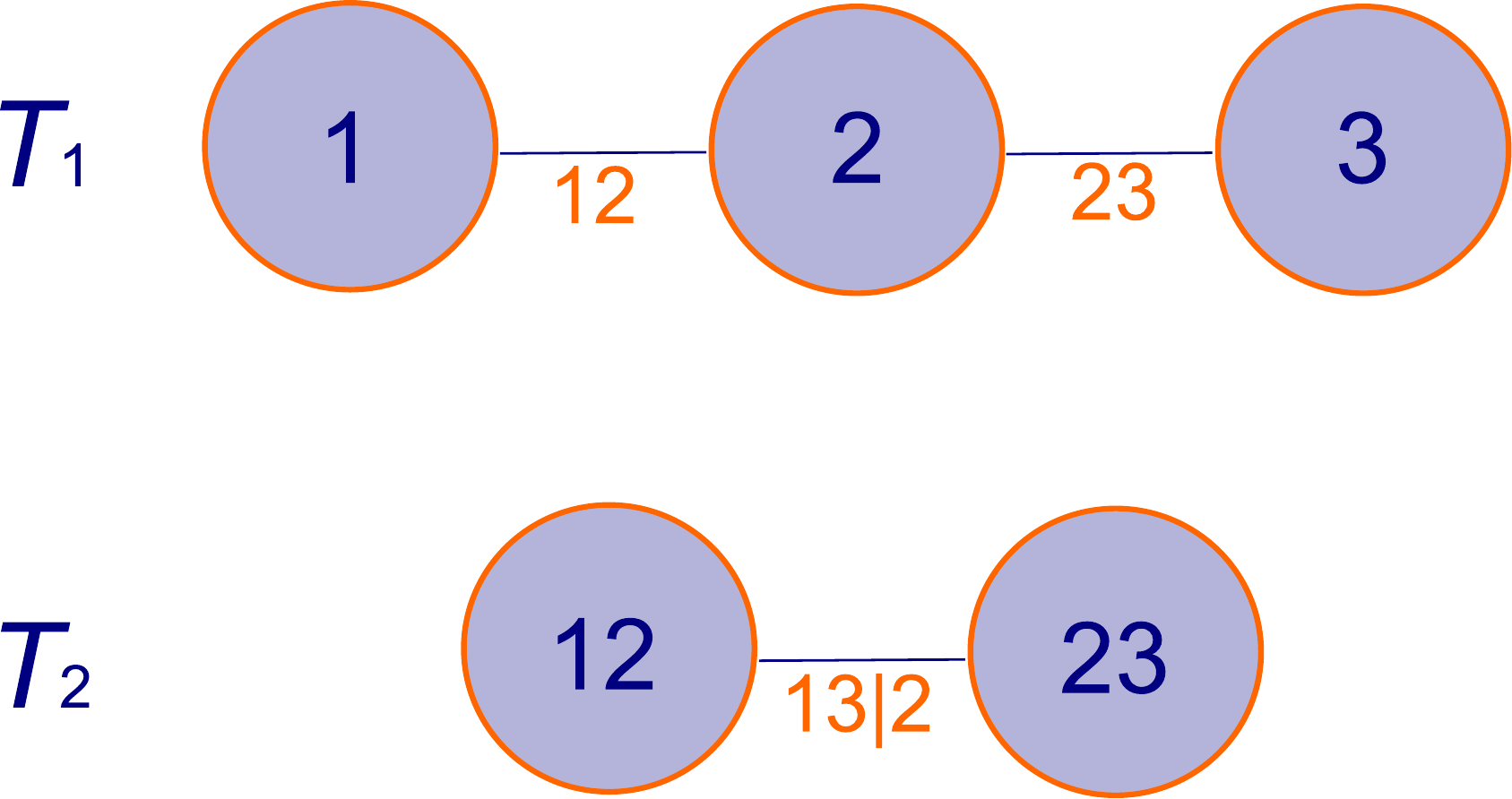}
\caption{Trivariate vine structure.}
\label{fig:triVine}
\end{center}
\end{figure}

\EDIT{Each edge in a regular vine can be used to represent one of the copula densities used in the decomposition of the joint density. There are certain subclasses of vine copula that are often of interest. These include $D$-vines, where each tree is a path, and $C$-vines, where each tree has exactly one node that is connected to all other nodes.} Fig.~\ref{fig:4D_Vines} gives an example of these vine structures for $d=4$. For the $C$-vine example, the corresponding decomposition of the density is
\begin{align*}
	f(x_1,x_2,x_3,x_4) = &f_1(x_1)\cdot f_2(x_2)\cdot f_3(x_3)\cdot f_4(x_4)\\
			&\cdot c_{12}\left\{F_1(x_1),F_2(x_2)\right\}\cdot c_{13}\left\{F_1(x_1),F_3(x_3)\right\}\cdot c_{14}\left\{F_1(x_1),F_4(x_4)\right\}\\
			&\cdot c_{23\mid 1}\left\{F_{2\mid 1}(x_2\mid x_1),F_{3\mid 1}(x_3\mid x_1)\right\}\cdot c_{24\mid 1}\left\{F_{2\mid 1}(x_2\mid x_1),F_{4\mid 1}(x_4\mid x_1)\right\}\\
			&\cdot c_{34\mid 12}\left\{F_{3\mid 12}(x_3\mid x_1,x_2),F_{4\mid 12}(x_4\mid x_1,x_2)\right\},
\end{align*}
with the result for the $D$-vine found in a similar way. More detail on regular vines and the subclasses of $D$-vines and $C$-vines can be found in \cite{Kurowicka2010_Ch3}.

\begin{figure}
	\begin{center}
		\begin{subfigure}{.45\textwidth}
			\vspace{1cm}
			\centering
 			\includegraphics[width=.8\linewidth]{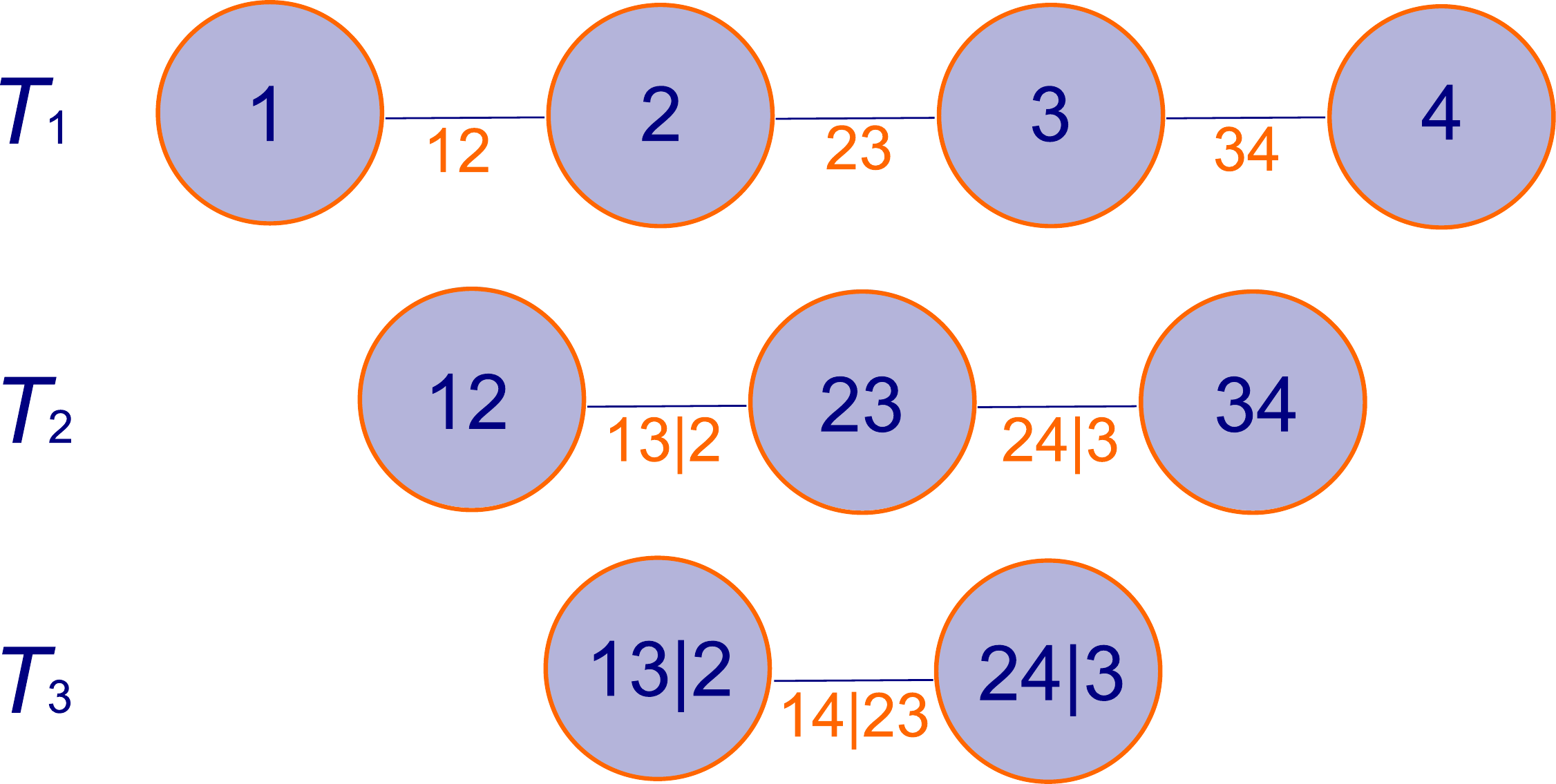}
		\end{subfigure}%
		\begin{subfigure}{.45\textwidth}
			\centering
  			\includegraphics[width=.8\linewidth]{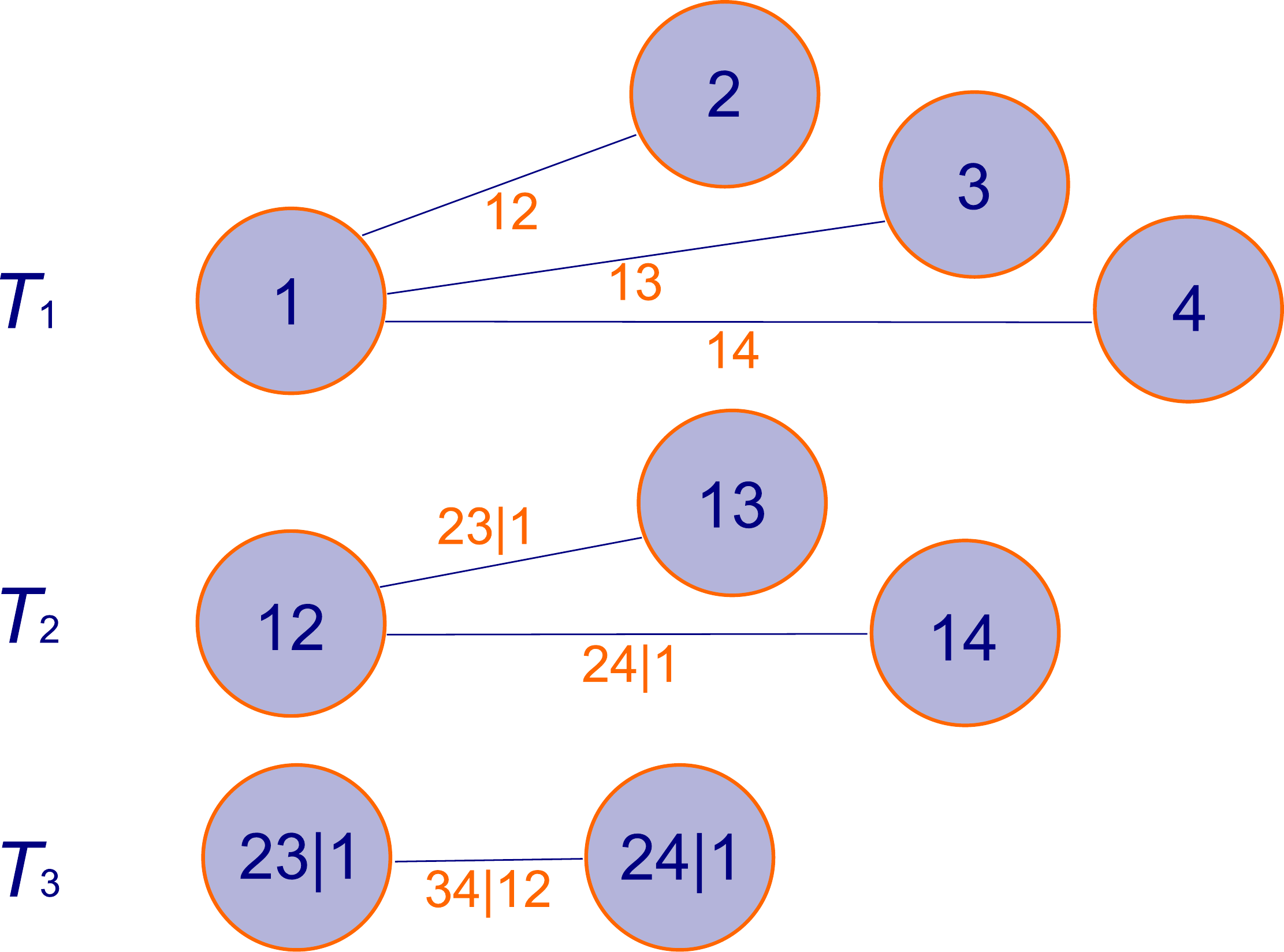}
		\end{subfigure}
	\caption{Four dimensional vine copula models; $D$-vine (left) and $C$-vine (right).}
	\label{fig:4D_Vines}
	\end{center}
\end{figure}

For modelling $d$ variables, there are $d!/2$ possible $D$-vines, and the same number of possible $C$-vines \citep{Aas2009}. For $d=3$, all vine structures are equivalent, with different decompositions only occurring with different labelling of the nodes. For $d=4$, all possible structures fall into either the $D$-vine or $C$-vine category. For $d\geq 5$, the more general regular vines provide a greater range of possible structures, with structure selection considered by \cite{Dissmann2013,Zhu2020}, for example,  but we only study $D$-vines and $C$-vines here.

\section{Vine copulas with inverted extreme value copula components}\label{sec:vinesIEV}
\subsection{Trivariate gauge calculation}\label{subsec:trivariateIEV}
We now turn our attention to applying the methods discussed in Section~\ref{sec:gauge} to calculate the coefficient of tail dependence for various vine copulas, initially focusing on cases where all bivariate copulas used in the construction belong to the family of inverted extreme value models. Our first vine copula gauge function calculation is for a trivariate vine, with graphical structure as in Fig.~\ref{fig:triVine} and density~\eqref{eqn:trivariateVine}, constructed from three inverted extreme value pair copulas.

A bivariate inverted extreme value copula with exponent measure $V$ has the form
\[
	C(u,v) = u + v - 1 + \exp\left[-V\left\{\frac{-1}{\ln(1-u)},\frac{-1}{\ln(1-v)}\right\}\right],~~~u,v\in[0,1].
\]
Let $V_1$, $V_2$ and $V_{12}$ denote the derivative of the exponent measure with respect to the first, second, and both components, respectively. Differentiating $C(u,v)$ with respect to the second component gives the conditional distribution function
\begin{align}
	F(u\mid v) = 1 + &\left(\frac{1}{1-v}\right)\left\{-\ln(1-v)\right\}^{-2}V_2\left\{\frac{-1}{\ln(1-u)},\frac{-1}{\ln(1-v)}\right\}\cdot\exp\left[-V\left\{\frac{-1}{\ln(1-u)},\frac{-1}{\ln(1-v)}\right\}\right],
\label{eqn:conditional}
\end{align}
for $u,v\in[0,1]$, and subsequently differentiating with respect to the first component gives the copula density
\begin{align}
	c(u,v) = &\left(\frac{1}{1-u}\right)\left(\frac{1}{1-v}\right)\left\{-\ln(1-u)\right\}^{-2}\left\{-\ln(1-v)\right\}^{-2}\cdot\exp\left[-V\left\{\frac{-1}{\ln(1-u)},\frac{-1}{\ln(1-v)}\right\}\right]\nonumber\\
	&\cdot\bigg[V_1\left\{\frac{-1}{\ln(1-u)},\frac{-1}{\ln(1-v)}\right\}V_2\left\{\frac{-1}{\ln(1-u)},\frac{-1}{\ln(1-v)}\right\}-V_{12}\left\{\frac{-1}{\ln(1-u)},\frac{-1}{\ln(1-v)}\right\}\bigg].
\label{eqn:IEVcopuladensity}
\end{align}
In calculating values of $\eta$ for a trivariate vine with density~\eqref{eqn:trivariateVine}, we are interested in the behaviour of
\begin{align}
-\ln f(t\bm{x})= &-\ln f_1(tx_1) -\ln f_2(tx_2) -\ln f_3(tx_3)\nonumber\\
				  &-\ln c_{12}\left\{F_1(tx_1),F_2(tx_2)\right\} -\ln c_{23}\left\{F_2(tx_2),F_3(tx_3)\right\}\nonumber\\
 				  &-\ln c_{13|2}\left\{F_{1|2}(tx_1|tx_2),F_{3|2}(tx_3|tx_2)\right\},
\label{eqn:negLogTriVine}
\end{align}
for $x_1,x_2,x_3\geq 0$, as $t\rightarrow\infty$. In Section~\ref{SMsec:triIEVproof} of the Supplementary Material, we show that the gauge function of a trivariate vine copula with three inverted extreme value components is 
\begin{align}
 g(\bm{x}) = x_2 + V^{\{13\mid 2\}}\left[\left\{V^{\{12\}}\left(x_1^{-1},x_2^{-1}\right)-x_2\right\}^{-1},\left\{V^{\{23\}}\left(x_2^{-1},x_3^{-1}\right)-x_2\right\}^{-1} \right],~~~x_1,x_2,x_3\geq 0,
\label{eqn:ievievievgauge}
\end{align}
where the superscripts of the exponent measures $V^{\{12\}}, V^{\{23\}}$ and $V^{\{13\mid 2\}}$ correspond to the pair copulas used to construct the vine. 

\EDIT{We note that a general exponent measure $V(x,y)$ is non-increasing in $x$ and $y$, so it follows that $V(x^{-1},y^{-1})$ is non-decreasing in $x$ and $y$. From this, we can deduce that~\eqref{eqn:ievievievgauge} is non-decreasing in $x_1$ and $x_3$, so the minimum required to solve equation~\eqref{eqn:NWetaGauge} must occur when $x_1=x_3=1$. For $x_2$, the problem is more subtle. In the following section, we consider an example where all components are taken to be inverted logistic copulas, with the form of their exponent measures given by~\eqref{eqn:Vlogistic}. In this case, we demonstrate that the minimum also occurs at $x_2=1$, and suggest that a similar approach could be taken for other cases.}

\subsubsection{Inverted logistic example}
Let $V^{\{12\}}$, $V^{\{23\}}$ and $V^{\{13\mid 2\}}$ have dependence parameters $\alpha,\beta,\gamma\in(0,1)$, respectively. Then the corresponding gauge function is
\begin{align}
	g(\bm{x}) = x_2 + \left[\left\{\left(x_1^{1/\alpha}+x_2^{1/\alpha}\right)^\alpha - x_2\right\}^{1/\gamma} + \left\{\left(x_2^{1/\beta}+x_3^{1/\beta}\right)^\beta - x_2\right\}^{1/\gamma}\right]^\gamma,~~~x_1,x_2,x_3\geq 0.
	\label{eqn:ilogilogilogGauge}
\end{align} 
Fig.~\ref{fig:ilogilogilogGauge} demonstrates the sets $G=\{\bm{x}\in\mathbb{R}^3:g(\bm{x})=1\}$ for this gauge function, with $\alpha\in\{0.25,0.5,0.75\}$, $\beta=0.25$ and $\gamma=0.5$. As was the case with the bivariate inverted logistic copula, the surface corresponding to the set $G$ is smooth and convex, and considering the intersection of $G$ with the lines $x_i=1$, $i=1,2,3$, shows that each variable takes its largest values while the other two take their smallest.

\begin{figure}
\begin{center}
	\includegraphics[width=\textwidth]{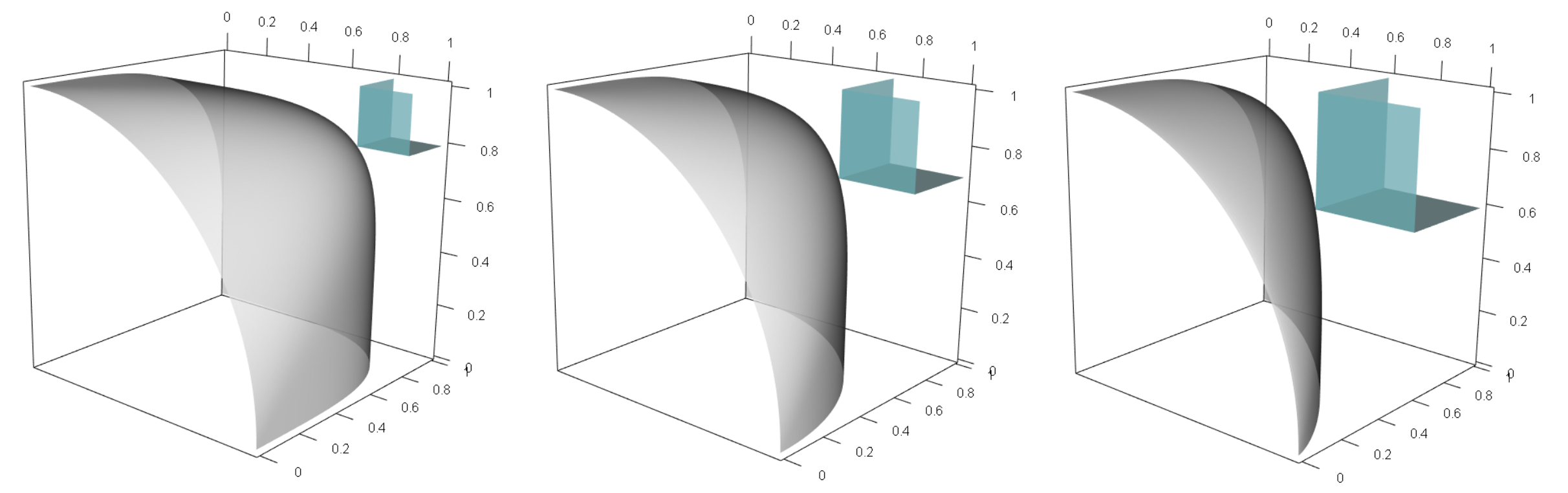}
	\caption{The set $G=\{\bm{x}\in\mathbb{R}^3:g(\bm{x})=1\}$ for a trivariate vine with three inverted logistic pair copula components (grey) and the boundary of the set $[\eta_{\{1,2,3\}},1)^3$ (blue): $\alpha=0.25$ (left), $\alpha=0.5$ (centre), $\alpha=0.75$ (right); $\beta=0.25$ and $\gamma=0.5$.}
	\label{fig:ilogilogilogGauge}
\end{center}
\end{figure}

\EDIT{From Section~\ref{subsec:trivariateIEV}, we already know that the minimum in~\eqref{eqn:NWetaGauge} occurs when $x_1=x_3=1$, since $g(\bm{x})$ is increasing with respect to both these variables. In Section~\ref{SMsec:4.1proofs} of the Supplementary Material, we show that the gauge function is also increasing with respect to $x_2\geq 1$. Hence, we know that the minimum occurs at $\bm{x}=\bm{1}$, i.e., that the intersection of $G$ and $[\eta_{\{1,2,3\}},\infty)^3$ occurs on the diagonal $x_1=x_2=x_3$. This is supported by the plots in Fig.~\ref{fig:ilogilogilogGauge}, and yields}
\begin{align}
	\eta_{\{1,2,3\}}=g(1,1,1)^{-1}= \left[1+\left\{\left(2^\alpha-1\right)^{1/\gamma}+\left(2^\beta-1\right)^{1/\gamma}\right\}^\gamma\right]^{-1}.
\label{eqn:ilogilogilogEta123}
\end{align} As $\min\left(\alpha,\beta,\gamma\right)\rightarrow 1$, $\eta_{\{1,2,3\}}\rightarrow 1/3$, corresponding to complete independence, and as $\max\left(\alpha, \beta, \gamma\right) \rightarrow 0$, $\eta_{\{1,2,3\}}\rightarrow 1$.

\subsubsection{Calculation of $\eta_{\{1,3\}}$}\label{subsec:eta13IEV}
We now consider the coefficient of tail dependence for the variables $X_1$ and $X_3$, i.e., the pair that is not directly linked in tree $T_1$ of the underlying vine. The joint density of $X_1$ and $X_3$ cannot be found analytically for a trivariate vine with inverted logistic pair copula components; we therefore use the method discussed in Section~\ref{subsec:lowerDimEta}, with the gauge function for this pair of variables being $g_{\{1,3\}}(x_1,x_3)=\min_{x_2\geq 0}g(\bm{x})$, for $g(\bm{x})$ in \eqref{eqn:ilogilogilogGauge}. \EDIT{To demonstrate the boundary of the scaled sample cloud, we carry out this minimisation numerically. In the left panel of Fig.~\ref{fig:ilogilogilogGauge13}, we plot the set $G_{\{1,3\}}$ for $(\alpha,\beta,\gamma)=(0.5,0.25,0.5)$, chosen to match the parameter values for the central panel of Fig.~\ref{fig:ilogilogilogGauge}.} 


\begin{figure}[t]
\centering
\begin{subfigure}{.5\textwidth}
  \centering
\includegraphics[width=0.7\textwidth]{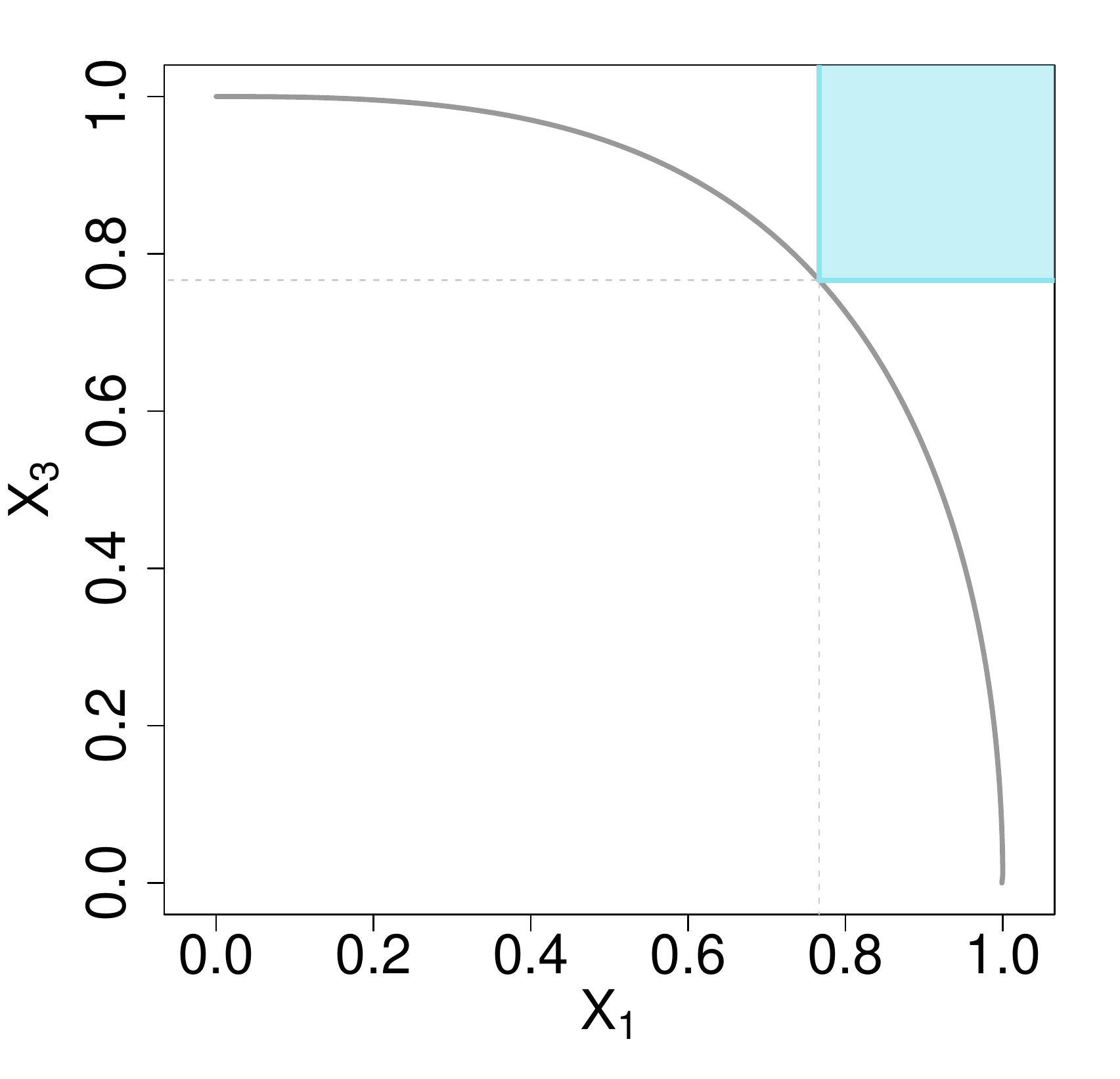}
\end{subfigure}%
\begin{subfigure}{.5\textwidth}
  \centering
  \includegraphics[width=0.7\textwidth]{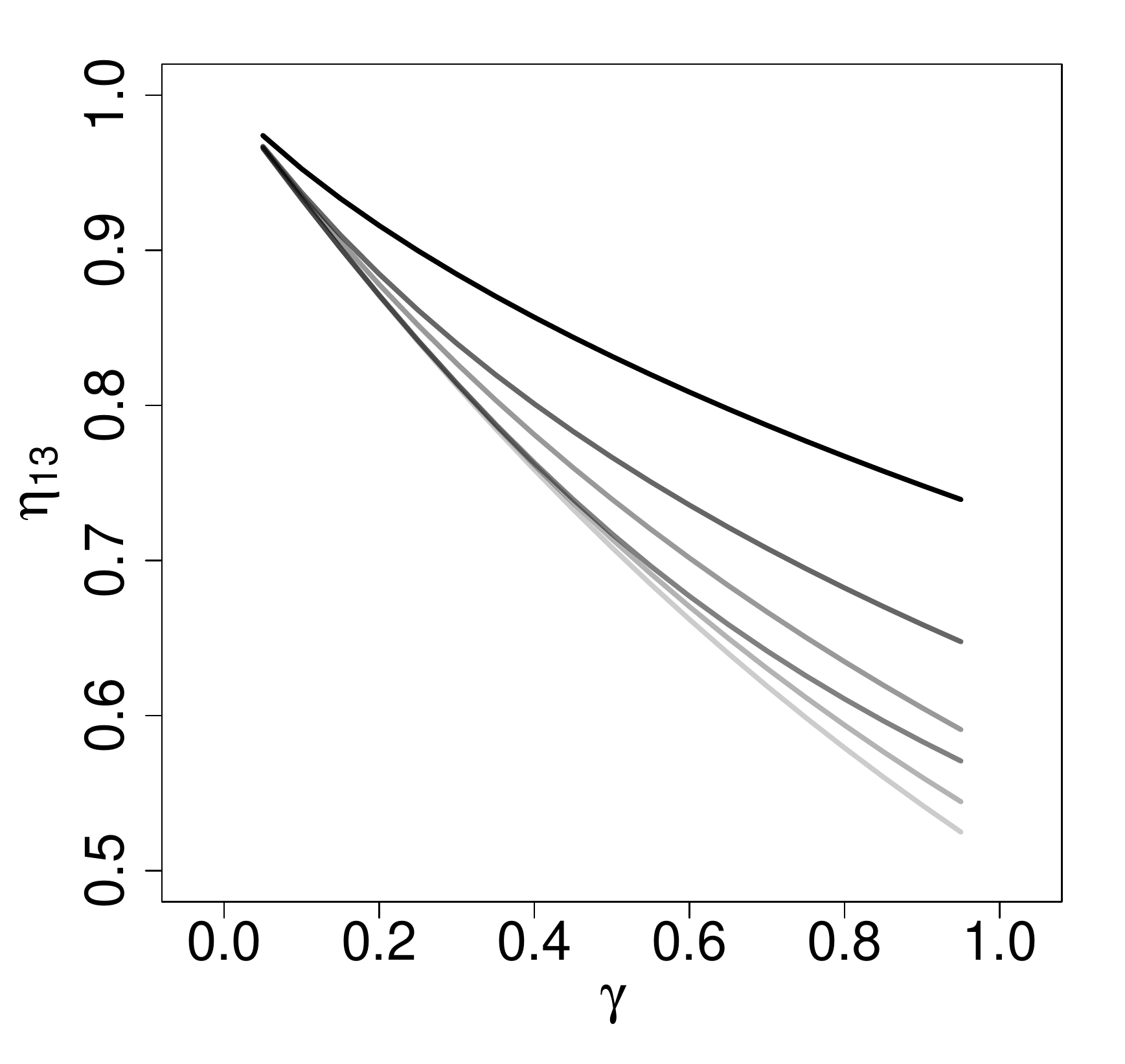}
\end{subfigure}
\caption{Left: for $(\alpha,\beta,\gamma)=(0.5,0.25,0.5)$, the sets $G_{\{1,3\}}$ (grey) and $[\eta_{\{1,3\}},1)^2$ (blue). Right: values of $\eta_{\{1,3\}}$ for $\gamma\in[0.05,0.95]$ and $(\alpha,\beta)\in\{(0.25,0.25)$,$(0.25,0.5)$,$(0.5,0.5)$,$(0.25,0.75)$,$(0.5,0.75)$,$(0.75,0.75)\}$ (top to bottom).}
\label{fig:ilogilogilogGauge13}
\end{figure}

\EDIT{To calculate $\eta_{\{1,3\}}$, we follow the steps in Section~\ref{subsec:lowerDimEta}, where we have $\eta_{\{1,3\}}=\left\{\min_{x_1,x_3\geq 1}g_{\{1,3\}}(x_1,x_3)\right\}^{-1}=\left\{\min_{x_1,x_3\geq 1,x_2\geq 0}g(x_1,x_2,x_3)\right\}^{-1} $. We have already seen that $g(\bm{x})$ is increasing with respect to $x_1$ and $x_3$, so we focus on $x_1=x_3=1$, and have $\eta_{\{1,3\}}=\left\{\min_{v\geq 0}g(1,v,1)\right\}^{-1}$.} That is,
\[
\eta_{\{1,3\}} = \left(v + \left[\left\{\left(1+v^{1/\alpha}\right)^\alpha-v\right\}^{1/\gamma} + \left\{\left(1+v^{1/\beta}\right)^\beta-v\right\}^{1/\gamma} \right]^\gamma\right)^{-1},
\]
with $v$ satisfying $d g(1,v,1)/d v=0$, i.e.,
\begin{align}
	&1+\left[\left\{\left(1+v^{1/\alpha}\right)^\alpha-v\right\}^{1/\gamma} + \left\{\left(1+v^{1/\beta}\right)^\beta-v\right\}^{1/\gamma}\right]^{\gamma-1}\cdot\bigg[ \left\{(1+v^{-1/\alpha})^{\alpha-1}-1\right\} \left\{(1+v^{1/\alpha})^\alpha-v\right\}^{-1+1/\gamma}\nonumber\\
	&\hspace{3cm} + \left\{(1+v^{-1/\beta})^{\beta-1}-1\right\}\left\{(1+v^{1/\beta})^\beta-v\right\}^{-1+1/\gamma}\bigg] = 0.
	\label{eqn:ilogilogilog_tconstraint}
\end{align}
\EDIT{In Section~\ref{SMsec:4.1proofs} of the Supplementary Material, we show that~\eqref{eqn:ilogilogilog_tconstraint} has a unique solution that lies in the range $(0,1)$.} In general, equation~\eqref{eqn:ilogilogilog_tconstraint} has no closed form solution, except in the case where $\alpha=\beta$, which leads to 
\[
v=\left\{(1-2^{-\gamma})^{-1/(1-\alpha)}-1\right\}^{-\alpha}~~;~~\eta_{\{1,3\}}=\frac{\left\{\left(1-2^{-\gamma}\right)^{-1/(1-\alpha)}-1\right\}^\alpha}{1 -2^\gamma + 2^\gamma\left(1-2^{-\gamma}\right)^{-\alpha/(1-\alpha)}},
\] 
but it can be solved numerically when $\alpha\neq\beta$. In the right panel of Fig.~\ref{fig:ilogilogilogGauge13}, we demonstrate the resulting value of $\eta_{\{1,3\}}$ for a variety of $\alpha,\beta$ and $\gamma$ values. \EDIT{We note that $\eta_{\{1,3\}}\in(0.5,1)$, revealing flexibility in the asymptotic independence features this model can capture. In particular, for the $\alpha=\beta$ case, $\eta_{\{1,3\}}=1-\gamma\ln 2 + o(\gamma)\nearrow 1$, as $\gamma\searrow 0$.}

\subsection{Higher dimensional results}\label{subsec:highDimIEV}
We now extend the results of Section~\ref{subsec:trivariateIEV} by considering vine copulas with dimension $d>3$ constructed from inverted extreme value pair copulas, with the aim being to find the gauge function and value of $\eta_\mathcal{D}$ in each case. We focus on copulas with two types of underlying structure: the class of vine copulas known as $D$-vines, where all trees in the vine are paths; and $C$-vines, which have exactly one node that is connected to all other nodes in each tree. These correspond to the two classes demonstrated in Fig.~\ref{fig:4D_Vines} for the case $d=4$. In the final part of this section, we demonstrate the values of $\eta_\mathcal{D}$ calculated using these gauge functions for both classes of model.

\subsubsection{Gauge functions for $D$-vines}\label{subsubsec:DvineGauges}
A $d$-dimensional $D$-vine is made up of $(d-1)$ trees, labelled $T_1,\ldots,T_{d-1}$, and a total of $(d-1)d/2$ edges. We suppose that the pair copula represented by each edge is an inverted extreme value copula, with the superscript on the exponent measure corresponding to the edge-label, as in the trivariate case. For the four-dimensional example in Fig.~\ref{fig:4D_Vines}, we have
\begin{align}
	-&\ln f(t\bm{x}) = -\ln f_1(tx_1) -\ln f_2(tx_2) -\ln f_3(tx_3) -\ln f_4(tx_4)\nonumber\\
				  &~~-\ln c_{12}\left\{F_1(tx_1),F_2(tx_2)\right\} -\ln c_{23}\left\{F_2(tx_2),F_3(tx_3)\right\} -\ln c_{34}\left\{F_3(tx_3),F_4(tx_4)\right\}\nonumber\\
 				  &~~-\ln c_{13|2}\left\{F_{1|2}(tx_1|tx_2),F_{3|2}(tx_3|tx_2)\right\}-\ln c_{24|3}\left\{F_{2|3}(tx_2|tx_3),F_{4|3}(tx_4|tx_3)\right\}\nonumber\\
 				  &~~-\ln c_{14|23}\left\{F_{1|23}(tx_1|tx_2,tx_3),F_{4|23}(tx_4|tx_2,tx_3)\right\}.
\label{eqn:4Dlogexpansion}
\end{align}
We note that several of these terms can be thought of in terms of lower-dimensional vine copulas that are subsets of the four-dimensional vine. In particular, all terms in the trivariate formula~\eqref{eqn:negLogTriVine} for the set of variables $(X_1,X_2,X_3)$ appear in~\eqref{eqn:4Dlogexpansion}. Let $f_{123}$ denote the joint density corresponding to this trivariate case. The density $f_{234}$ corresponding to variables $(X_2,X_3,X_4)$ also comes from a trivariate vine copula equivalent to $f_{123}$ up to a labelling of the variables. The sections of the four-dimensional vine corresponding to these two trivariate subsets are highlighted in Fig.~\ref{fig:4DVineSeparated}, and can be thought of as sub-vines of the overall vine copula.
We note that these two sub-vines overlap in the centre, as they share the variables $(X_2,X_3)$. This suggests that if we try to represent $-\ln f$ for the overall model in terms of $-\ln f_{123}$ and $-\ln f_{234}$, we will count the section corresponding to $-\ln f_{23}$ twice, with $f_{23}$ denoting the joint density of $(X_2,X_3)$. Taking this inclusion-exclusion into account, equation~\eqref{eqn:4Dlogexpansion} can be simplified to
\begin{align}
	-\ln f(t\bm{x}) = &-\ln f_{123}(tx_1,tx_2,tx_3) -\ln f_{234}(tx_2,tx_3,tx_4) + \ln f_{23}(tx_2,tx_3)\nonumber\\
	&-\ln c_{14|23}\left\{F_{1|23}(tx_1|tx_2,tx_3),F_{4|23}(tx_4|tx_2,tx_3)\right\}.
	\label{eqn:-logf_4d}
\end{align}

\begin{figure}[!htbp]
\begin{center}
\includegraphics[width=0.5\textwidth]{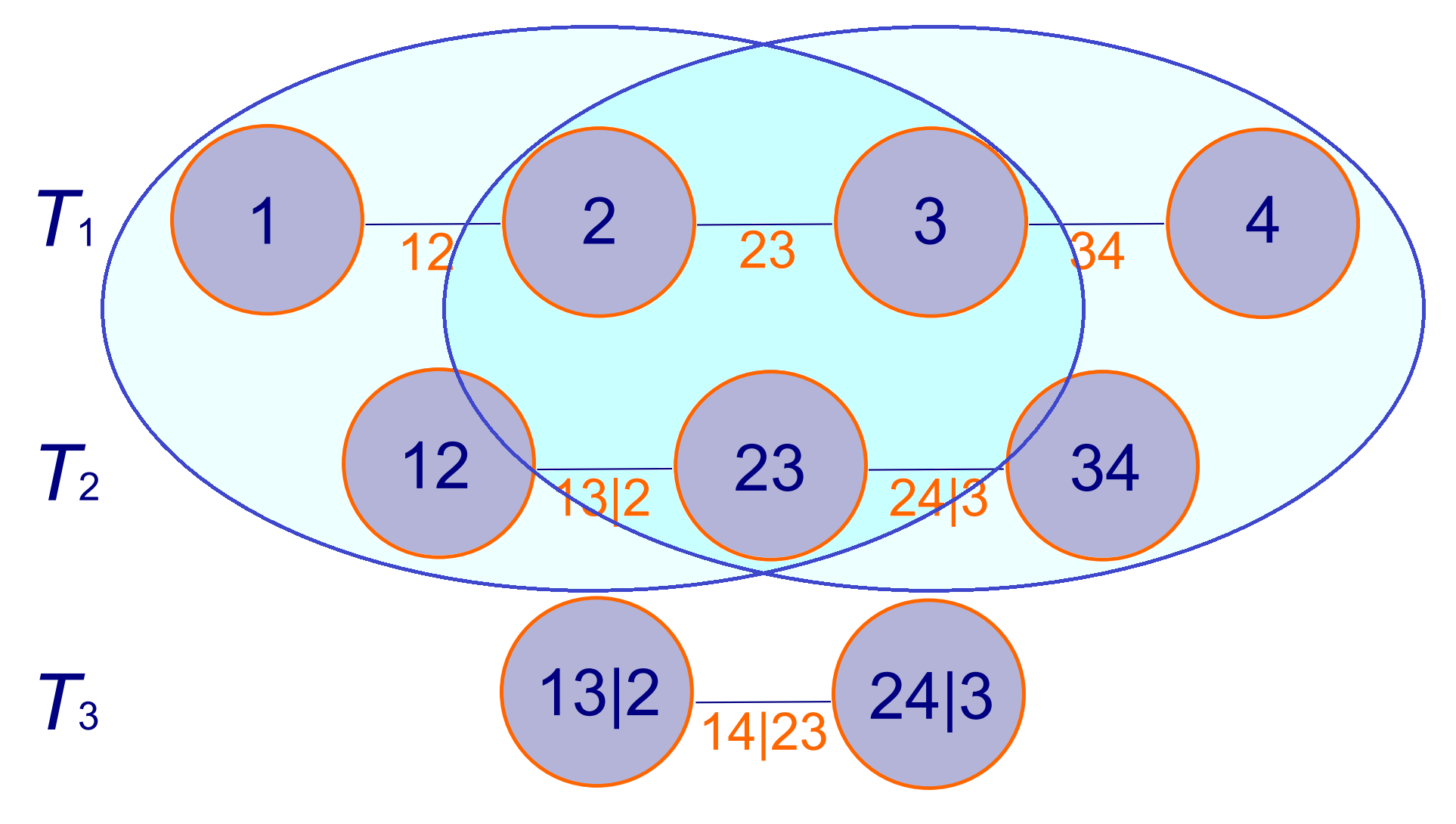}
\caption{Trivariate subsets of the four-dimensional $D$-vine.}
\label{fig:4DVineSeparated}
\end{center}
\end{figure}

In Section~\ref{SMsec:4Dvinegauge} of the Supplementary Material, we show that the gauge function can be written in terms of the gauge functions of the three sub-vines highlighted in Fig.~\ref{fig:4DVineSeparated} and the exponent measure corresponding to the pair copula in tree $T_3$. In particular,
\begin{align}
g(\bm{x}) &= g_{\{2,3\}}(x_2,x_3)+ V^{\{14|23\}}\left\{\frac{1}{g_{\{1,2,3\}}(x_1,x_2,x_3) - g_{\{2,3\}}(x_2,x_3)},\frac{1}{g_{\{2,3,4\}}(x_2,x_3,x_4) - g_{\{2,3\}}(x_2,x_3)}\right\},
\label{eqn:4-dim_D-vine_gauge}
\end{align}
for $x_1,x_2,x_3\geq 0$. For $D$-vine copulas, this same structure can be extended to higher dimensions, creating an iterative formula for calculating the gauge function; this is stated in Theorem~\ref{thm:d-dim_D-vine_gauge}.

\begin{thm}\label{thm:d-dim_D-vine_gauge}
The gauge function for a $d$-dimensional $D$-vine with inverted extreme value pair copula components is given by
\begin{align*}
	g&(\bm{x}) = g_{\mathcal{D}\backslash\{1,d\}}(\bm{x}_{-\{1,d\}})+V^{\{1,d\mid \mathcal{D}\backslash\{1,d\}\}}\left\{\frac{1}{g_{\mathcal{D}\backslash\{d\}}(\bm{x}_{-\{d\}})-g_{\mathcal{D}\backslash\{1,d\}}(\bm{x}_{-\{1,d\}})},\frac{1}{g_{\mathcal{D}\backslash\{1\}}(\bm{x}_{-\{1\}})-g_{\mathcal{D}\backslash\{1,d\}}(\bm{x}_{-\{1,d\}})}\right\},
\end{align*}
for $x_i\geq 0$, $i=1,\ldots,d$.
\end{thm}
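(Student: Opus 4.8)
The plan is to prove the formula by induction on $d$, using the inclusion-exclusion decomposition of the $D$-vine that was illustrated for $d=4$ in equation~\eqref{eqn:4-dim_D-vine_gauge}. The base case $d=3$ is exactly the trivariate gauge function~\eqref{eqn:ievievievgauge}, noting that $\mathcal{D}\backslash\{1,d\}=\{2\}$, so $g_{\{2\}}(x_2)=x_2$, and the superscripts $\{1,2\}$, $\{2,3\}$, $\{1,3\mid 2\}$ match those in~\eqref{eqn:ievievievgauge}. For the inductive step, I would first establish the correct analogue of the inclusion-exclusion identity~\eqref{eqn:-logf_4d} at dimension $d$: in a $D$-vine on $\{1,\dots,d\}$, every pair copula of the full vine other than the single top-tree copula $c_{1,d\mid 2,\dots,d-1}$ belongs either to the sub-$D$-vine on $\mathcal{D}\backslash\{d\}=\{1,\dots,d-1\}$ or to the sub-$D$-vine on $\mathcal{D}\backslash\{1\}=\{2,\dots,d\}$, and the copulas common to both are precisely those of the sub-$D$-vine on $\mathcal{D}\backslash\{1,d\}=\{2,\dots,d-1\}$. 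This gives
\begin{align*}
-\ln f(t\bm{x}) = &-\ln f_{\mathcal{D}\backslash\{d\}}(t\bm{x}_{-\{d\}}) - \ln f_{\mathcal{D}\backslash\{1\}}(t\bm{x}_{-\{1\}}) + \ln f_{\mathcal{D}\backslash\{1,d\}}(t\bm{x}_{-\{1,d\}})\\
&- \ln c_{1,d\mid 2,\dots,d-1}\left\{F_{1\mid 2,\dots,d-1}(tx_1\mid t\bm{x}_{-\{1,d\}}),\, F_{d\mid 2,\dots,d-1}(tx_d\mid t\bm{x}_{-\{1,d\}})\right\},
\end{align*}
which is the structural heart of the argument and generalises the passage from~\eqref{eqn:4Dlogexpansion} to~\eqref{eqn:-logf_4d}.

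Next I would analyse the asymptotics of each term as $t\to\infty$. By the induction hypothesis the first three terms contribute $t\,g_{\mathcal{D}\backslash\{d\}}(\bm{x}_{-\{d\}}) + t\,g_{\mathcal{D}\backslash\{1\}}(\bm{x}_{-\{1\}}) - t\,g_{\mathcal{D}\backslash\{1,d\}}(\bm{x}_{-\{1,d\}})$ up to $o(t)$ (the lower-order corrections are $O(\ln t)$, exactly as in~\eqref{eqn:bivariateIEV}). The remaining work is to show that
\[
-\ln c_{1,d\mid 2,\dots,d-1}\{\cdots\} \sim t\,\Big[\,V^{\{1,d\mid \mathcal{D}\backslash\{1,d\}\}}\big(A_t^{-1},B_t^{-1}\big) - g_{\mathcal{D}\backslash\{1\}}(\bm{x}_{-\{1\}}) - g_{\mathcal{D}\backslash\{d\}}(\bm{x}_{-\{d\}}) + 2\,g_{\mathcal{D}\backslash\{1,d\}}(\bm{x}_{-\{1,d\}})\,\Big]
\]
where, writing out the inverted-extreme-value copula density~\eqref{eqn:IEVcopuladensity} with the conditional CDF~\eqref{eqn:conditional}, the arguments of $V^{\{1,d\mid\cdots\}}$ are governed by $-\ln\{1-F_{1\mid 2,\dots,d-1}(tx_1\mid\cdot)\}$ and $-\ln\{1-F_{d\mid 2,\dots,d-1}(tx_d\mid\cdot)\}$. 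The key sub-lemma, which I would prove by differentiating the full sub-vine density decomposition, is that
\[
-\ln\{1 - F_{1\mid 2,\dots,d-1}(tx_1\mid t\bm{x}_{-\{1,d\}})\} \sim t\left\{g_{\mathcal{D}\backslash\{d\}}(\bm{x}_{-\{d\}}) - g_{\mathcal{D}\backslash\{1,d\}}(\bm{x}_{-\{1,d\}})\right\},
\]
and symmetrically with $1\leftrightarrow d$; this is the analogue of how $\{V^{\{12\}}(x_1^{-1},x_2^{-1})-x_2\}$ appeared inside the outer exponent measure in~\eqref{eqn:ievievievgauge}, and it can be obtained by integrating the conditional density $f_{1\mid 2,\dots,d-1} = f_{\mathcal{D}\backslash\{d\}}/f_{\mathcal{D}\backslash\{1,d\}}$ over the upper tail and a Laplace/Watson-type argument showing the integral is dominated by its endpoint, so that $-\ln(\text{tail})\sim -\ln(\text{integrand at }tx_1)\sim t\{g_{\mathcal{D}\backslash\{d\}}-g_{\mathcal{D}\backslash\{1,d\}}\}$ once the $O(\ln t)$ prefactors are discarded. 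Substituting, using the order-$(-1)$ homogeneity of $V^{\{1,d\mid\cdots\}}$ to pull the factor $t$ out of its arguments, and collecting terms, the $-g_{\mathcal{D}\backslash\{1\}}$, $-g_{\mathcal{D}\backslash\{d\}}$ and $+2g_{\mathcal{D}\backslash\{1,d\}}$ contributions from the conditional-tail asymptotics cancel against the corresponding terms from the first three densities, leaving exactly $g(\bm{x}) = g_{\mathcal{D}\backslash\{1,d\}}(\bm{x}_{-\{1,d\}}) + V^{\{1,d\mid \mathcal{D}\backslash\{1,d\}\}}\{(g_{\mathcal{D}\backslash\{d\}}-g_{\mathcal{D}\backslash\{1,d\}})^{-1}, (g_{\mathcal{D}\backslash\{1\}}-g_{\mathcal{D}\backslash\{1,d\}})^{-1}\}$, as claimed.

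The step I expect to be the main obstacle is the conditional-tail sub-lemma, i.e.\ justifying that $-\ln\{1-F_{1\mid 2,\dots,d-1}(tx_1\mid t\bm{x}_{-\{1,d\}})\}\sim t\{g_{\mathcal{D}\backslash\{d\}}(\bm{x}_{-\{d\}})-g_{\mathcal{D}\backslash\{1,d\}}(\bm{x}_{-\{1,d\}})\}$ uniformly enough to be inserted into $V^{\{1,d\mid\cdots\}}$ and the surrounding logarithms. There are two delicate points: first, one must check that the quantities $g_{\mathcal{D}\backslash\{d\}}-g_{\mathcal{D}\backslash\{1,d\}}$ and $g_{\mathcal{D}\backslash\{1\}}-g_{\mathcal{D}\backslash\{1,d\}}$ are strictly positive on the relevant region (they represent the marginal cost of adding $x_1$, resp.\ $x_d$, to the smaller sub-vine) so that the reciprocals and the exponent measure are well-defined, which should follow from the monotonicity of exponent measures noted after~\eqref{eqn:ievievievgauge} together with the explicit forms appearing in the induction hypothesis; and second, the Laplace-type argument controlling the conditional survivor integral requires that the integrand, as a function of the dummy variable, attains its maximum at the relevant endpoint rather than in the interior, which is where one genuinely uses the $D$-vine (path) structure — for a general regular vine the conditioning sets in the top tree would not telescope so cleanly. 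Once this sub-lemma is in hand, the remainder is bookkeeping: careful substitution, homogeneity, and cancellation, all of which mirror the trivariate and four-dimensional computations already carried out in the Supplementary Material.
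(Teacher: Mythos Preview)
Your overall architecture matches the paper's: induction on $d$, the inclusion--exclusion identity
\[
-\ln f(t\bm{x}) = -\ln f_{\mathcal{D}\backslash\{d\}} - \ln f_{\mathcal{D}\backslash\{1\}} + \ln f_{\mathcal{D}\backslash\{1,d\}} - \ln c_{1,d\mid\mathcal{D}\backslash\{1,d\}}\{\cdots\},
\]
and the identification of the conditional-tail asymptotic $-\ln\{1-F_{1\mid\mathcal{D}\backslash\{1,d\}}\}\sim t\{g_{\mathcal{D}\backslash\{d\}}-g_{\mathcal{D}\backslash\{1,d\}}\}$ as the crux. Where you diverge is in how that sub-lemma is obtained.

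You propose to write $1-F_{1\mid\mathcal{D}\backslash\{1,d\}}$ as a tail integral of $f_{\mathcal{D}\backslash\{d\}}/f_{\mathcal{D}\backslash\{1,d\}}$ and apply a Laplace/Watson argument. The paper instead proves the sub-lemma \emph{itself} by induction, simultaneously with the gauge formula: it invokes the vine recursion for conditional CDFs \citep{Joe1996},
\[
F_{1\mid\mathcal{D}\backslash\{1,d\}} = \frac{\partial C_{1,d-1\mid\mathcal{D}\backslash\{1,d-1,d\}}\bigl\{F_{1\mid\mathcal{D}\backslash\{1,d-1,d\}},\,F_{d-1\mid\mathcal{D}\backslash\{1,d-1,d\}}\bigr\}}{\partial F_{d-1\mid\mathcal{D}\backslash\{1,d-1,d\}}},
\]
and then applies an exact asymptotic lemma (Lemma~\ref{lemma:conditional}) for the conditional CDF~\eqref{eqn:conditional} of an inverted extreme value copula when both arguments are of the form $1-a_ie^{-b_it}\{1+o(1)\}$. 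By the inductive hypothesis the inner conditional CDFs are already of this form, and Lemma~\ref{lemma:conditional} together with the inductive gauge formula for $g_{\mathcal{D}\backslash\{d\}}$ delivers the required asymptotic directly; Lemma~\ref{lemma:copulaDensity} then handles the top copula density. No integration is performed anywhere.

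The paper's route buys you exactly what you flagged as the main obstacle: because the inverted extreme value conditional CDF is available in closed form, Lemmas~\ref{lemma:copulaDensity}--\ref{lemma:conditional} give the constants $k_{\cdot\mid\cdot}$ and the $\{1+o(1)\}$ control explicitly, so there is no need to justify uniform endpoint dominance or to track sub-exponential prefactors through an integral. Your Laplace route is in principle viable --- monotonicity of the IEV gauge in $x_1$ does give endpoint dominance --- but making it rigorous would require uniform control of the $o(t)$ corrections in $-\ln f_{\mathcal{D}\backslash\{d\}}(ty_1,t\bm{x}_{-\{1,d\}})$ as a function of the integration variable $y_1$, which you have not supplied. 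Note also that in the paper's argument the $D$-vine (path) structure is used to guarantee that the Joe recursion expresses $F_{1\mid\mathcal{D}\backslash\{1,d\}}$ in terms of conditional CDFs belonging to the sub-vine on $\mathcal{D}\backslash\{d\}$, so that the inductive hypothesis applies --- not, as you conjecture, to ensure endpoint dominance of an integral.
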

\noindent Theorem~\ref{thm:d-dim_D-vine_gauge} is proved in the Appendix. We discuss how to obtain $\eta_\mathcal{D}$ in Section~\ref{subsubsec:etaDlargevines}.

\subsubsection{Gauge functions for $C$-vines}
Using similar arguments as for the $D$-vines in Section~\ref{subsubsec:DvineGauges}, we can construct an iterative formula for the gauge functions of $d$-dimensional $C$-vines. We now consider the sub-vines as corresponding to the sets of variables $\bm{X}_{-d}$ and $\bm{X}_{-(d-1)}$, which overlap at $\bm{X}_{-\{(d-1),d\}}$. This is demonstrated in Fig.~\ref{fig:4CVineSeparated} for the four-dimensional case. Following the same steps as in the previous section, we obtain the gauge function
\begin{align*}
	g(\bm{x}) =& g_{\mathcal{D}\backslash\{(d-1,d\}}(\bm{x}_{-\{d-1,d\}})\\ &+V^{\{d-1,d\mid \mathcal{D}\backslash\{d-1,d\}\}}\bigg\{\frac{1}{g_{\mathcal{D}\backslash\{d\}}(\bm{x}_{-\{d\}})-g_{\mathcal{D}\backslash\{d-1,d\}}(\bm{x}_{-\{d-1,d\}})},\frac{1}{g_{\mathcal{D}\backslash\{d-1\}}(\bm{x}_{-\{d-1\}})-g_{\mathcal{D}\backslash\{d-1,d\}}(\bm{x}_{-\{d-1,d\}})}\bigg\},
\end{align*}
with $x_i\geq 0$, $i=1,\ldots,d$.

\begin{figure}[!htbp]
\begin{center}
\includegraphics[width=0.5\textwidth]{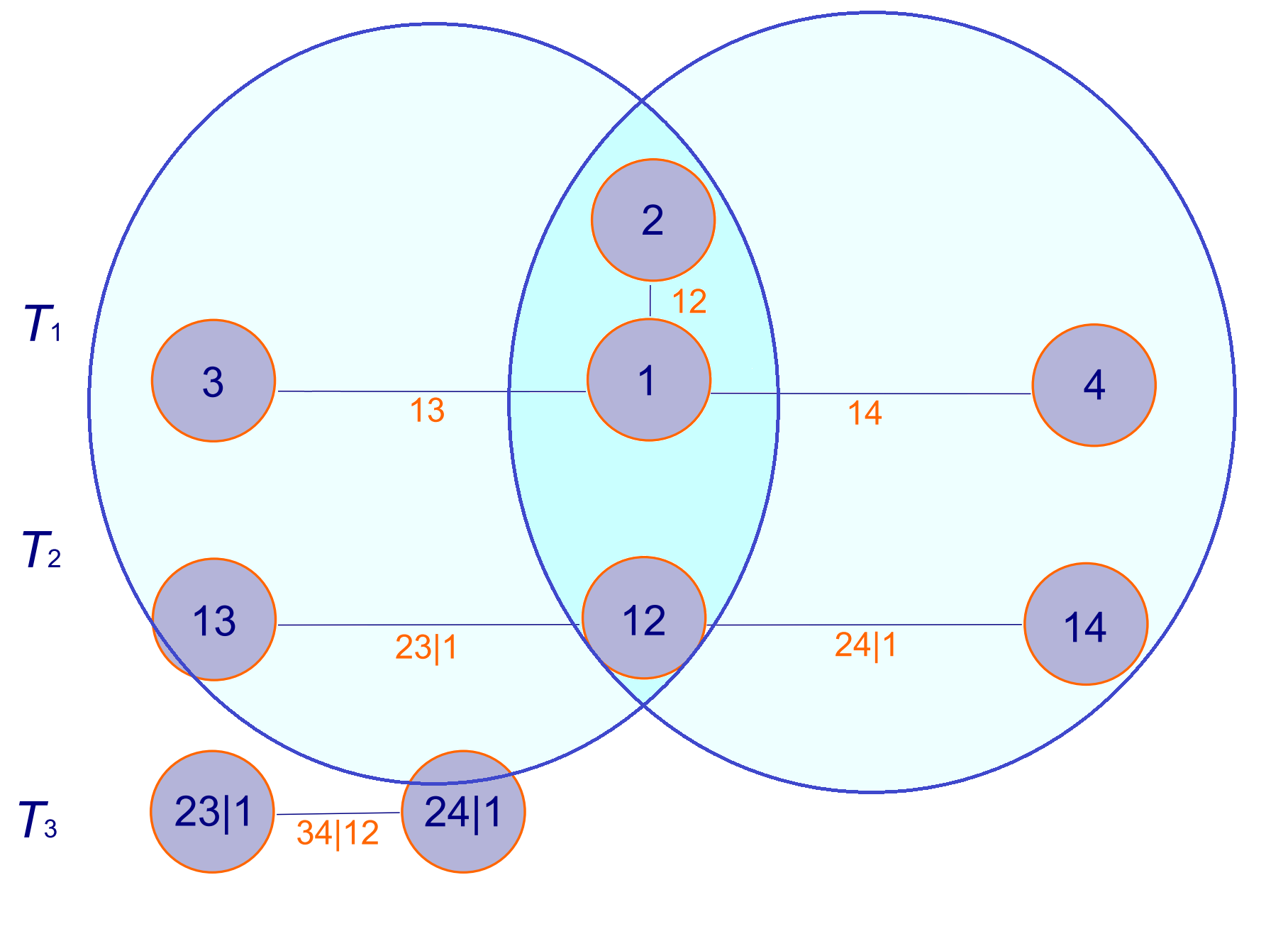}
\caption{Trivariate subsets of the four-dimensional $C$-vine.}
\label{fig:4CVineSeparated}
\end{center}
\end{figure}

\subsubsection{Calculating $\eta_\mathcal{D}$ for $d$-dimensional $D$-vines and $C$-vines with inverted logistic components}\label{subsubsec:etaDlargevines}
As for the trivariate vine copula examples with inverted logistic pair copula components, numerical results suggest that the intersection of the set $G=\{\bm{x}\in\mathbb{R}^d:g(\bm{x})=1\}$ and $[\eta_\mathcal{D},\infty)^d$ for these $D$-vines and $C$-vines occurs when $x_1=\ldots=x_d$. As before, this suggests that $\eta_\mathcal{D}=g(1,\ldots,1)^{-1}$ in this case.

\begin{figure}[ht]
\begin{center}
\includegraphics[width=0.6\textwidth]{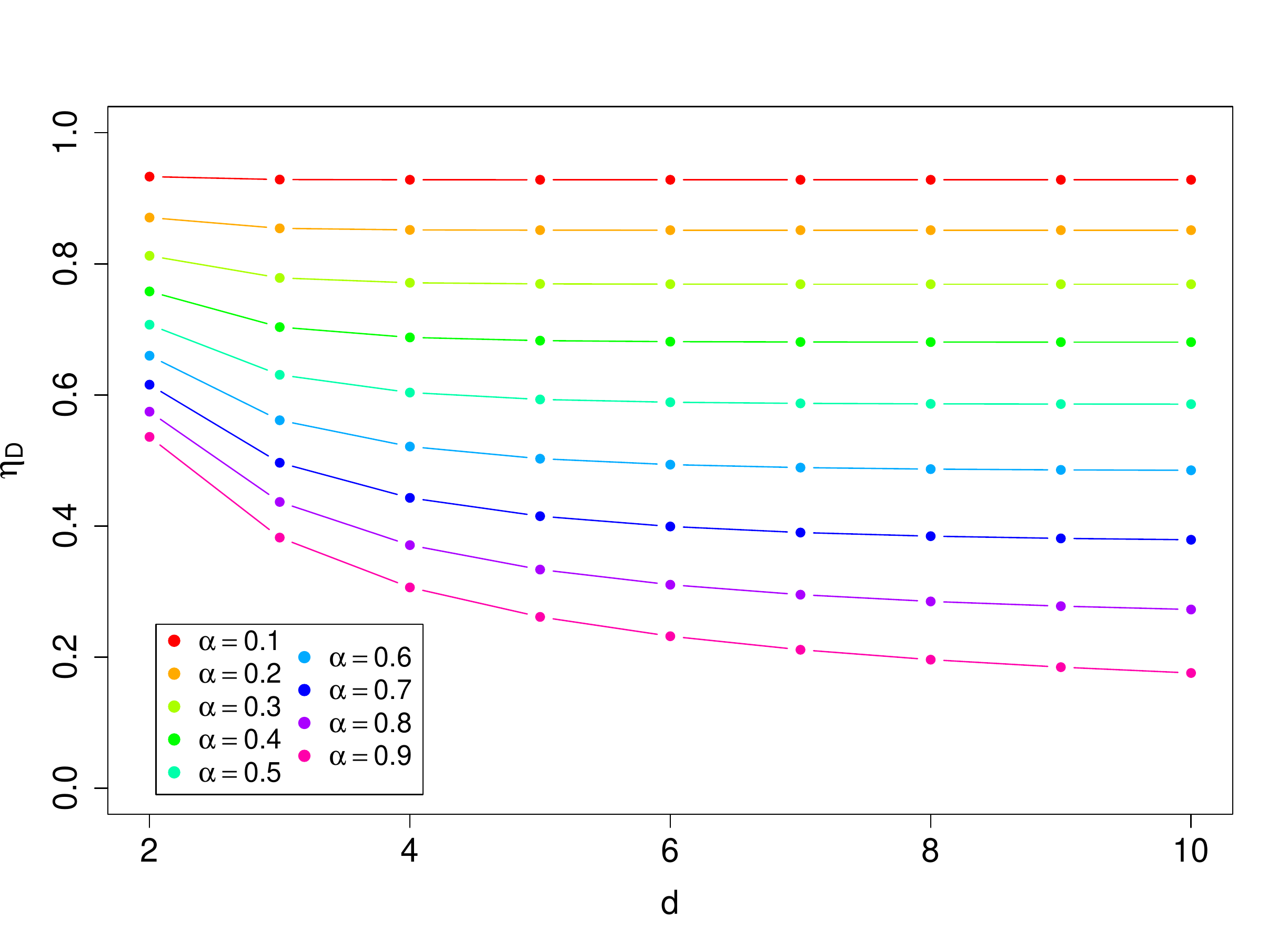}
\caption{Values of $\eta_\mathcal{D}$ for $d\in\{2,\ldots,d\}$ for a $d$-dimensional $D$-vine or $C$-vine constructed from inverted logistic pair copulas all having equal dependence parameter $\alpha\in\{0.1,\ldots,0.9\}$.}
\label{fig:etaDs_multivariateVine}
\end{center}
\end{figure}

Due to the nested structure of the gauge functions, the value of $\eta_\mathcal{D}$ can be written in terms of the values of $\eta_\mathcal{C}$ for various sub-vines of the copula, and the exponent measure corresponding to tree $T_{d-1}$ of the vine. In particular, for $D$-vines, we have
\begin{align}
\eta_\mathcal{D} = \left\{\eta_{\mathcal{D}\backslash\{1,d\}}^{-1} + V^{\{1,d|\mathcal{D}\backslash\{1,d\}\}}\left(\frac{1}{\eta_{\mathcal{D}\backslash\{d\}}^{-1} - \eta_{\mathcal{D}\backslash\{1,d\}}^{-1}},\frac{1}{\eta_{\mathcal{D}\backslash\{1\}}^{-1} - \eta_{\mathcal{D}\backslash\{1,d\}}^{-1}}\right)\right\}^{-1}
\label{eqn:dvineEta}
\end{align}
and for $C$-vines,
\[
\eta_\mathcal{D} = \left\{\eta_{\mathcal{D}\backslash\{d-1,d\}}^{-1} + V^{\{d-1,d|\mathcal{D}\backslash\{d-1,d\}\}}\left(\frac{1}{\eta_{\mathcal{D}\backslash\{d\}}^{-1} - \eta_{\mathcal{D}\backslash\{d-1,d\}}^{-1}},\frac{1}{\eta_{\mathcal{D}\backslash\{(d-1)\}}^{-1} - \eta_{\mathcal{D}\backslash\{d-1,d\}}^{-1}}\right)\right\}^{-1}.
\]
Setting $\eta_\mathcal{C}=1$ for $|\mathcal{C}|=1$, we now have an iterative method for calculating the values of $\eta_\mathcal{D}$ for these classes of model for $d\geq 3$ dimensions.

As an example, we consider the case where all the pair copulas of the vine are inverted logistic with the same dependence parameter $\alpha\in(0,1)$. In this case, the known value of $\eta_{\{1,2\}}$ for the bivariate copula is $2^{-\alpha}$. We can therefore use our iterative formulas to calculate $\eta_\mathcal{D}$ for higher dimensional vine copulas. Since the exponent is homogeneous of order $-1$, the expression for $\eta_\mathcal{D}$ in~\eqref{eqn:dvineEta} in this case simplifies to
\begin{align}
	\eta_\mathcal{D} = \left\{\eta^{-1}_{\mathcal{D}\backslash\{1,d\}} + 2^\alpha\left(\eta^{-1}_{\mathcal{D}\backslash\{d\}}-\eta^{-1}_{\mathcal{D}\backslash\{1,d\}}\right)\right\}^{-1},
\label{eqn:DvineEtaD.1}
\end{align}
and we can use the iterative method to derive the exact value of $\eta_\mathcal{D}$ for any $d$-dimensional $D$-vine copula. For this example, we can extend the results to higher $d$-dimensional vine copulas, yielding, for $d\geq 3$,
\begin{align}
	\eta_\mathcal{D} &=\begin{cases}
       	\left\{1 + 2^\alpha\sum_{k=1}^{(d-1)/2} \left(2^\alpha-1\right)^{2(k-1)+1}\right\}^{-1}, & \text{for $d$ odd},\\
        \left\{ 2^\alpha\sum_{k=1}^{d/2}\left(2^\alpha-1\right)^{2(k-1)} \right\}^{-1},& \text{for $d$ even,}
        \end{cases}~=~\begin{cases}
       	\left[1 + \frac{2^\alpha-1}{2-2^\alpha}\left\{1-(2^\alpha-1)^{d-1}\right\}\right]^{-1}, & \text{for $d$ odd},\\
        \left[\frac{1}{2-2^\alpha}\left\{1-(2^\alpha-1)^d\right\}\right]^{-1},& \text{for $d$ even},
        \label{eqn:DvineEtaD}
	\end{cases}
\end{align}
which can be shown to be decreasing in $d$. We prove result~\eqref{eqn:DvineEtaD} by induction in Section~\ref{app:DvineEtaD_proof} of the Supplementary Material. We note that when the pair copulas, and therefore the corresponding exponent measures, are all taken to be identical, the value of $\eta_\mathcal{D}$ is the same for the $D$-vines and $C$-vines of the same dimension. These values are demonstrated in Fig.~\ref{fig:etaDs_multivariateVine} for $\alpha\in\{0.1,\ldots,0.9\}$ and $d\in\{2,\ldots,10\}$, where we have $\eta_\mathcal{D}<1$ in all cases, corresponding to asymptotic independence. Complete independence in the $d$-dimensional vine copula corresponds to $\eta_\mathcal{D}=1/d$. We see from Fig.~\ref{fig:etaDs_multivariateVine} that for $\alpha=0.9$, we approach this case, while for $\alpha=0.1$, the values of $\eta_\mathcal{D}$ are close to 1, corresponding to strong residual dependence. These models are therefore able to capture a range of sub-asymptotic dependence strengths in the asymptotic independence case.

\section{Trivariate vine copulas with inverted extreme value and extreme value copula components}\label{sec:vinesIEVEV}
\subsection{Overview}
We have so far focused on the tail dependence properties of vine copulas with inverted extreme value pair copula components. Now, we investigate these same properties for trivariate vine copulas where the components are either extreme value or inverted extreme value copulas. We consider five such cases, which along with the results in Section~\ref{subsec:trivariateIEV} cover the range of possible scenarios. In Section~\ref{subsec:gauge2}, the two copulas in tree $T_1$ of Fig.~\ref{fig:triVine} belong to the inverted extreme value class, and there is an extreme value copula in tree $T_2$; tree $T_1$ has one extreme value and one inverted extreme value copula in Sections~\ref{subsec:gauge3}~and~\ref{subsec:gauge4}, with the copula in tree $T_2$ being either inverted extreme value or extreme value, respectively; and in Sections~\ref{subsec:gauge5}~and~\ref{subsec:gauge6}, both copulas in tree $T_1$ are from the extreme value family with the copula in tree $T_2$ being inverted extreme value and extreme value in the respective sections. This section will therefore consist of a series of examples, and the gauge functions resulting from these vine structures generally have a complicated form, with the corresponding sets $G$ exhibiting interesting shapes including non-convexity and non-smoothness. This differs from other well-known examples such as the multivariate Gaussian distribution. The gauge function calculations are provided in Sections~\ref{SM:evProperties}~to~\ref{SMsec:IEVEVgauges} of the Supplementary Material for each of these cases, with the extreme value components satisfying conditions analogous to~\eqref{eqn:EVassumption}. Specifically, let $h^{\{12\}}(w)$, $h^{\{23\}}(w)$, $h^{\{13|2\}}(w)$ denote the spectral density for each pair copula component. We assume that each of these densities has $h^{\{\cdot\}}(w)\sim c_1^{\{\cdot\}}(1-w)^{s_1^{\{\cdot\}}}$ as $w\nearrow 1$ and $h^{\{\cdot\}}(w)\sim c_2^{\{\cdot\}}w^{s_2^{\{\cdot\}}}$ as $w\searrow 0$, for some $c_1^{\{\cdot\}},c_2^{\{\cdot\}}\in\mathbb{R}$ and $s_1^{\{\cdot\}},s_2^{\{\cdot\}}>-1$.

Our results are summarised in Section~\ref{subsec:trivariateGauges}, where we also investigate the corresponding values of $\eta_{\{1,2,3\}}$ and $\eta_{\{1,3\}}$ for logistic and inverted logistic examples, with exponent measure~\eqref{eqn:Vlogistic}. \EDIT{In some subsections, this is achieved by obtaining results for more general gauge functions. In other cases this is not possible, and we focus only on the (inverted) logistic examples, but suggest that similar strategies could be used for other cases. Whether the copula is logistic or inverted logistic, we denote the parameters associated with exponent measures of copulas $c_{12}$, $c_{23}$ and $c_{13|2}$ by $\alpha,\beta,\gamma\in(0,1)$, respectively. We note that in the logistic case, we have $s^{\{12\}}_1=s^{\{12\}}_2=1/\alpha - 2$; $s^{\{23\}}_1=s^{\{23\}}_2=1/\beta - 2$ and $s^{\{13|2\}}_1=s^{\{13|2\}}_2=1/\gamma - 2$.}

\subsection{Gauge functions for trivariate vines with extreme value and inverted extreme value components}\label{subsec:trivariateGauges}
\subsubsection{Inverted extreme value copulas in $T_1$; extreme value copula in $T_2$}\label{subsec:gauge2}
The calculations in the Supplementary Material demonstrate that the gauge function is
\begin{align*}
	g(\bm{x}) = &\left(2+s_{\text{m}}^{\{13\mid 2\}}\right)\max\left\{V^{\{12\}}\left(x_1^{-1},x_2^{-1}\right),V^{\{23\}}\left(x_2^{-1},x_3^{-1}\right)\right\} - \left(1+s_{\text{m}}^{\{13\mid 2\}}\right)\min\left\{V^{\{12\}}\left(x_1^{-1},x_2^{-1}\right),V^{\{23\}}\left(x_2^{-1},x_3^{-1}\right)\right\},
\label{eqn:IEVIEVEV}
\end{align*}
with $\min(x_1,x_2,x_3)\geq 0$, and
\[
s^{\{13\mid 2\}}_{\text{m}} = s^{\{13\mid 2\}}_1\mathbbm{1}_{\left\{V^{\{12\}}\left(x_1^{-1},~x_2^{-1}\right) \geq V^{\{23\}}\left(x_2^{-1},~x_3^{-1}\right)\right\}} + s^{\{13\mid 2\}}_2\mathbbm{1}_{\left\{V^{\{12\}}\left(x_1^{-1},~x_2^{-1}\right) < V^{\{23\}}\left(x_2^{-1},~x_3^{-1}\right)\right\}}~\EDIT{>-1}.
\]
\EDIT{To calculate $\eta_{\{1,2,3\}}$, we must here consider two separate cases. First, we assume that $V^{\{12\}}\left(x_1^{-1},x_2^{-1}\right)\geq V^{\{23\}}\left(x_2^{-1},x_3^{-1}\right)$, so the gauge function simplifies to 
\[
	g(\bm{x}) = \left(2+s_{\text{m}}^{\{13\mid 2\}}\right) V^{\{12\}}\left(x_1^{-1},x_2^{-1}\right) - \left(1+s_{\text{m}}^{\{13\mid 2\}}\right)V^{\{23\}}\left(x_2^{-1},x_3^{-1}\right).
\]
Since $s^{\{13\mid 2\}}_{\text{m}}>-1$, $g(\bm{x})$ is non-decreasing in $x_1$ and we can set $x_1=1$ to find the solution of~\eqref{eqn:NWetaGauge}. We therefore need to minimise
\[
	g(1,x_2,x_3) = \left(2+s_{\text{m}}^{\{13\mid 2\}}\right) V^{\{12\}}\left(1,x_2^{-1}\right) - \left(1+s_{\text{m}}^{\{13\mid 2\}}\right)V^{\{23\}}\left(x_2^{-1},x_3^{-1}\right),
\]
such that $V^{\{12\}}\left(1,x_2^{-1}\right)\geq V^{\{23\}}\left(x_2^{-1},x_3^{-1}\right)$. Now, the function $g(1,x_2,x_3)$ is non-increasing in $x_3$, which should therefore be taken to be as large as possible. Since $V^{\{23\}}\left(x_2^{-1},x_3^{-1}\right)$ is non-decreasing in $x_3$, this function should also be as large as possible, i.e., the largest value of $x_3$ occurs when $V^{\{23\}}\left(x_2^{-1},x_3^{-1}\right)=V^{\{12\}}\left(1,x_2^{-1}\right)$, and the gauge function becomes
\[
	g(1,x_2,x_3) = \left(2+s_{\text{m}}^{\{13\mid 2\}}\right) V^{\{12\}}\left(1,x_2^{-1}\right) - \left(1+s_{\text{m}}^{\{13\mid 2\}}\right)V^{\{12\}}\left(1,x_2^{-1}\right)=V^{\{12\}}\left(1,x_2^{-1}\right).
\]
Again, this is non-decreasing in $x_2$, so the minimum occurs when $x_2=1$, i.e., $\min_{\bm{x}:\min(\bm{x})=1}g(\bm{x})=V^{\{12\}}\left(1,1\right)$.
For the case where $V^{\{12\}}\left(x_1^{-1},x_2^{-1}\right)\leq V^{\{23\}}\left(x_2^{-1},x_3^{-1}\right)$, by a similar argument, $\min_{\bm{x}:\min(\bm{x})=1}g(\bm{x})=V^{\{23\}}\left(1,1\right)$. In summary, we have
\[
\eta_{\{1,2,3\}} = \left\{\min\limits_{\bm{x}:\min(\bm{x})=1}g(\bm{x})\right\}^{-1}=\left[\max\left\{V^{\{12\}}\left(1,1\right),V^{\{23\}}\left(1,1\right)\right\}\right]^{-1}=\min\left[\left\{V^{\{12\}}\left(1,1\right)\right\}^{-1},\left\{V^{\{23\}}\left(1,1\right)\right\}^{-1}\right].
\]
For the value of $\eta_{\{1,3\}}$, we observe that 
\begin{align*}
g(1,0,1)&=\left(2+s_{\text{m}}^{\{13\mid 2\}}\right)\max\left\{V^{\{12\}}\left(1,\infty\right),V^{\{23\}}\left(\infty,1\right)\right\} - \left(1+s_{\text{m}}^{\{13\mid 2\}}\right)\min\left\{V^{\{12\}}\left(1,\infty\right),V^{\{23\}}\left(\infty,1\right)\right\}\\
&=\left(2+s_{\text{m}}^{\{13\mid 2\}}\right)- \left(1+s_{\text{m}}^{\{13\mid 2\}}\right)=1.
\end{align*}
As discussed in Section~\ref{subsec:lowerDimEta}, this is the smallest possible value of $\min_{x_1,x_3\geq 1,x_2\geq 0}g(\bm{x})$, and therefore must be our required minimum. We therefore have $\eta_{\{1,3\}}=1$.}

\EDIT{For an example with logistic and inverted logistic components, the gauge function is
\begin{align*}
g(\bm{x}) = &\left(1/\gamma\right)\max\left\{\left(x_1^{1/\alpha}+x_2^{1/\alpha}\right)^\alpha , \left(x_2^{1/\beta}+x_3^{1/\beta}\right)^\beta\right\}-\left(1/\gamma-1\right)\min\left\{\left(x_1^{1/\alpha}+x_2^{1/\alpha}\right)^\alpha , \left(x_2^{1/\beta}+x_3^{1/\beta}\right)^\beta\right\},
\end{align*}
with $\eta_{\{1,2,3\}}=\min\left(1/2^\alpha,1/2^\beta\right)$ and $\eta_{\{1,3\}}=1$.}

\EDIT{In Fig.~\ref{fig:ilogiloglogGauge}, we demonstrate the set $G=\{\bm{x}\in\mathbb{R}^3:g(\bm{x})=1\}$ for this example, with $\alpha\in\{0.25,0.5,0.75\}$, $\beta=0.25$ and $\gamma=0.5$, where the surface corresponding to the set $G$ turns out to be non-convex. The plots in Fig.~\ref{fig:ilogiloglogGauge} support our analytical calculations. The intersection of $G$ and $[\eta_{\{1,2,3\}},1)^3$ occurs at $x_1=x_2=x_3$ in the first panel with $\alpha=\beta=0.25$, and $x_3\geq x_1=x_2$ in the remaining two panels, where $\alpha>\beta$. The gauge function for the pair of variables $(X_1,X_3)$ is demonstrated in Fig.~\ref{fig:ilogiloglogGauge13}. This plot supports that $\eta_{\{1,3\}}=1$, and we note the non-convex shape of $G_{\{1,3\}}$.}

\begin{figure}[t]
\begin{center}
	\includegraphics[width=\textwidth]{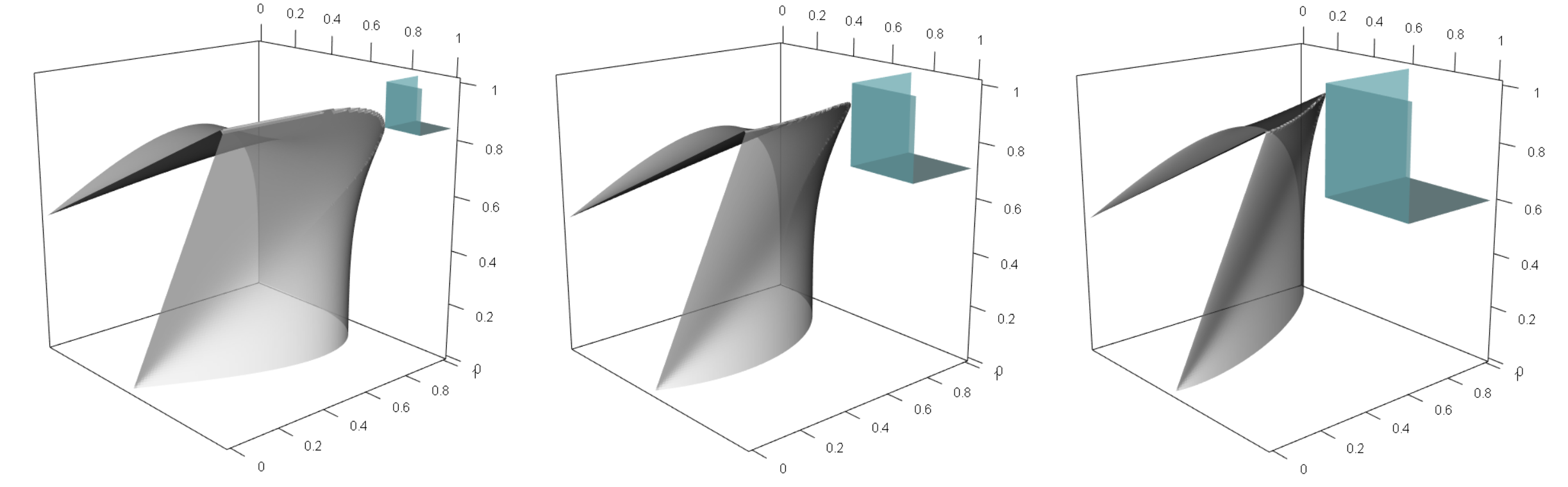}
	\caption{The set $G=\{\bm{x}\in\mathbb{R}^3:g(\bm{x})=1\}$ for a trivariate vine with inverted logistic copulas in $T_1$ and a logistic copula in $T_2$ (grey) and the boundary of the set $[\eta_{\{1,2,3\}},1)^3$ (blue): $\alpha=0.25$ (left), $\alpha=0.5$ (centre), $\alpha=0.75$ (right); $\beta=0.25$ and $\gamma=0.5$.}
	\label{fig:ilogiloglogGauge}
\end{center}
\end{figure}



\begin{figure}[t]
\begin{center}
\includegraphics[width=0.35\textwidth]{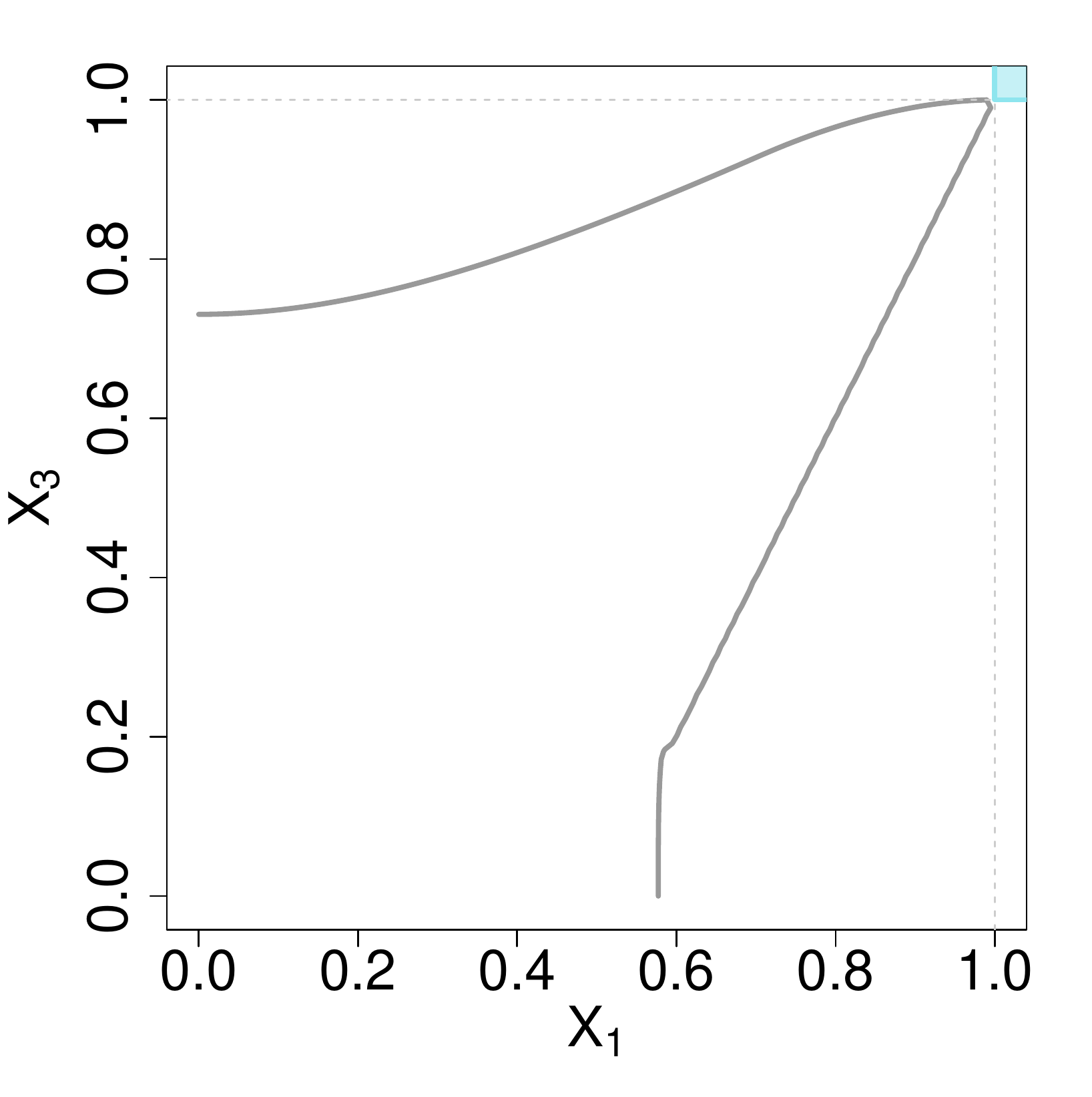}
\caption{The sets $G_{\{1,3\}}$ (grey) and $[\eta_{\{1,3\}},1)^2$ (blue) for $(\alpha,\beta,\gamma)=(0.5,0.25,0.5)$.}
\label{fig:ilogiloglogGauge13}
\end{center}
\end{figure}

\subsubsection{Extreme value and inverted extreme value copulas in $T_1$; inverted extreme value copula in $T_2$}\label{subsec:gauge3}
From the calculations in the Supplementary Material, the gauge function for this model is
\begin{align*}
g(\bm{x})=
	\begin{cases}
       \left(2+s_1^{\{13\mid 2\}}\right)\left(1+s_2^{\{12\}}\right)\left(x_2-x_1\right) + V^{\{23\}}\left(x_2^{-1},x_3^{-1}\right), & 0\leq x_1\leq x_2,\\
        x_2 + V^{\{13\mid 2\}} \left[\left\{\left(x_1-x_2\right)\left(2+s_1^{\{12\}}\right)\right\}^{-1}, \left\{V^{\{23\}}\left(x_2^{-1},x_3^{-1}\right)-x_2\right\}^{-1}\right] ,& 0\leq x_2 <x_1.
	\end{cases}
	\label{eqn:EVIEVIEV}
\end{align*}
\EDIT{To find $\eta_{\{1,2,3\}}$, there are two cases to consider. If $x_1\leq x_2$, it is clear that the function $g(\bm{x})$ is decreasing in $x_1$, which should therefore be taken to be as large as possible by fixing $x_1=x_2$. The gauge function then simplifies to
\[
g(x_2,x_2,x_3)=V^{\{23\}}\left(x_2^{-1},x_3^{-1}\right),
\]
which is non-decreasing in both $x_2$ and $x_3$, so we find the minimum by setting $x_2=x_3=1$, yielding $V^{\{23\}}(1,1)$. Similarly, for $x_2\leq x_1$, the gauge function is non-decreasing in $x_1$, and we can again fix $x_1=x_2$ with the resulting gauge function being
\[
g(x_2,x_2,x_3) = x_2 + V^{\{13\mid 2\}} \left[0^{-1}, \left\{V^{\{23\}}\left(x_2^{-1},x_3^{-1}\right)-x_2\right\}^{-1}\right] = V^{\{23\}}\left(x_2^{-1},x_3^{-1}\right).
\]
Again, this is non-decreasing in $x_2$ and $x_3$, so the minimum is given by $V^{\{23\}}\left(1,1\right)$. Hence, 
\[\eta_{\{1,2,3\}}=g(1,1,1)^{-1}=\left\{V^{\{23\}}\left(1,1\right)\right\}^{-1}.\]}

For the case with logistic and inverted logistic components, the gauge function is
\begin{align*}
g(\bm{x})=
	\begin{cases}
       \left(1/\gamma\right)\left(1/\alpha-1\right)\left(x_2-x_1\right) + \left(x_2^{1/\beta}+x_3^{1/\beta}\right)^\beta, & 0\leq x_1\leq x_2,\\
        x_2 + \left[\left\{\left(x_1-x_2\right)/\alpha\right\}^{1/\gamma} +  \left\{\left(x_2^{1/\beta}+x_3^{1/\beta}\right)^\beta-x_2\right\}^{1/\gamma}\right]^\gamma
 ,& 0\leq x_2<x_1.
	\end{cases}
\end{align*}
\EDIT{An example of this gauge function is shown in Fig.~\ref{fig:g3}, and we have  $\eta_{\{1,2,3\}}=g(1,1,1)^{-1}=1/2^\beta$.} 

\EDIT{We now consider the bivariate coefficient of tail dependence between $X_1$ and $X_3$. Since we have already shown that $\min_{\bm{x}:\min(\bm{x})\geq 1}g(\bm{x})=g(1,1,1)$, we must have $\min_{\bm{x}:x_1,x_3\geq 1,x_2\geq 0}g(\bm{x})\leq g(1,1,1)$, and can focus only on the case where $x_2\leq x_1$. Here, the gauge function is increasing in $x_1$ and $x_3$, so we fix $x_1=x_3=1$, and should minimise the function
\[
g(1,v,1)=v + \left[\left\{\left(1-v\right)/\alpha\right\}^{1/\gamma} +  \left\{\left(1+v^{1/\beta}\right)^\beta-v\right\}^{1/\gamma}\right]^\gamma,
\]
for $v\geq 0$. We therefore find that $\eta_{\{1,3\}} = g(1,v,1)^{-1}$, with $v$ such that 
{\small \begin{align*}
1 &+ \left[\left\{(1-v)/\alpha\right\}^{1/\gamma} +\left\{\left(1+v^{1/\beta}\right)^\beta-v\right\}^{1/\gamma} \right]^{\gamma-1}\cdot\left[-\alpha^{-1/\gamma}(1-v)^{-1+1/\gamma} + \left\{\left(1+v^{1/\beta}\right)^\beta-v\right\}^{-1+1/\gamma}\left\{\left(1+v^{-1/\beta}\right)^{\beta-1}-1\right\}\right]=0.
\end{align*}}
Following an argument almost identical to the one presented for the calculation of $\eta_{\{1,3\}}$ in Section~\ref{subsec:eta13IEV}, this equation has a unique solution, with $v\in(0,1)$.}

\begin{figure}[t]
\centering
\begin{subfigure}{.375\textwidth}
  \centering
  \includegraphics[width=.8\linewidth]{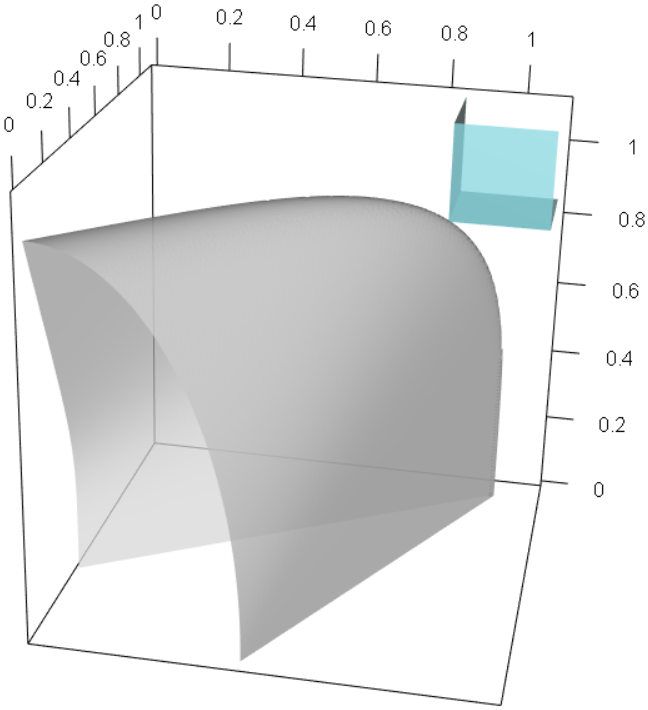}
  \caption{}
  \label{fig:g3}
\end{subfigure}%
\begin{subfigure}{.375\textwidth}
  \centering
  \includegraphics[width=.8\linewidth]{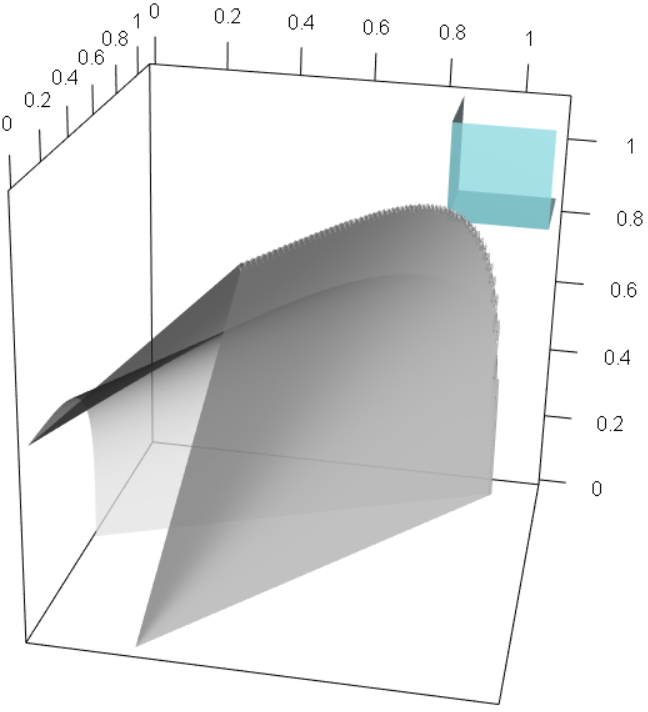}
  \caption{}
  \label{fig:g4}
\end{subfigure}\\
\begin{subfigure}{.375\textwidth}
  \centering
  \includegraphics[width=.8\linewidth]{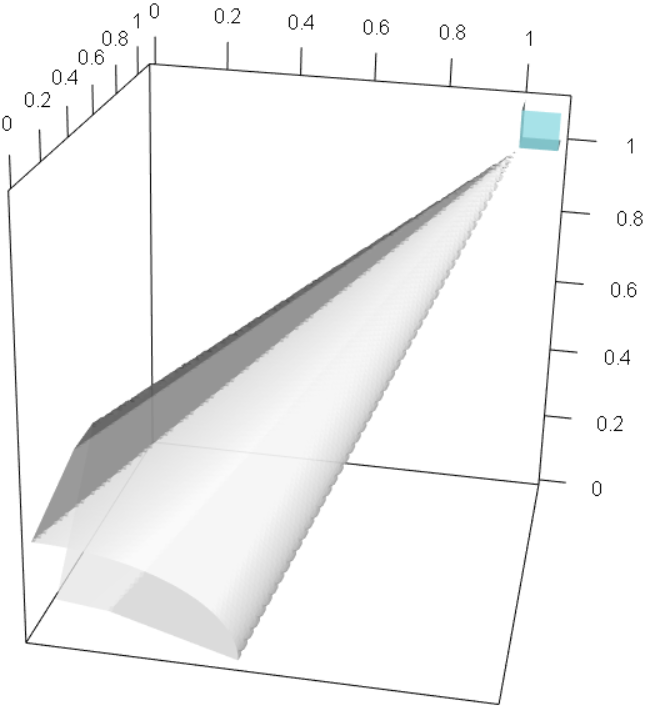}
  \caption{}
  \label{fig:g5}
\end{subfigure}%
\begin{subfigure}{.375\textwidth}
  \centering
  \includegraphics[width=.8\linewidth]{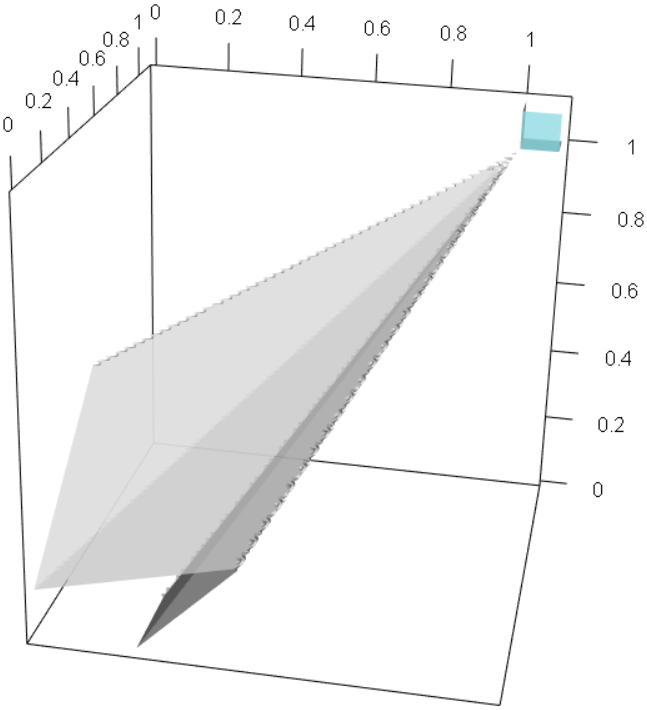}
  \caption{}
  \label{fig:g6}
\end{subfigure}
\caption{The sets $G=\{\bm{x}\in\mathbb{R}^3:g(\bm{x})=1\}$ for the trivariate vine copulas described in Sections~\ref{subsec:gauge3}--\ref{subsec:gauge6} (grey) with the boundary of the set $[\eta_{\{1,2,3\}},1.1)^3$ (blue): $\alpha=0.5, \beta=0.25, \gamma=0.5$.}
\label{fig:otherGauges}
\end{figure}

\subsubsection{Extreme value and inverted extreme value copulas in $T_1$; extreme value copula in $T_2$}\label{subsec:gauge4}
The gauge function for this copula is
\begin{align*}
g(\bm{x})=
	\begin{cases}
       x_2 + \left(1+s_2^{\{12\}}\right)\left(x_2-x_1\right) + \left(2+s_2^{\{13\mid 2\}}\right)\left\{ V^{\{23\}}\left(x_2^{-1},x_3^{-1}\right) -x_2\right\} , & 0\leq x_1\leq x_2,\\
       x_2 + \left( 2+s_{\text{m}}^{\{13\mid 2\}} \right)\max\left\{ \left(2+s_1^{\{12\}}\right)\left(x_1-x_2\right), V^{\{23\}}\left(x_2^{-1},x_3^{-1}\right) -x_2 \right\}\\
       ~~~-\left( 1+s_{\text{m}}^{\{13\mid 2\}} \right)\min\left\{ \left(2+s_1^{\{12\}}\right)\left(x_1-x_2\right), V^{\{23\}}\left(x_2^{-1},x_3^{-1}\right) -x_2 \right\}
 ,& 0\leq x_2<x_1,
	\end{cases}
\end{align*}
with 
\begin{align*}
s^{\{13\mid 2\}}_{\text{m}} &= s^{\{13\mid 2\}}_1\mathbbm{1}_{\left\{\left(2+s_1^{\{12\}}\right)\left(x_1-x_2\right) \geq V^{\{23\}}\left(x_2^{-1},~x_3^{-1}\right) -x_2\right\}}+ s^{\{13\mid 2\}}_2\mathbbm{1}_{\left\{\left(2+s_1^{\{12\}}\right)\left(x_1-x_2\right) < V^{\{23\}}\left(x_2^{-1},~x_3^{-1}\right) -x_2\right\}},
\end{align*}
so that for the case with logistic and inverted logistic components, we have 
\begin{align*}
g(\bm{x})=
	\begin{cases}
       \left(1/\alpha\right)x_2 - \left(1/\alpha-1\right)x_1 + \left(1/\gamma\right)\left\{  \left(x_2^{1/\beta}+x_3^{1/\beta}\right)^\beta -x_2\right\} , & 0\leq x_1\leq x_2,\\
       x_2 + \left(1/\gamma\right)\max\left\{ \left(x_1-x_2\right)/\alpha,  \left(x_2^{1/\beta}+x_3^{1/\beta}\right)^\beta -x_2 \right\}\\
       \hspace{1cm}~~~-\left(1/\gamma-1\right)\min\left\{\left(x_1-x_2\right)/\alpha,  \left(x_2^{1/\beta}+x_3^{1/\beta}\right)^\beta -x_2 \right\}
 ,& 0\leq x_2<x_1.
	\end{cases}
\end{align*}
\EDIT{As in the previous example, we consider the two cases $x_1\leq x_2$ and $x_2\leq x_1$ separately. In the former, the gauge function increases with $x_3$, so that the minimum required to obtain $\eta_{\{1,2,3\}}$ occurs when $x_3=1$. On the other hand, the function is decreasing with respect to $x_1$, which must therefore take its largest possible value, i.e., we can fix $x_1=x_2$. This implies we should focus on minimising
\[
h(v) = g(v,v,1)= v + (1/\gamma)\left\{\left(1+v^{1/\beta}\right)^\beta-v\right\},
\] 
under the constraint that $v\geq 1$. We have
\[
h'(v) = 1 + (1/\gamma)\left\{v^{1/\beta-1}\left(1+v^{1/\beta}\right)^{\beta-1}-1\right\}= 1 + (1/\gamma)\left\{\left(1+v^{-1/\beta}\right)^{\beta-1}-1\right\}.
\]
If we solve the equation $h'(v_0)=0$, we obtain the root $v_0 = \left\{(1-\gamma)^{-1/(1-\beta)}-1\right\}^{-\beta}$, and note that $v_0> 1$ if and only if $\gamma < 1-2^{\beta-1}$. In this case, the minimum value of $h(v)$ is given by
\[
h(v_0) = \left(\frac{1-\gamma}{\gamma}\right)\left\{(1-\gamma)^{-1/(1-\beta)}-1\right\}^{1-\beta}.
\]
On the other hand, if $\gamma \geq 1-2^{\beta-1}$, $h'(v)>0$ for $v\geq 1$, so $h(v)$ is an increasing function of $v$, and the minimum occurs at $v=1$, i.e., $h(1)=1+\left(2^\beta-1\right)/\gamma$. In summary, if $x_1\leq x_2$, we have
\begin{align}
\min\limits_{\bm{x}:\min(\bm{x})\geq 1}g(\bm{x})=\begin{cases}
1+(2^\beta-1)/\gamma , &\text{if }\gamma\geq 1-2^{\beta-1},\\
\left(\frac{1-\gamma}{\gamma}\right)\left\{(1-\gamma)^{-1/(1-\beta)}-1\right\}^{1-\beta}, &\text{if }\gamma< 1-2^{\beta-1}.
\end{cases}
\label{eqn:min.cases}
\end{align}}

\EDIT{For $x_2\leq x_1$, the problem spilts into a further two cases. If $(x_1-x_2)/\alpha\leq \left(x_2^{1/\beta}+x_3^{1/\beta}\right)^\beta-x_2$, the gauge function becomes
\[
g(\bm{x})= x_2 + \left(1/\gamma\right)\left\{\left(x_2^{1/\beta}+x_3^{1/\beta}\right)^\beta -x_2 \right\} -\left(1/\gamma-1\right)(1/\alpha)\left(x_1-x_2\right).
\]
This is an increasing function of $x_3$, but a decreasing function of $x_1$. These two variables should therefore take their smallest and largest possible values, respectively. This occurs with equality at $(x_1-x_2)/\alpha=\left(x_2^{1/\beta}+x_3^{1/\beta}\right)^\beta-x_2$, and an equivalent argument holds for the $(x_1-x_2)/\alpha\geq \left(x_2^{1/\beta}+x_3^{1/\beta}\right)^\beta-x_2$ case. To obtain the required minimum, we can therefore focus on a simplified version of the gauge function, i.e., 
\[
g^*(x_2,x_3)= \left(x_2^{1/\beta}+x_3^{1/\beta}\right)^\beta,
\]
with $x_1=\alpha\left(x_2^{1/\beta}+x_3^{1/\beta}\right)^\beta+(1-\alpha)x_2>x_2$. The function $g^*(x_2,x_3)$ is increasing with respect to both $x_2$ and $x_3$, so we have 
\[
\min\limits_{\bm{x}:\min(\bm{x})=1}g(\bm{x})=g^*(1,1)=g\left[\left\{\alpha2^\beta+1-\alpha\right\},1,1\right]=2^\beta.
\]
We now have two candidates for the required minimum of the full gauge function; either $2^\beta$, or the form given in~\eqref{eqn:min.cases}. For $\gamma\geq 1-2^{\beta-1}$, it is straightforward to see that $1+(2^\beta-1)/\gamma\geq 2^\beta$. For $\gamma< 1-2^{\beta-1}$, we have $1-\gamma>2^{\beta-1}$, $\gamma^{-1}>\left(1-2^{\beta-1}\right)^{-1}$, and $(1-\gamma)^{-1/(1-\beta)}-1>1$, so
\[
\left(\frac{1-\gamma}{\gamma}\right)\left\{(1-\gamma)^{-1/(1-\beta)}-1\right\}^{1-\beta}>\left(\frac{1-\gamma}{\gamma}\right)>\frac{2^{\beta-1}}{1-2^{\beta-1}}>\frac{2^{\beta}}{2-2^{\beta}}>2^\beta.
\]
Hence, we find that the minimum in~\eqref{eqn:NWetaGauge} occurs when $x_1>x_2=x_3=1$, and $\eta_{\{1,2,3\}}=1/2^\beta$. This is supported by the plot in Fig.~\ref{fig:g4}, and suggests that the inverted logistic copula in tree $T_1$ particularly controls the level of asymptotic independence in the overall model.} 

\EDIT{We now consider $\eta_{\{1,3\}}$. Following the example in Section~\ref{subsec:gauge3}, we can focus on the case where $x_2\leq x_1$. Moreover, by a similar argument to the one used in the calculation of $\eta_{\{1,2,3\}}$, we only need to consider the case where $(x_1-x_2)/\alpha=\left(x_2^{1/\beta}+x_3^{1/\beta}\right)^\beta-x_2$, yielding
\[
	g(\bm{x}) = x_1/\alpha + (1-1/\alpha)x_2 = \left(x_2^{1/\beta}+x_3^{1/\beta}\right)^\beta. 
\]
Since these functions are increasing in $x_1$ and $x_3$, respectively, we set $x_1=x_3=1$. Hence, the minimum of the gauge function corresponds to $\min_{v\geq 0}(1+v^{1/\beta})^\beta$, with $v$ such that $(1-v)/\alpha= \left(v^{1/\beta}+1\right)^\beta-v$. We therefore have
\[
	\eta_{\{1,3\}} = \left(1+v^{1/\beta}\right)^{-\beta},~~~\text{with $v$ such that}~(1+v^{1/\beta})^\beta - (1-v)/\alpha - v=0.
\]
We note that if $h(v)=(1+v^{1/\beta})^\beta - (1-v)/\alpha - v$, we have $h'(v)=(1+v^{-1/\beta})^{\beta-1} + 1/\alpha - 1>0$ for $v\geq 0$. Moreover, $h(0)=1-1/\alpha<0$ and $h(1)=2^\beta-1>0$. Hence, $h(v)$ is an increasing function for $v\geq 0$, and the equation $h(v)=0$ has a unique root in the range $(0,1)$. This also implies that $\eta_{\{1,3\}} > 2^\beta=\eta_{\{1,2,3\}}$.}

\subsubsection{Extreme value copulas in $T_1$; inverted extreme value copula in $T_2$}\label{subsec:gauge5}
The gauge function in this case is
\begin{align*}
g(\bm{x})=
	\begin{cases}
       x_2 + \left( 2+s_{\text{m}}^{\{13\mid 2\}} \right)\max\left\{ \left(1+s_2^{\{12\}}\right)\left(x_2-x_1\right), \left(1+s_1^{\{23\}}\right)\left(x_2-x_3\right) \right\}\\
       ~~~-\left( 1+s_{\text{m}}^{\{13\mid 2\}} \right)\min\left\{ \left(1+s_2^{\{12\}}\right)\left(x_2-x_1\right), \left(1+s_1^{\{23\}}\right)\left(x_2-x_3\right) \right\}, &~~ \max(x_1,x_3)< x_2,\\
       x_2 + \left(2 + s_1^{\{13\mid 2\}}\right)\left(1+s_2^{\{12\}}\right)\left(x_2-x_1\right) + \left(2+s_2^{\{23\}}\right)\left(x_3-x_2\right),&~~ x_1 < x_2\leq x_3,\\
       x_2 + \left(2 + s_2^{\{13\mid 2\}}\right)\left(1+s_1^{\{23\}}\right)\left(x_2-x_3\right) + \left(2+s_1^{\{12\}}\right)\left(x_1-x_2\right),&~~ x_3< x_2\leq x_1,\\
       x_2 + V^{\{13\mid 2\}}\left[\left\{\left(2+s_1^{\{12\}}\right)\left(x_1-x_2\right)\right\}^{-1} , \left\{\left(2+s_2^{\{23\}}\right)\left(x_3-x_2\right)\right\}^{-1}\right],&~~ x_2 \leq \min(x_1,x_3),
	\end{cases}
\end{align*}
with $\min(x_1,x_2,x_3)\geq 0$ and
\begin{align*}
s^{\{13\mid 2\}}_{\text{m}} &= s^{\{13\mid 2\}}_1\mathbbm{1}_{\left\{\left(1+s_2^{\{12\}}\right)\left(x_2-x_1\right) \geq \left(1+s_1^{\{23\}}\right)\left(x_2-x_3\right) \right\}}+ s^{\{13\mid 2\}}_2\mathbbm{1}_{\left\{\left(1+s_2^{\{12\}}\right)\left(x_2-x_1\right) < \left(1+s_1^{\{23\}}\right)\left(x_2-x_3\right) \right\}}.
\end{align*}
\EDIT{For this gauge function, we observe that $g(1,1,1)=1$, which implies that $\eta_{\{1,2,3\}}=1$. Moreover, if $\eta_\mathcal{D}=1$, then $\eta_\mathcal{C}=1$ for any set $\mathcal{C}\subset \mathcal{D}$ with $|\mathcal{C}|\geq 2$. As such, we also find that $\eta_{\{1,3\}}=1$ in this case. This result agrees with the findings of \cite{Joeetal2010}, who show that a vine copula will have overall upper tail dependence if each of the copulas in tree $T_1$ also have this property and the copula in tree $T_2$ has support on $(0,1)^2$, as is the case here.}

For the case with logistic and inverted logistic components, 
\begin{align*}
g(\bm{x})=
	\begin{cases}
       x_2 + \left(1/\gamma \right)\max\left\{ \left(1/\alpha-1\right)\left(x_2-x_1\right), \left(1/\beta-1\right)\left(x_2-x_3\right) \right\}\\
       ~~~~~~~~+\left(1-1/\gamma\right)\min\left\{ \left(1/\alpha-1\right)\left(x_2-x_1\right), \left(1/\beta-1\right)\left(x_2-x_3\right) \right\}, ~~~& \max(x_1,x_3)< x_2,\\
       x_2 + \left(1/\gamma\right)\left(1/\alpha-1\right)\left(x_2-x_1\right) + \left(1/\beta\right)\left(x_3-x_2\right),& x_1 < x_2\leq x_3,\\
       x_2 + \left(1/\gamma\right)\left(1/\beta-1\right)\left(x_2-x_3\right) + \left(1/\alpha\right)\left(x_1-x_2\right),& x_3< x_2\leq x_1,\\
       x_2 + \left[\left\{\left(x_1-x_2\right)/\alpha\right\}^{1/\gamma} + \left\{\left(x_3-x_2\right)/\beta\right\}^{1/\gamma}\right]^\gamma,& x_2 \leq \min(x_1,x_3),
	\end{cases}
\end{align*}
with $\min(x_1,x_2,x_3)\geq 0$. \EDIT{This is demonstrated in Fig.~\ref{fig:g5}.}

\subsubsection{Extreme value copulas in $T_1$; extreme value copula in $T_2$}\label{subsec:gauge6}
The gauge function here has the form
\begin{align*}
g(\bm{x})=
	\begin{cases}
       x_2 + V^{\{13\mid 2\}}\left[\left\{\left(1+s_2^{\{12\}}\right)\left(x_2-x_1\right)\right\}^{-1} , \left\{\left(1+s_1^{\{23\}}\right)\left(x_2-x_3\right)\right\}^{-1}\right],&~~ \max(x_1,x_3)\leq x_2,\\
       x_2 + \left( 2+s_2^{\{13\mid 2\}} \right)\left(2+s_2^{\{23\}}\right)\left(x_3-x_2\right) +  \left(1+s_2^{\{12\}}\right)\left(x_2-x_1 \right),&~~ x_1 \leq x_2<x_3,\\
       x_2 + \left( 2+s_1^{\{13\mid 2\}} \right)\left(2+s_1^{\{12\}}\right)\left(x_1-x_2\right) +  \left(1+s_1^{\{23\}}\right)\left(x_2-x_3 \right),&~~ x_3\leq x_2<x_1,\\
       x_2 + \left( 2+s_{\text{m}}^{\{13\mid 2\}} \right)\max\left\{ \left(2+s_1^{\{12\}}\right)\left(x_1-x_2\right), \left(2+s_2^{\{23\}}\right)\left(x_3-x_2\right) \right\}\\
       ~~~-\left( 1+s_{\text{m}}^{\{13\mid 2\}} \right)\min\left\{ \left(2+s_1^{\{12\}}\right)\left(x_1-x_2\right), \left(2+s_2^{\{23\}}\right)\left(x_3-x_2\right) \right\},&~~ x_2 <\min(x_1,x_3),
	\end{cases}
\end{align*}
with $\min(x_1,x_2,x_3)\geq 0$ and
\begin{align*}
s^{\{13\mid 2\}}_{\text{m}} &= s^{\{13\mid 2\}}_1\mathbbm{1}_{\left\{\left(2+s_1^{\{12\}}\right)\left(x_1-x_2\right) \geq \left(2+s_2^{\{23\}}\right)\left(x_3-x_2\right) \right\}}+ s^{\{13\mid 2\}}_2\mathbbm{1}_{\left\{\left(2+s_1^{\{12\}}\right)\left(x_1-x_2\right) < \left(2+s_2^{\{23\}}\right)\left(x_3-x_2\right) \right\}}.
\end{align*}
\EDIT{As for the previous case, we note that $g(1,1,1)=1$, which implies that $\eta_{\{1,2,3\}}=\eta_{\{1,3\}}=1$.} For a trivariate vine consisting of three logistic pair copulas, this gives the gauge function
\begin{align*}
g(\bm{x})=
	\begin{cases}
       x_2 + \left[\left\{\left(1/\alpha-1\right)\left(x_2-x_1\right)\right\}^{1/\gamma} + \left\{\left(1/\beta-1\right)\left(x_2-x_3\right)\right\}^{1/\gamma}\right]^\gamma, & \max(x_1,x_3)\leq x_2,\\
       x_2 + \left(1/\gamma\right)\left(1/\beta\right)\left(x_3-x_2\right) +  \left(1/\alpha-1\right)\left(x_2-x_1 \right),& x_1 \leq x_2<x_3,\\
       x_2 + \left(1/\gamma\right)\left(1/\alpha\right)\left(x_1-x_2\right) +  \left(1/\beta-1\right)\left(x_2-x_3 \right),& x_3\leq x_2<x_1,\\
       x_2 + \left( 1/\gamma \right)\max\left\{ \left(x_1-x_2\right)/\alpha, \left(x_3-x_2\right)/\beta \right\}+\left( 1-1/\gamma \right)\min\left\{ \left(x_1-x_2\right)/\alpha, \left(x_3-x_2\right)/\beta \right\},&x_2 <\min(x_1,x_3),
	\end{cases}
\end{align*}
with $\min(x_1,x_2,x_3)\geq 0$; \EDIT{see Fig.~\ref{fig:g6} for an example.} 

\section{Discussion}
The aim of this paper was to investigate some of the tail dependence properties of vine copulas, via the coefficient of tail dependence of \cite{Ledford1996}. We demonstrated how to apply the geometric approach of \cite{Nolde2014} to calculate these values from a density, and applied further theory from \cite{Nolde2020} for cases where the joint density of $(X_i:i\in \mathcal{C})$ cannot be obtained analytically, but the joint density of $(X_i:i\in \mathcal{C}')$ with $\mathcal{C}'\supset \mathcal{C}$ is known. While values of $\eta_\mathcal{C}<1$ allow us to deduce that there is asymptotic independence between the variables $\bm{X}_\mathcal{C}$, these geometric approaches do not enable distinction between asymptotic independence and asymptotic dependence when $\eta_\mathcal{C}=1$.

We focused on trivariate vine copulas constructed from extreme value and inverted extreme value pair copulas, and higher dimensional $D$-vine and $C$-vine copulas constructed only from inverted extreme value pair copulas. In the latter case, there is overall asymptotic independence between the variables. In the former case, the copulas in tree $T_1$ particularly influence the overall tail dependence properties of the vine. If there are two asymptotically dependent extreme value copulas in tree $T_1$, there is overall asymptotic dependence in the vine, as found by \cite{Joeetal2010}, otherwise, all three variables cannot be large together, although other subsets of the variables could be simultaneously extreme.

In Section~\ref{sec:intro}, we discussed the idea of extremal dependence structures, i.e., that different subsets of variables can take their largest values simultaneously while others are of smaller order \citep{Simpson2020}. Let the extremal dependence structure of the variables $\bm{X}=(X_1,X_2,X_3)$ be denoted by a set $\mathcal{A}$, such that if $A\in\mathcal{A}$ the variables indexed by $A\subseteq\{1,2,3\}$ can be simultaneously large while the others are small. For the trivariate case, our examples comprise all possible combinations of asymptotically independent and asymptotically dependent pair copulas for the three components of the vine. Throughout the paper, the spectral density of the asymptotically dependent components was restricted to placing mass on $(0,1)$ as in~\eqref{eqn:EVassumption}, while asymptotic independence corresponds to mass on $\{0\}$ and $\{1\}$. Our results suggest that the only extremal dependence structures possible in this setting are $\mathcal{A}=\{\{1\},\{2\},\{3\}\}$, $\{\{1\},\{2,3\}\}$, $\{\{2\},\{1,3\}\}$, $\{\{3\},\{1,2\}\}$ and $\{\{1,2,3\}\}$. While it is unclear whether the structure $\mathcal{A}=\{\{2\},\{1,3\}\}$ is possible for the specific form of the vine we consider (Fig.~\ref{fig:triVine}), it is possible with relabelling of the variables, hence its inclusion here. This suggests that each variable is only represented in one of the simultaneously-extreme subsets, and it is likely that this issue would also occur in higher dimensions. Obtaining more complicated structures would require pair copulas that place extremal mass on different combinations of the sets $\{0\},(0,1),$ and $\{1\}$, such as the asymmetric logistic model of \cite{Tawn1990} discussed in case~(vi) of Section~\ref{subsec:bivariateEtas}. However, we conjecture that certain extremal dependence structures will never be possible due to restrictions imposed by the vine.

As an example, suppose we are interested in the structure $\{\{1,2\},\{1,3\},\{2,3\}\}$, so that only pairs of variables can be large simultaneously while the third is of smaller order. If both pair copulas in tree $T_1$ place mass on $(0,1)$, the set $\{1,2,3\}$ will be included in the extremal dependence structure \citep{Joeetal2010}. This implies that at least one component of $T_1$ must exhibit asymptotic independence to obtain our required structure. However, any pair of variables for which asymptotic independence is imposed in $T_1$ can never be simultaneously extreme, i.e., it would not possible for both $\{1,2\}$ and $\{2,3\}$ to be included in the dependence structure in this case. The structure $\{\{1,2\},\{1,3\},\{2,3\}\}$ can therefore not be achieved, and actually the pairs $\{1,2\}$ and $\{2,3\}$ cannot both be included in the extremal dependence structure unless $\{1,2,3\}$ also is.

Although the full set of extremal dependence structures may not be captured using vine copulas, it appears that they do allow for a wide range of possibilities, and investigating this topic further presents a possible avenue for future work.

\section*{Acknowledgments}
This paper is based on work completed while E.\ Simpson was part of the EPSRC funded STOR-i centre for doctoral training (EP/L015692/1). J.\ Wadsworth gratefully acknowledges the support of EPSRC fellowship EP/P002838/1. We thank Ingrid Hob{\ae}k Haff and Arnoldo Frigessi for helpful discussions, \EDIT{and the Editor, Associate Editor and referees for their comments. In particular, we appreciate feedback from the referees that allowed us obtain some results in closed form that we previously thought could only be found numerically.}

\appendix
\section*{Appendix}

\section{Proof of Theorem~\ref{thm:d-dim_D-vine_gauge}}\label{app:DvineEtaD_generalProof}
\subsection{Identifying sub-vines of $D$-vines to construct the gauge function}
A $D$-vine is represented graphically by a series of $d-1$ trees, labelled $T_1,\ldots,T_{d-1}$. Each of these trees is a path, and we suppose that the nodes are labelled in ascending order, as in the left plot of Fig.~\ref{fig:4D_Vines} for the case where $d=4$. Moving from a $D$-vine of dimension $d\geq 4$ to one of dimension $d+1$ involves first adding an extra node and edge onto each tree in the graph. In tree $T_1$, the extra node has label $d+1$, and the extra edge label is $\{d,d+1\}$. In tree $T_2$ the extra node is labelled $\{d,d+1\}$ and the edge is labelled $\{d-1,d+1\}|d$, and this continues until we reach tree $T_{d-1}$, where the extra node is labelled $\{3,d+1\}|\{4,\ldots,d\}$ and the corresponding edge is labelled $\{2,d+1\}|\{3,\ldots,d\}$. We finally must also add the tree $T_d$, with nodes labelled $\{1,d\}|\{2,\ldots,d-1\}$ and $\{2,d+1\}|\{3,\ldots,d\}$, and corresponding edge label $\{1,d+1\}|\{2,\ldots,d\}$. This is demonstrated in Fig.~\ref{fig:4Dto5DVine}, for an example where we move from a $D$-vine of dimension four to one of dimension five.

\begin{figure}[!htbp]
\begin{center}
	\includegraphics[width=0.5\textwidth]{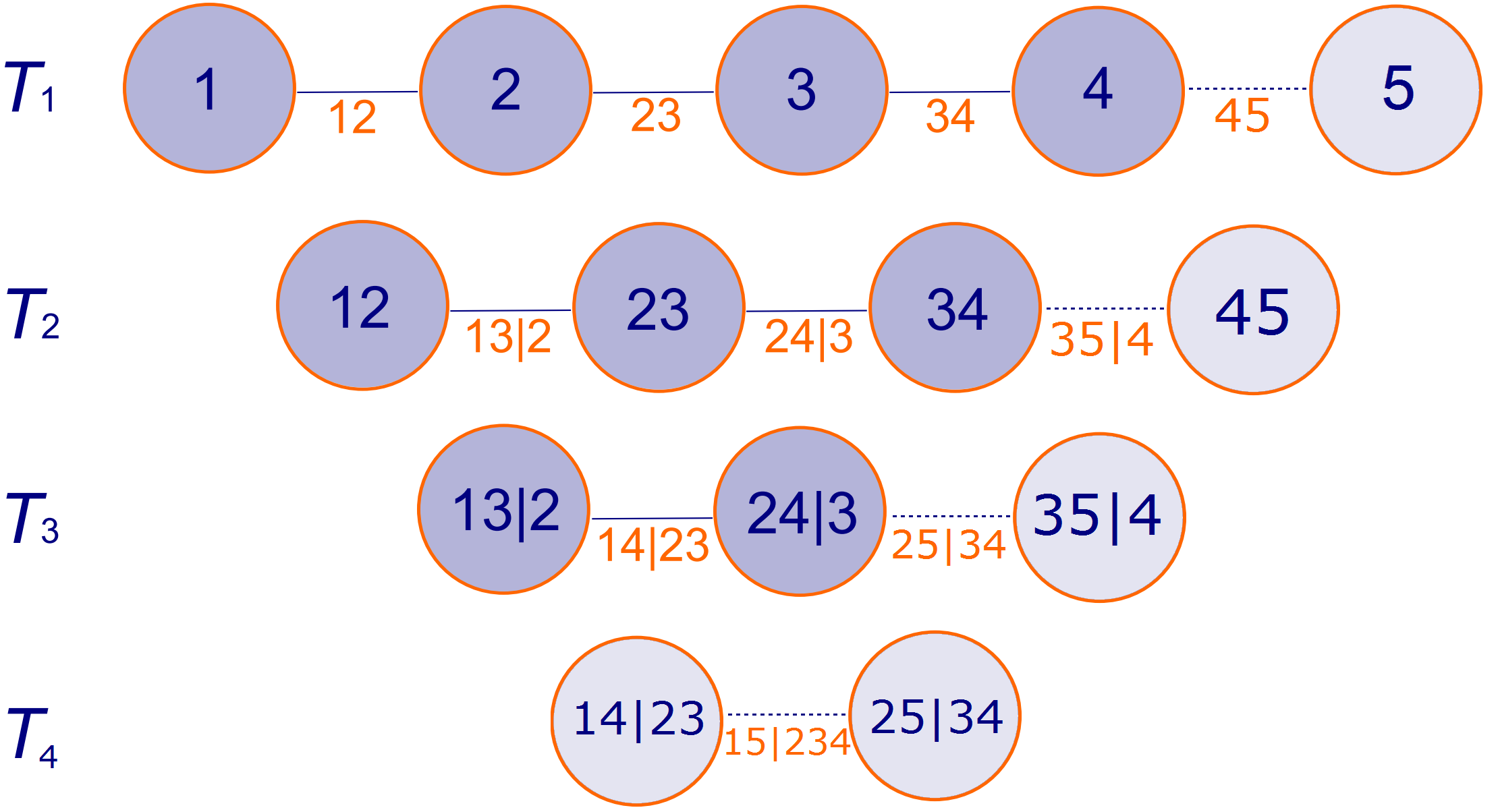}
	\caption{Example of the extending a four-dimensional $D$-vine to a five-dimensional $D$-vine.}
	\label{fig:4Dto5DVine}
\end{center}
\end{figure}

Due to this iterative construction, we can consider a $d$-dimensional $D$-vine in terms of three lower dimensional `sub-vines' in a similar way to in Fig.~\ref{fig:4DVineSeparated} for the $d=4$ case. In particular, in trees $T_1,\ldots,T_{d-2}$, we have two sub-vines of dimension $d-1$; the first corresponds to variables with labels in $\{1,\ldots,d-1\}=\mathcal{D}\backslash\{d\}$, and the second to variables with labels in $\{2,\ldots,d\}=\mathcal{D}\backslash\{1\}$. In the graph, these two sub-vines will overlap in the region corresponding to a further sub-vine, this time of dimension $d-2$ and corresponding to variables with labels in $\{2,\ldots,d-1\}=\mathcal{D}\backslash\{1,d\}$.

In order to calculate the gauge function, we consider the behaviour of $-\ln f(t\bm{x})$, as $t\rightarrow\infty$. By considering these three sub-vines, we see that this can be written as
\begin{align*}
	-\ln f(t\bm{x}) = &-\ln f_{\mathcal{D}\backslash\{d\}}\left(t\bm{x}_{-\{d\}}\right) -\ln f_{\mathcal{D}\backslash\{1\}}\left(t\bm{x}_{-\{1\}}\right) + \ln f_{\mathcal{D}\backslash\{1,d\}}\left(t\bm{x}_{-\{1,d\}}\right) \\
	&- \ln c_{\{1,d\}|\mathcal{D}\backslash\{1,d\}}\left\{F_{1|\mathcal{D}\backslash\{1,d\}}\left(tx_1|t\bm{x}_{-\{1,d\}}\right),F_{d|\mathcal{D}\backslash\{1,d\}}\left(tx_d|t\bm{x}_{-\{1,d\}}\right)\right\},
\end{align*}
$\bm{x}\in\mathbb{R}^d$. Note that this is the form given for the $d=4$ case in equation~\eqref{eqn:-logf_4d}. We can therefore infer that the $d$-dimensional gauge function $g(\bm{x})$, defined as $-\ln f(t\bm{x})\sim tg(\bm{x})$ as $t\rightarrow\infty$, satisfies
\begin{align}
g(\bm{x}) = g_{\mathcal{D}\backslash\{d\}}\left(\bm{x}_{-\{d\}}\right) + g_{\mathcal{D}\backslash\{1\}}\left(\bm{x}_{-\{1\}}\right) - g_{\mathcal{D}\backslash\{1,d\}}\left(\bm{x}_{-\{1,d\}}\right) + \tilde{g}_\mathcal{D}(\bm{x}),
\label{eqn:gaugeIncomplete}
\end{align}
$\bm{x}\in\mathbb{R}^d$, where, as $t\rightarrow\infty$,
\begin{align}
- \ln c_{\{1,d\}|\mathcal{D}\backslash\{1,d\}}\left\{F_{1|\mathcal{D}\backslash\{1,d\}}\left(tx_1|t\bm{x}_{-\{1,d\}}\right),F_{d|\mathcal{D}\backslash\{1,d\}}\left(tx_d|t\bm{x}_{-\{1,d\}}\right)\right\}\sim t\tilde{g}_\mathcal{D}(\bm{x}).
\label{eqn:gtilde}
\end{align}
In Section~\ref{subsec:IEVcopProperties} of the Supplementary Material, we present two lemmas concerning properties of inverted extreme value copulas that will be used in \ref{subsec:IEV_dvine_gaugeCalc} to find $\tilde{g}_\mathcal{D}(\bm{x})$, and hence the form of the gauge function for a $d$-dimensional $D$-vine with inverted extreme value components.

\subsection{Calculation of the gauge function}\label{subsec:IEV_dvine_gaugeCalc}
We claim that the $d$-dimensional $D$-vine has a gauge function of the form stated in Theorem~\ref{thm:d-dim_D-vine_gauge}. From equation~\eqref{eqn:4-dim_D-vine_gauge}, we have already shown this to be the case for $d=4$. To prove this more generally, we assume that the result holds for the two $(d-1)$-dimensional sub-vines of the $d$-dimensional $D$-vine, i.e.,
\begin{align}
	g_{\mathcal{D}\backslash\{d\}} = g_{\mathcal{D}\backslash\{1,d-1,d\}} + V^{\{1,d-1|\mathcal{D}\backslash\{1,d-1,d\}\}}\left(\frac{1}{g_{\mathcal{D}\backslash\{d-1,d\}}-g_{\mathcal{D}\backslash\{1,d-1,d\}}},\frac{1}{g_{\mathcal{D}\backslash\{1,d\}}-g_{\mathcal{D}\backslash\{1,d-1,d\}}}\right),
\label{gaugeAssumption1}
\end{align}
and
\[
	g_{\mathcal{D}\backslash\{1\}} = g_{\mathcal{D}\backslash\{1,2,d\}} + V^{\{2,d|\mathcal{D}\backslash\{1,2,d\}\}}\left(\frac{1}{g_{\mathcal{D}\backslash\{1,d\}}-g_{\mathcal{D}\backslash\{1,2,d\}}},\frac{1}{g_{\mathcal{D}\backslash\{1,2\}}-g_{\mathcal{D}\backslash\{1,2,d\}}}\right),
\]
where we have dropped the arguments to simplify notation. Further, we claim that the conditional distribution functions used in the calculation of~\eqref{eqn:gtilde} have the form
\begin{align}
	F_{1|\mathcal{D}\backslash\{1,d\}}&\left(tx_1|t\bm{x}_{-\{1,d\}}\right)= 1 - k_{1|\mathcal{D}\backslash\{1,d\}}\exp\left\{-t\left(g_{\mathcal{D}\backslash\{d\}} - g_{\mathcal{D}\backslash\{1,d\}}\right)\right\}\{1+o(1)\},
\label{eqn:F_1givenrest}
\end{align}
and
\begin{align}
	F_{d|\mathcal{D}\backslash\{1,d\}}&\left(tx_d|t\bm{x}_{-\{1,d\}}\right) = 1 - k_{d|\mathcal{D}\backslash\{1,d\}}\exp\left\{-t\left(g_{\mathcal{D}\backslash\{1\}} - g_{\mathcal{D}\backslash\{1,d\}}\right)\right\}\{1+o(1)\},
\label{eqn:F_dgivenrest}
\end{align}
as $t\rightarrow\infty$, for some $k_{1|\mathcal{D}\backslash\{1,d\}},k_{d|\mathcal{D}\backslash\{1,d\}}>0$ not depending on $t$. From result~\eqref{eqn:F1|23asymp}, we see that this claim holds for $d=4$. To prove this more generally, we assume that \eqref{eqn:F_1givenrest}~and~\eqref{eqn:F_dgivenrest} hold in the $(d-1)$-dimension case, so that  as $t\rightarrow\infty$,
\begin{align*}
	&F_{1|\mathcal{D}\backslash\{1,d-1,d\}}\left(tx_1|t\bm{x}_{-\{1,d-1,d\}}\right) = 1 - k_{1|\mathcal{D}\backslash\{1,d-1,d\}}\exp\left\{-t\left(g_{\mathcal{D}\backslash\{d-1,d\}} - g_{\mathcal{D}\backslash\{1,d-1,d\}}\right)\right\}\{1+o(1)\},
\end{align*}
and
\begin{align*}
	&F_{d-1|\mathcal{D}\backslash\{1,d-1,d\}}\left(tx_{d-1}|t\bm{x}_{-\{1,d-1,d\}}\right) = 1 - k_{d-1|\mathcal{D}\backslash\{1,d-1,d\}}\exp\left\{-t\left(g_{\mathcal{D}\backslash\{1,d\}} - g_{\mathcal{D}\backslash\{1,d-1,d\}}\right)\right\}\{1+o(1)\},
\end{align*}
for some $k_{1|\mathcal{D}\backslash\{1,d-1,d\}}, k_{d-1|\mathcal{D}\backslash\{1,d-1,d\}}>0$. Results from \cite{Joe1996} show that
\begin{align*}
	F&_{1|\mathcal{D}\backslash\{1,d\}}\left(x_1|\bm{x}_{-\{1,d\}}\right) = \frac{\partial C_{1,d-1|\mathcal{D}\backslash \{1,d-1,d\}} \left\{F_{1|\mathcal{D}\backslash \{1,d-1,d\}}(x_1|\bm{x}_{-\{1,d-1,d\}}), F_{d-1|\mathcal{D}\backslash \{1,d-1,d\}}(x_{d-1}|\bm{x}_{-\{1,d-1,d\}})\right\} }{\partial  F_{d-1|\mathcal{D}\backslash \{1,d-1,d\}}(x_{d-1}|\bm{x}_{-\{1,d-1,d\}})},
\end{align*}
with result~\eqref{eqn:conditional} giving the form of the required derivative of an inverted extreme value copula. Applying Lemma~\ref{lemma:conditional}, with $b_1=g_{\mathcal{D}\backslash\{d-1,d\}} - g_{\mathcal{D}\backslash\{1,d-1,d\}}$ and $b_2=g_{\mathcal{D}\backslash\{1,d\}} - g_{\mathcal{D}\backslash\{1,d-1,d\}}$, we see that for some $k_{1|\mathcal{D}\backslash\{1,d\}}$, as $t\rightarrow\infty$,
\begin{align*}
F&_{1|\mathcal{D}\backslash\{1,d\}}\left(tx_1|t\bm{x}_{-\{1,d\}}\right)= 1 - k_{1|\mathcal{D}\backslash\{1,d\}}\\
&\cdot\exp\bigg(
-t\bigg[V^{\{1,d-1|\mathcal{D}\backslash\{1,d-1,d\}\}}\left\{\frac{1}{g_{\mathcal{D}\backslash\{d-1,d\}} - g_{\mathcal{D}\backslash\{1,d-1,d\}}}, \frac{1}{g_{\mathcal{D}\backslash\{1,d\}} - g_{\mathcal{D}\backslash\{1,d-1,d\}}}\right\}-g_{\mathcal{D}\backslash\{1,d\}} + g_{\mathcal{D}\backslash\{1,d-1,d\}}\bigg]\bigg)\{1+o(1)\}\\
&=1 - k_{1|\mathcal{D}\backslash\{1,d\}}\exp\left\{ -t\left( g_{\mathcal{D}\backslash\{d\}}-g_{\mathcal{D}\backslash\{1,d\}}\right)\right\}\{1+o(1)\}~~~~~~~~~~\text{by assumption~\eqref{gaugeAssumption1}}.
\end{align*}
Result~\eqref{eqn:F_dgivenrest} can be proved by a similar argument. From results~\eqref{eqn:F_1givenrest}~and~\eqref{eqn:F_dgivenrest}, we see that $F_{1|\mathcal{D}\backslash\{1,d\}}\left(tx_1|t\bm{x}_{-\{1,d\}}\right)$ and $F_{d|\mathcal{D}\backslash\{1,d\}}\left(tx_d|t\bm{x}_{-\{1,d\}}\right)$ can be written in the form required to apply Lemma~\ref{lemma:copulaDensity}, with $b_1 = g_{\mathcal{D}\backslash\{d\}} - g_{\mathcal{D}\backslash\{1,d\}}$ and $b_2 = g_{\mathcal{D}\backslash\{1\}}  - g_{\mathcal{D}\backslash\{1,d\}}$. Applying Lemma~\ref{lemma:copulaDensity}, we have
\begin{align*}
- &\ln c_{\{1,d\}|\mathcal{D}\backslash\{1,d\}}\left\{F_{1|\mathcal{D}\backslash\{1,d\}}\left(tx_1|t\bm{x}_{-\{1,d\}}\right),F_{d|\mathcal{D}\backslash\{1,d\}}\left(tx_d|t\bm{x}_{-\{1,d\}}\right)\right\}\\
&\sim t\left\{2g_{\mathcal{D}\backslash\{1,d\}} - g_{\mathcal{D}\backslash\{d\}} - g_{\mathcal{D}\backslash\{1\}}+V^{\{1,d\mid \mathcal{D}\backslash\{1,d\}\}}\left(\frac{1}{g_{\mathcal{D}\backslash\{d\}} -g_{\mathcal{D}\backslash\{1,d\}}},\frac{1}{g_{\mathcal{D}\backslash\{1\}} -g_{\mathcal{D}\backslash\{1,d\}}}\right)\right\}\\
	&=t\tilde{g}_\mathcal{D}(\bm{x}),
\end{align*}
and combining this with the gauge function result in~\eqref{eqn:gaugeIncomplete}, we have
\begin{align*}
	g&(\bm{x}) = g_{\mathcal{D}\backslash\{1,d\}}(\bm{x}_{-\{1,d\}})+V^{\{1,d\mid \mathcal{D}\backslash\{1,d\}\}}\left\{\frac{1}{g_{\mathcal{D}\backslash\{d\}}(\bm{x}_{-\{d\}})-g_{\mathcal{D}\backslash\{1,d\}}(\bm{x}_{-\{1,d\}})},\frac{1}{g_{\mathcal{D}\backslash\{1\}}(\bm{x}_{-\{1\}})-g_{\mathcal{D}\backslash\{1,d\}}(\bm{x}_{-\{1,d\}})}\right\},
\end{align*}
hence proving Theorem~\ref{thm:d-dim_D-vine_gauge} by induction.


{\small\bibliography{refs}}


\newpage
\appendix
\begin{center}
{\bf\Large Supplementary Material for `A geometric investigation into the tail dependence of vine copulas'}\\
{\large Emma S.\ Simpson, Jennifer L.\ Wadsworth and Jonathan, A.\ Tawn\\}
Lancaster University
\end{center}

\section{Calculation of the gauge function for a trivariate vine copula with inverted extreme value pair copula components}\label{SMsec:triIEVproof}
In Section~\ref{subsec:trivariateIEV}, we presented the form of the gauge function for a trivariate vine copula constructed from three inverted extreme value components. The exponent measures are denoted $V^{\{12\}}$, $V^{\{23\}}$ and $V^{\{13|2\}}$, corresponding to the links in the vine. Here, we provide a proof of this result.

To investigate the form of the gauge function, we study the asymptotic form of $-\ln f(t\bm{x})$, and for a trivariate vine this can be broken down into the six components in~\eqref{eqn:negLogTriVine}. We consider each of these in turn. Working in standard exponential margins, we have $-\ln f_i(tx_i) = -\ln\left(e^{-tx_i}\right)= tx_i$, for $i=1,2,3$ with $x_i\geq 0$. We also have marginal distribution functions $F_{i}(tx_i)=1-e^{-tx_i}$, for $i=1,2,3$. Noting that $-\ln\left\{1-F_i(tx_i)\right\} = tx_i$; the exponent measure $V^{\{12\}}$ is homogeneous of order $-1$; $V^{\{12\}}_1$ and $V^{\{12\}}_2$ are homogeneous of order $-2$; and $V^{\{12\}}_{12}$ is homogeneous of order $-3$, we have
\begin{align}
	-\ln c_{12}\left\{F_1(tx_1),F_2(tx_2)\right\} = &-tx_1 - tx_2 + 2\ln(tx_1) + 2\ln(tx_2)+ V^{\{12\}}\left\{(tx_1)^{-1},(tx_2)^{-1}\right\}\nonumber\\ 
	&-\ln\left[V^{\{12\}}_1\left\{(tx_1)^{-1},(tx_2)^{-1}\right\}V^{\{12\}}_2\left\{(tx_1)^{-1},(tx_2)^{-1}\right\} - V^{\{12\}}_{12}\left\{(tx_1)^{-1},(tx_2)^{-1}\right\}\right]\nonumber\\
	=&~t\left\{V^{\{12\}}\left(x_1^{-1},x_2^{-1}\right)-x_1-x_2\right\} + 2\left\{\ln(tx_1)+\ln(tx_2)\right\}\nonumber\\
	&-\ln\left\{t^4 V^{\{12\}}_1\left(x_1^{-1},x_2^{-1}\right)V^{\{12\}}_2\left(x_1^{-1},x_2^{-1}\right) - t^3 V^{\{12\}}_{12}\left(x_1^{-1},x_2^{-1}\right)\right\}\nonumber\\
	=&~t\left\{V^{\{12\}}\left(x_1^{-1},x_2^{-1}\right)-x_1-x_2\right\} + O(\ln t),
\label{eqn:negLogTriVine(a)}
\end{align}
and by similar calculations,
\begin{align}
	-\ln c_{23}\left\{F_2(tx_2),F_3(tx_3)\right\} = t\left\{V^{\{23\}}\left(x_2^{-1},x_3^{-1}\right)-x_2-x_3\right\} + O(\ln t).
\label{eqn:negLogTriVine(b)}
\end{align}
Substituting the marginal distribution functions $F_{i}(tx_i)$, $i=1,2$, into the corresponding conditional copulas, we have 
\begin{align*}
	F_{1\mid 2}(tx_1\mid tx_2) &= 1 + e^{tx_2}(tx_2)^{-2}V_2^{\{12\}}\left\{(tx_1)^{-1},(tx_2)^{-1}\right\}\exp\left[-V^{\{12\}}\left\{(tx_1)^{-1},(tx_2)^{-1}\right\}\right]\\
	&=1 + x_2^{-2}V^{\{12\}}_2\left(x_1^{-1},x_2^{-1}\right)\exp\left[t\left\{x_2 - V^{\{12\}}\left(x_1^{-1}, x_2^{-1}\right)\right\}\right],
\end{align*}
and similarly,
\[
	F_{3\mid 2}(tx_3\mid tx_2) = 1 + x_2^{-2}V^{\{23\}}_1\left(x_2^{-1},x_3^{-1}\right)\exp\left[t\left\{x_2 - V^{\{23\}}\left(x_2^{-1}, x_3^{-1}\right)\right\}\right].
\]
Setting $h_{1\mid 2}=\ln\left\{- x_2^{-2}V_2^{\{12\}}\left(x_1^{-1},x_2^{-1}\right)\right\}$ and defining $h_{3\mid 2}$ analogously,
\begin{align*}
\ln\left\{1-F_{1\mid 2}(tx_1\mid tx_2)\right\} = h_{1\mid 2} + t\left\{x_2-V^{\{12\}}\left(x_1^{-1},x_2^{-1}\right)\right\};\\
\ln\left\{1-F_{3\mid 2}(tx_3\mid tx_2)\right\} = h_{3\mid 2} + t\left\{x_2-V^{\{23\}}\left(x_2^{-1},x_3^{-1}\right)\right\},
\end{align*}
with $h_{1\mid 2}$ and $h_{3\mid 2}$ not depending on $t$. This implies that
{\small \begin{align}
	-\ln c_{13\mid 2}\left\{F_{1\mid 2}(tx_1\mid tx_2), F_{3\mid 2}(tx_3\mid tx_2)\right\} &\nonumber\\
	&\hspace{-5cm}= \ln\left\{1-F_{1\mid 2}(tx_1\mid tx_2)\right\} + \ln\left\{1-F_{3\mid 2}(tx_3\mid tx_2)\right\}+ 2\ln\left[-\ln\left\{1-F_{1\mid 2}(tx_1\mid tx_2)\right\}\right] + 2\ln\left[-\ln\left\{1-F_{3\mid 2}(tx_3\mid tx_2)\right\}\right]\nonumber\\
	&\hspace{-4.75cm}+V^{\{13\mid 2\}}\left[\frac{-1}{\ln\left\{1-F_{1\mid 2}(tx_1\mid tx_2)\right\}},\frac{-1}{\ln\left\{1-F_{3\mid 2}(tx_3\mid tx_2)\right\}} \right]\nonumber\\
	&\hspace{-4.75cm}-\ln\Bigg(V_1^{\{13\mid 2\}}\left[\frac{-1}{\ln\left\{1-F_{1\mid 2}(tx_1\mid tx_2)\right\}},\frac{-1}{\ln\left\{1-F_{3\mid 2}(tx_3\mid tx_2)\right\}} \right]\cdot V_2^{\{13\mid 2\}}\left[\frac{-1}{\ln\left\{1-F_{1\mid 2}(tx_1\mid tx_2)\right\}},\frac{-1}{\ln\left\{1-F_{3\mid 2}(tx_3\mid tx_2)\right\}} \right]\nonumber\\
	&\hspace{-3.5cm}-V_{12}^{\{13\mid 2\}}\left[\frac{-1}{\ln\left\{1-F_{1\mid 2}(tx_1\mid tx_2)\right\}},\frac{-1}{\ln\left\{1-F_{3\mid 2}(tx_3\mid tx_2)\right\}} \right]\Bigg)\nonumber\\
	&\hspace{-5cm}=h_{1\mid 2} + h_{3\mid 2}+ t\left\{2x_2 - V^{\{12\}}(x_1^{-1},x_2^{-1})- V^{\{23\}}(x_2^{-1},x_3^{-1})\right\}\nonumber\\
	&\hspace{-4.75cm}+ 2\ln \left[t\left\{V^{\{12\}}\left(x_1^{-1},x_2^{-1}\right)-x_2\right\}-h_{1\mid 2}\right]	+ 2\ln \left[t\left\{V^{\{23\}}\left(x_2^{-1},x_3^{-1}\right)-x_2\right\}-h_{3\mid 2}\right]	\nonumber\\
	&\hspace{-4.75cm} + tV^{\{13\mid 2\}}\left\{\frac{1}{V^{\{12\}}(x_1^{-1},x_2^{-1})-x_2 -\frac{1}{t}h_{1\mid 2}},\frac{1}{V^{\{23\}}(x_2^{-1},x_3^{-1})-x_2 -\frac{1}{t}h_{3\mid 2}}\right\}\nonumber\\	
	&\hspace{-4.75cm}-\ln\Bigg[t^4V_1^{\{13\mid 2\}}\left\{\frac{1}{V^{\{12\}}(x_1^{-1},x_2^{-1})-x_2 -\frac{1}{t}h_{1\mid 2}},\frac{1}{V^{\{23\}}(x_2^{-1},x_3^{-1})-x_2 -\frac{1}{t}h_{3\mid 2}}\right\}\nonumber\\
	&\hspace{-2.25cm}\cdot V_2^{\{13\mid 2\}}\left\{\frac{1}{V^{\{12\}}(x_1^{-1},x_2^{-1})-x_2 -\frac{1}{t}h_{1\mid 2}},\frac{1}{V^{\{23\}}(x_2^{-1},x_3^{-1})-x_2 -\frac{1}{t}h_{3\mid 2}}\right\}\nonumber\\
	&\hspace{-2cm}-t^3V_{12}^{\{13\mid 2\}}\left\{\frac{1}{V^{\{12\}}(x_1^{-1},x_2^{-1})-x_2 -\frac{1}{t}h_{1\mid 2}},\frac{1}{V^{\{23\}}(x_2^{-1},x_3^{-1})-x_2 -\frac{1}{t}h_{3\mid 2}}\right\}	\Bigg]\nonumber\\
	&\hspace{-5cm}= t\left\{2x_2-V^{\{12\}}\left(x_1^{-1},x_2^{-1}\right)-V^{\{23\}}\left(x_2^{-1},x_3^{-1}\right) \right\}+ tV^{\{13\mid 2\}}\left[\left\{V^{\{12\}}\left(x_1^{-1},x_2^{-1}\right)-x_2\right\}^{-1},\left\{V^{\{23\}}\left(x_2^{-1},x_3^{-1}\right)-x_2\right\}^{-1} \right] + O(\ln t).
\label{eqn:negLogTriVine(c)}
\end{align}}
Substituting results~\eqref{eqn:negLogTriVine(a)},~\eqref{eqn:negLogTriVine(b)}~and~\eqref{eqn:negLogTriVine(c)} into \eqref{eqn:negLogTriVine} yields 
{\small \begin{align}
	-\ln f(t\bm{x}) &= ~t(x_1+x_2+x_3) + t\left\{V^{\{12\}}\left(x_1^{-1},x_2^{-1}\right)-x_1-x_2\right\} + t\left\{V^{\{23\}}\left(x_2^{-1},x_3^{-1}\right)-x_2-x_3\right\}\nonumber\\
		&\hspace{-0.5cm} + t\left\{2x_2 - V^{\{12\}}\left(x_1^{-1},x_2^{-1}\right) - V^{\{23\}}\left(x_2^{-1},x_3^{-1}\right)\right\}+ tV^{\{13\mid 2\}}\left[\left\{V^{\{12\}}\left(x_1^{-1},x_2^{-1}\right)-x_2\right\}^{-1},\left\{V^{\{23\}}\left(x_2^{-1},x_3^{-1}\right)-x_2\right\}^{-1} \right]+ O(\ln t)\nonumber\\
		&=~t\left(x_2 + V^{\{13\mid 2\}}\left[\left\{V^{\{12\}}\left(x_1^{-1},x_2^{-1}\right)-x_2\right\}^{-1},\left\{V^{\{23\}}\left(x_2^{-1},x_3^{-1}\right)-x_2\right\}^{-1} \right]\right)+ O(\ln t),
		\label{eqn:trivariateAsymptotics}
\end{align}}
i.e., the gauge function of this model is
\[
 g(\bm{x}) = x_2 + V^{\{13\mid 2\}}\left[\left\{V^{\{12\}}\left(x_1^{-1},x_2^{-1}\right)-x_2\right\}^{-1},\left\{V^{\{23\}}\left(x_2^{-1},x_3^{-1}\right)-x_2\right\}^{-1} \right].
\]

\EDIT{\section{Proof that~\eqref{eqn:ilogilogilogGauge} is an increasing function of $x_2$ and that \eqref{eqn:ilogilogilog_tconstraint} has a unique solution}\label{SMsec:4.1proofs}
Our calculations of $\eta_{\{1,2,3\}}$ and $\eta_{\{1,3\}}$ in Section~\ref{subsec:trivariateIEV} rely on certain properties of the gauge function in~\eqref{eqn:ilogilogilogGauge}, which we prove here. In particular, we are aiming to show that equation~\eqref{eqn:ilogilogilogGauge} is an increasing function of $x_2\geq 1$, and that equation~\eqref{eqn:ilogilogilog_tconstraint} has a unique solution.}

\EDIT{Let $h(v)=g(1,v,1)$, with the gauge function $g(\bm{x})$ defined as in equation~\eqref{eqn:ilogilogilogGauge}. Then,
\[
h(v) = v+\left(f_1^{1/\gamma}+f_2^{1/\gamma}\right)^\gamma, ~~~~~\text{with } ~f_1:=(1+v^{1/\alpha})^\alpha-v,~~f_2:=(1+v^{1/\beta})^\beta-v.
\]
Note that for $v\geq 0$ and $\alpha,\beta\in(0,1)$, $f_1,f_2\in(0,1]$, with $f_1,f_2\rightarrow 0$ as $v\rightarrow\infty$ and $f_1,f_2\rightarrow 1$ as $v\rightarrow 0$. Moreover,
\[
f_1'=v^{1/\alpha-1}(1+v^{1/\alpha})^{\alpha-1}-1=(1+v^{-1/\alpha})^{\alpha-1}-1<0,~~\forall~v\geq 0,
\]
so $f_1$, and by analogy $f_2$, is a decreasing function of $v$. We also note that the second derivative has the form
\[
f_1''=\frac{1-\alpha}{\alpha}v^{-1/\alpha-1}(1+v^{-1/\alpha})^{\alpha-2}>0,~~\forall~v\geq 0.
\]
Then we have
\[
h'(v)=1+\left(f_1^{1/\gamma}+f_2^{1/\gamma}\right)^{\gamma-1}\left(f_1^{1/\gamma-1}f_1' + f_2^{1/\gamma-1}f_2'\right),
\]
and
\begin{align}
&h''(v)=\frac{\gamma-1}{\gamma}\left(f_1^{1/\gamma}+f_2^{1/\gamma}\right)^{\gamma-2}\left(f_1^{1/\gamma-1}f_1' + f_2^{1/\gamma-1}f_2'\right)^2 \nonumber\\
&\hspace{3cm}+ \left(f_1^{1/\gamma}+f_2^{1/\gamma}\right)^{\gamma-1}\left\{\left(\frac{1}{\gamma}-1\right)f_1^{1/\gamma-2}f_1'^2 + f_1''f_1^{1/\gamma-1}+\left(\frac{1}{\gamma}-1\right)f_2^{1/\gamma-2}f_2'^2 + f_2''f_2^{1/\gamma-1}\right\}\nonumber\\
&=\left(f_1^{1/\gamma}+f_2^{1/\gamma}\right)^{\gamma-2}\nonumber\\
&\hspace{0.5cm}\cdot\left[\left(\frac{1-\gamma}{\gamma}\right)\left\{\left(f_1^{1/\gamma}+f_2^{1/\gamma}\right)\left(f_1^{1/\gamma-2}f_1'^2 +f_2^{1/\gamma-2}f_2'^2\right)-\left(f_1^{1/\gamma-1}f_1' + f_2^{1/\gamma-1}f_2'\right)^2\right\}
+\left(f_1^{1/\gamma}+f_2^{1/\gamma}\right)\left(f_1''f_1^{1/\gamma-1} + f_2''f_2^{1/\gamma-1}\right)\right].
\label{eqn:h''}
\end{align}
Since $f_1,f_2\in(0,1]$, we clearly have $\left(f_1^{1/\gamma}+f_2^{1/\gamma}\right)^{\gamma-2}>0$. We now consider the two terms within the square brackets in \eqref{eqn:h''} separately. First note that $(1-\gamma)/\gamma>0$ for $\gamma\in(0,1)$, and
\begin{align*}
\left(f_1^{1/\gamma}+f_2^{1/\gamma}\right)&\left(f_1^{1/\gamma-2}f_1'^2 +f_2^{1/\gamma-2}f_2'^2\right)-\left(f_1^{1/\gamma-1}f_1' + f_2^{1/\gamma-1}f_2'\right)^2\\
&=f_1'^2\left(f_1^{2/\gamma-2}+f_1^{1/\gamma-2}f_2^{1/\gamma}-f_1^{2/\gamma-2}\right) + f_2'^2\left(f_2^{2/\gamma-2}+f_2^{1/\gamma-2}f_1^{1/\gamma}-f_2^{2/\gamma-2}\right)-2f_1'f_2'f_1^{1/\gamma-1}f_2^{1/\gamma-1}\\
&=\left(f_1f_2\right)^{1/\gamma}\left\{\left(f_1'f_1^{-1}\right)^2+\left(f_2'f_2^{-1}\right)^2 - 2f_1'f_2'f_1^{-1}f_2^{-1}\right\}\\
&=\left(f_1f_2\right)^{1/\gamma}\left(f_1'f_1^{-1}+f_2'f_2^{-1}\right)^2>0.
\end{align*}
Secondly, since we have already shown that $f_1,f_2,f_1'',f_2''>0$, it follows that
\[
\left(f_1^{1/\gamma}+f_2^{1/\gamma}\right)\left(f_1''f_1^{1/\gamma-1} + f_2''f_2^{1/\gamma-1}\right)>0.
\]
So all terms in \eqref{eqn:h''} are strictly positive. That is, $h''(v)>0$ for all $v\geq 0$, which implies that $h'(v)$ is increasing for $v\geq 0$.}

\EDIT{We note that $h'(0)=1-2^\gamma<0$, and we have
\[
h'(1) = 1 - \left\{(2^\alpha-1)^{1/\gamma}+(2^\beta-1)^{1/\gamma}\right\}^{\gamma-1}\left\{(1-2^{\alpha-1})(2^\alpha-1)^{-1+1/\gamma}+(1-2^{\beta-1})(2^\beta-1)^{-1+1/\gamma}\right\}.
\]
Since $x^{1/\gamma}+y^{1/\gamma}> x^{1/\gamma}$, for $x,y>0$, we have $(x^{1/\gamma}+y^{1/\gamma})^{\gamma-1}< (x^{1/\gamma})^{\gamma-1}$ for $\gamma\in(0,1)$, so 
\[
h'(1)> 1- (2^\alpha-1)^{1-1/\gamma}\left\{(1-2^{\alpha-1})(2^\alpha-1)^{-1+1/\gamma}+(1-2^{\beta-1})(2^\beta-1)^{-1+1/\gamma}\right\}.
\]
Without loss of generality, assume that $\alpha\geq\beta$, so $1-2^{\alpha-1}\leq1-2^{\beta-1}$ and $2^\alpha-1\geq 2^\beta-1$, then
\[
h'(1)> 1- (2^\alpha-1)^{1-1/\gamma}\cdot 2\cdot(1-2^{\beta-1})(2^\alpha-1)^{-1+1/\gamma}=1-2(1-2^{\beta-1})=2^\beta-1>0.
\]
So $h'(v)$ is increasing in the range $[0,1]$, and we have shown that $h'(0)<0$ and $h'(1)>0$, so that solution of the equation $h'(v)=0$ in equation~\eqref{eqn:ilogilogilog_tconstraint} is unique, and lies in the range $(0,1)$.}

\EDIT{Moreover, this implies that $h'(v)>0$ for $v\geq 1$, so $h(v)=g(1,v,1)$ is increasing for $v\geq 1$, and therefore $\min_{v\geq 1}g(1,v,1)=g(1,1,1)$, which is required for equation~\eqref{eqn:ilogilogilogEta123} to hold.}

\section{Proof of~\eqref{eqn:4-dim_D-vine_gauge}}\label{SMsec:4Dvinegauge}
In Section~\ref{subsec:trivariateIEV}, we studied gauge functions of trivariate vine copulas with inverted extreme value components. From calculation~\eqref{eqn:trivariateAsymptotics}, we know that
\begin{align}
-\ln f_{123}(tx_1,tx_2,tx_3)= &t\left(x_2 + V^{\{13\mid 2\}}\left[\left\{V^{\{12\}}\left(x_1^{-1},x_2^{-1}\right)-x_2\right\}^{-1},\left\{V^{\{23\}}\left(x_2^{-1},x_3^{-1}\right)-x_2\right\}^{-1} \right]\right)  +O(\ln t),
		\label{eqn:-logf123}
\end{align}
$x_1,x_2,x_3\geq 0$, and considering $f_{234}$ as equivalent to $f_{123}$ up to labelling of the variables, we have
\begin{align}
-\ln f_{234}(tx_2,tx_3,tx_4)=&t\left(x_3 + V^{\{24\mid 3\}}\left[\left\{V^{\{23\}}\left(x_2^{-1},x_3^{-1}\right)-x_3\right\}^{-1},\left\{V^{\{34\}}\left(x_3^{-1},x_4^{-1}\right)-x_3\right\}^{-1} \right]\right)+ O(\ln t),
		\label{eqn:-logf234}
\end{align}
$x_2,x_3,x_4\geq 0$. Moreover, from the four-dimensional vine, the pair $(X_2,X_3)$ is modelled by an inverted extreme value copula with exponent measure $V^{\{23\}}$, so by equation~\eqref{eqn:bivariateIEV} we have
\begin{align}
	\ln f_{23}(tx_2,tx_3) = -tV^{\{23\}}\left(x_2^{-1} , x_3^{-1}\right) + O(\ln t).
		\label{eqn:-logf23}
\end{align}
As such, $-\ln c_{14|23}\left\{F_{1|23}(tx_1|tx_2,tx_3),F_{4|23}(tx_4|tx_2,tx_3)\right\}$ is the only term of \eqref{eqn:4Dlogexpansion} left for us to study. \cite{Joe1996} shows that 
\[
	F_{1|23}(x_1|x_2,x_3) = \frac{\partial C_{13|2}\left\{F_{1|2}(x_1|x_2),F_{3|2}(x_3|x_2)\right\}}{\partial F_{3|2}(x_3|x_2)},
\]
and
\[
	F_{4|23}(x_4|x_2,x_3) = \frac{\partial C_{24|3}\left\{F_{2|3}(x_2|x_3),F_{4|3}(x_4|x_3)\right\}}{\partial F_{2|3}(x_2|x_3)}.
\]
Since we focus on inverted extreme value copulas, the derivatives have the form~\eqref{eqn:conditional}, where the arguments $u$ and $v$ are replaced by the appropriate bivariate conditional distributions. We find that for some functions $h_{1|23},h_{4|23}$ not depending on $t$,
{\footnotesize\begin{align}
	&1-F_{1|23}(tx_1\mid tx_2,tx_3) \sim\exp(h_{1|23})\cdot\exp\bigg\{-t \bigg(x_2 - V^{\{23\}}\left(x_2^{-1} , x_3^{-1}\right) + V^{\{13\mid 2\}}\left[\left\{V^{\{12\}}\left(x_1^{-1},x_2^{-1}\right)-x_2\right\}^{-1},\left\{V^{\{23\}}\left(x_2^{-1},x_3^{-1}\right)-x_2\right\}^{-1} \right]\bigg) \bigg\}
	\label{eqn:F1|23asymp}
\end{align}}
and an analogous result for $1-F_{4|23}(tx_4\mid tx_2,tx_3)$. These are similar to the results for the trivariate vine, and applying an argument analogous to~\eqref{eqn:negLogTriVine(c)}, we see that
\begin{align}
-\ln c_{14|23}\left\{F_{1|23}(tx_1|tx_2,tx_3),F_{4|23}(tx_4|tx_2,tx_3)\right\}&\sim\nonumber\\
&\hspace{-6cm}-x_2 + V^{\{23\}}\left(x_2^{-1} , x_3^{-1}\right) - V^{\{13\mid 2\}}\left[\left\{V^{\{12\}}\left(x_1^{-1},x_2^{-1}\right)-x_2\right\}^{-1},\left\{V^{\{23\}}\left(x_2^{-1},x_3^{-1}\right)-x_2\right\}^{-1} \right]\nonumber\\
&\hspace{-6cm}-x_3 + V^{\{23\}}\left(x_2^{-1} , x_3^{-1}\right) - V^{\{24\mid 3\}}\left[\left\{V^{\{23\}}\left(x_2^{-1},x_3^{-1}\right)-x_3\right\}^{-1},\left\{V^{\{34\}}\left(x_3^{-1},x_4^{-1}\right)-x_3\right\}^{-1} \right]\nonumber\\
&\hspace{-6cm}+V^{\{14|23\}}\bigg\{\bigg(x_2 - V^{\{23\}}\left(x_2^{-1} , x_3^{-1}\right) + V^{\{13\mid 2\}}\left[\left\{V^{\{12\}}\left(x_1^{-1},x_2^{-1}\right)-x_2\right\}^{-1},\left\{V^{\{23\}}\left(x_2^{-1},x_3^{-1}\right)-x_2\right\}^{-1} \right]\bigg)^{-1} ,\nonumber\\
	& \hspace{-4cm}\bigg(x_3 - V^{\{23\}}\left(x_2^{-1} , x_3^{-1}\right) + V^{\{24\mid 3\}}\left[\left\{V^{\{23\}}\left(x_2^{-1},x_3^{-1}\right)-x_3\right\}^{-1},\left\{V^{\{34\}}\left(x_3^{-1},x_4^{-1}\right)-x_3\right\}^{-1} \right]\bigg)^{-1} \bigg\}.
\label{eqn:-logc1423}
\end{align}
We can now substitute the results~\eqref{eqn:-logf123},~\eqref{eqn:-logf234},~\eqref{eqn:-logf23}~and~\eqref{eqn:-logc1423} into equation~\eqref{eqn:-logf_4d} to obtain the gauge function for this four-dimensional vine copula as
\begin{align*}
	g(\bm{x}) = &V^{\{23\}}\left(x_2^{-1} , x_3^{-1}\right) \\
	&+V^{\{14|23\}}\bigg\{\bigg(x_2 - V^{\{23\}}\left(x_2^{-1} , x_3^{-1}\right)+ V^{\{13\mid 2\}}\left[\left\{V^{\{12\}}\left(x_1^{-1},x_2^{-1}\right)-x_2\right\}^{-1},\left\{V^{\{23\}}\left(x_2^{-1},x_3^{-1}\right)-x_2\right\}^{-1} \right]\bigg)^{-1} ,\\
	&\hspace{2cm} \bigg(x_3 - V^{\{23\}}\left(x_2^{-1} , x_3^{-1}\right)+  V^{\{24\mid 3\}}\left[\left\{V^{\{23\}}\left(x_2^{-1},x_3^{-1}\right)-x_3\right\}^{-1},\left\{V^{\{34\}}\left(x_3^{-1},x_4^{-1}\right)-x_3\right\}^{-1} \right]\bigg)^{-1} \bigg\}.
\end{align*}
This gauge function can be written more simply in terms of the gauge functions of the three sub-vines highlighted in Fig.~\ref{fig:4DVineSeparated} and the exponent measure corresponding to the pair copula in tree $T_3$. That is,
\begin{align*}
g(\bm{x}) &= g_{\{2,3\}}(x_2,x_3)+ V^{\{14|23\}}\left\{\frac{1}{g_{\{1,2,3\}}(x_1,x_2,x_3) - g_{\{2,3\}}(x_2,x_3)},\frac{1}{g_{\{2,3,4\}}(x_2,x_3,x_4) - g_{\{2,3\}}(x_2,x_3)}\right\}.
\end{align*}

\section{Properties of inverted extreme value copulas}\label{subsec:IEVcopProperties}
\begin{lemma}\label{lemma:copulaDensity}
For the density $c(u,v)$ of an inverted extreme value copula, of the form~\eqref{eqn:IEVcopuladensity}, if $u=1-a_1e^{-b_1t}\{1+o(1)\}$ and $v=1-a_2e^{-b_2t}\{1+o(1)\}$ for some $a_1,a_2,b_1,b_2>0$, then as $t\rightarrow\infty$,
\[
	-\ln c(u,v)\sim t\left\{-b_1-b_2+V\left(1/b_1,1/b_2\right)\right\}.
\]
\end{lemma}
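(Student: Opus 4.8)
The plan is to substitute the prescribed asymptotic forms of $u$ and $v$ directly into the closed-form density~\eqref{eqn:IEVcopuladensity} and to track the order of each resulting term as $t\to\infty$, following exactly the bookkeeping used for the bivariate calculation in~\eqref{eqn:bivariateIEV}. Taking $-\ln$ of~\eqref{eqn:IEVcopuladensity} gives
\[
-\ln c(u,v) = \ln(1-u) + \ln(1-v) + 2\ln\{-\ln(1-u)\} + 2\ln\{-\ln(1-v)\} + V\left\{\tfrac{-1}{\ln(1-u)},\tfrac{-1}{\ln(1-v)}\right\} - \ln\left[V_1 V_2 - V_{12}\right],
\]
where $V_1,V_2,V_{12}$ are all evaluated at $\left(\tfrac{-1}{\ln(1-u)},\tfrac{-1}{\ln(1-v)}\right)$.

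First I would record the elementary consequences of $1-u=a_1e^{-b_1t}\{1+o(1)\}$: namely $\ln(1-u) = -b_1 t + O(1)$, so $-\ln(1-u) = b_1 t\{1+o(1)\}$, $2\ln\{-\ln(1-u)\} = O(\ln t)$, and crucially $\tfrac{-1}{\ln(1-u)} = (b_1 t)^{-1}\{1+o(1)\}$; the same statements hold for $v$ with $b_2$. Hence the first two terms contribute $-(b_1+b_2)t + O(1)$, and the two middle logarithmic terms are $O(\ln t)$. For the $V$ term, homogeneity of order $-1$ gives $V\{(b_1t)^{-1}\{1+o(1)\},(b_2t)^{-1}\{1+o(1)\}\} = t\,V\{b_1^{-1}\{1+o(1)\},b_2^{-1}\{1+o(1)\}\}$, and continuity of $V$ at $(1/b_1,1/b_2)$ upgrades this to $t\{V(1/b_1,1/b_2)+o(1)\} = tV(1/b_1,1/b_2) + o(t)$. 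For the last term, homogeneity of orders $-2,-2,-3$ for $V_1,V_2,V_{12}$ yields $V_1V_2 - V_{12} = t^4\{[V_1V_2](b_1^{-1}\{1+o(1)\},b_2^{-1}\{1+o(1)\}) + O(t^{-1})\}$; since $V_1,V_2<0$ and $V_{12}<0$ on the interior (which is exactly what makes the copula density in~\eqref{eqn:IEVcopuladensity} positive), the bracket converges to the strictly positive limit $[V_1V_2](1/b_1,1/b_2)$, so $\ln[V_1V_2-V_{12}] = 4\ln t + O(1) = O(\ln t)$. Collecting the pieces gives $-\ln c(u,v) = t\{-b_1-b_2+V(1/b_1,1/b_2)\} + O(\ln t)$, which is the claimed equivalence (the leading coefficient being nonzero, since $V(1/b_1,1/b_2)<b_1+b_2$ whenever $H$ does not place all its mass on $\{0,1\}$, by the moment constraint $\int_0^1 w\,dH(w)=1/2$ applied to~\eqref{eqn:exponentMeasure}).

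The only delicate point — the part worth stating carefully rather than "by similar calculations" — is the handling of the multiplicative $\{1+o(1)\}$ errors in $u$ and $v$ once they are pushed through the nonlinear maps $V,V_1,V_2,V_{12}$: one must first extract the explicit power of $t$ via homogeneity, and only then appeal to continuity of these maps at the limiting argument $(1/b_1,1/b_2)$, together with $[V_1V_2](1/b_1,1/b_2)>0$, to guarantee that the $\ln[\,\cdot\,]$ term remains $O(\ln t)$ and does not contaminate the $O(t)$ leading order. Everything else is the same computation as in the derivation of~\eqref{eqn:bivariateIEV} and~\eqref{eqn:negLogTriVine(c)}.
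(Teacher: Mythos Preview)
Your proposal is correct and follows essentially the same route as the paper: expand $-\ln c(u,v)$ from~\eqref{eqn:IEVcopuladensity}, substitute the asymptotic forms of $1-u$ and $1-v$, extract powers of $t$ via the homogeneity of $V,V_1,V_2,V_{12}$, and observe that everything except the $-(b_1+b_2)t$ and $tV(1/b_1,1/b_2)$ contributions is $O(\ln t)$. Your additional remarks on continuity at $(1/b_1,1/b_2)$, on the strict positivity of $V_1V_2$ there, and on the nonvanishing of the leading coefficient are points the paper leaves implicit but which are indeed what makes the $\sim$ statement valid.
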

\begin{proof}
\begin{align*}
	-\ln c(u,v) &= \ln(1-u) + \ln(1-v) + 2\ln\{-\ln(1-u)\} + 2\ln\{-\ln(1-v)\}\\
	&~~~+V\left\{\frac{-1}{\ln(1-u)},\frac{-1}{\ln(1-v)}\right\}-\ln\bigg[V_1\left\{\frac{-1}{\ln(1-u)},\frac{-1}{\ln(1-v)}\right\}\\
	&~~~\hspace{1cm}\cdot V_2\left\{\frac{-1}{\ln(1-u)},\frac{-1}{\ln(1-v)}\right\}-V_{12}\left\{\frac{-1}{\ln(1-u)},\frac{-1}{\ln(1-v)}\right\}\bigg]\\
	&= \ln a_1 + \ln a_2 - (b_1+b_2)t + 2\ln\{1+o(1)\}\\
	&~~~+2\ln[-\ln a_1 + b_1t - \ln\{1+o(1)\}] +2\ln[-\ln a_2 + b_2t - \ln\{1+o(1)\}]\\
	&~~~+tV\left[\frac{1}{-t^{-1}\ln a_1 + b_1 - t^{-1}\ln\{1+o(1)\}},\frac{1}{-t^{-1}\ln a_2 + b_2 - t^{-1}\ln\{1+o(1)\}}\right]\\
	&~~~-\ln\Bigg(t^4V_1\left[\frac{1}{-t^{-1}\ln a_1 + b_1 - t^{-1}\ln\{1+o(1)\}},\frac{1}{-t^{-1}\ln a_2 + b_2 - t^{-1}\ln\{1+o(1)\}}\right]\\
	&~~~\hspace{1cm}\cdot V_2\left[\frac{1}{-t^{-1}\ln a_1 + b_1 - t^{-1}\ln\{1+o(1)\}},\frac{1}{-t^{-1}\ln a_2 + b_2 - t^{-1}\ln\{1+o(1)\}}\right]\\
	&~~~-t^3V_{12}\left[\frac{1}{-t^{-1}\ln a_1 + b_1 - t^{-1}\ln\{1+o(1)\}},\frac{1}{-t^{-1}\ln a_2 + b_2 - t^{-1}\ln\{1+o(1)\}}\right]\Bigg)\\
&\sim t\left\{-b_1-b_2 + V\left(1/b_1,1/b_2\right)\right\}.
\end{align*}
\end{proof}

\begin{lemma}\label{lemma:conditional}
For the conditional distribution function $F(u\mid v)$ of an inverted extreme value copula, of the form~\eqref{eqn:conditional}, if $u=1-a_1e^{-b_1t}\{1+o(1)\}$ and $v=1-a_2e^{-b_2t}\{1+o(1)\}$ for some $a_1,a_2,b_1,b_2>0$, then as $t\rightarrow\infty$,
\[
	F(u\mid v) = 1 - a\exp\left[-t\left\{V(1/b_1,1/b_2)-b_2\right\}\right]\{1+o(1)\},
\]
for some $a>0$.
\end{lemma}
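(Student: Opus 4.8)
The plan is to follow the same route as the proof of Lemma~\ref{lemma:copulaDensity}: substitute the hypothesised asymptotic forms of $1-u$ and $1-v$ directly into the closed-form expression~\eqref{eqn:conditional} for $F(u\mid v)$, expand each factor in powers of $t$ using the homogeneity of the exponent measure and its derivatives, and collect the dominant exponential term. The calculation is in fact slightly cleaner than that of Lemma~\ref{lemma:copulaDensity}, since~\eqref{eqn:conditional} is already a product of elementary factors and no logarithmic terms need to be tracked; the price is that we must retain the factors to an accuracy sufficient to identify the limiting constant $a$.

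First I would record the expansions of the logarithmic transforms of the arguments. From $1-u=a_1e^{-b_1t}\{1+o(1)\}$ we obtain $-\ln(1-u)=b_1t-\ln a_1-\ln\{1+o(1)\}$, so that $-1/\ln(1-u)=(b_1t)^{-1}\{1+\rho_1(t)\}$ with $\rho_1(t)=O(1/t)$ and, crucially, $t\rho_1(t)\to b_1^{-1}\ln a_1$; the analogous statements hold for the second argument with $b_2,a_2,\rho_2$ in place of $b_1,a_1,\rho_1$. Then I would treat the four factors of~\eqref{eqn:conditional} in turn: $1/(1-v)=a_2^{-1}e^{b_2t}\{1+o(1)\}$ and $\{-\ln(1-v)\}^{-2}=(b_2t)^{-2}\{1+o(1)\}$ are immediate; using that $V_2$ is homogeneous of order $-2$ together with its continuity, $V_2\{-1/\ln(1-u),-1/\ln(1-v)\}=t^2V_2(1/b_1,1/b_2)\{1+o(1)\}$; and, using that $V$ is homogeneous of order $-1$ together with a first-order Taylor expansion of $V$ about $(1/b_1,1/b_2)$ (valid since $V$ is differentiable), $V\{-1/\ln(1-u),-1/\ln(1-v)\}=tV(1/b_1,1/b_2)+\kappa+o(1)$, where $\kappa=b_1^{-2}\ln a_1\,V_1(1/b_1,1/b_2)+b_2^{-2}\ln a_2\,V_2(1/b_1,1/b_2)$. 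Multiplying the four factors, the polynomial pieces $(b_2t)^{-2}$ and $t^2$ cancel, the exponentials combine into $\exp[-t\{V(1/b_1,1/b_2)-b_2\}]$, and the remaining terms tend to a constant, giving $1-F(u\mid v)=a\{1+o(1)\}\exp[-t\{V(1/b_1,1/b_2)-b_2\}]$ with $a=-a_2^{-1}b_2^{-2}V_2(1/b_1,1/b_2)e^{-\kappa}$. Since a general exponent measure is non-increasing in each coordinate, $V_2\le0$, so $a>0$ provided $V_2(1/b_1,1/b_2)\neq0$, which is the case whenever the spectral measure of the underlying extreme value distribution is not degenerate; this is precisely the claimed form.

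The only delicate point is ensuring that the non-exponential prefactor converges to a genuine constant rather than merely staying bounded, which is why I would expand $-1/\ln(1-u)$ to the accuracy $(b_1t)^{-1}\{1+O(1/t)\}$ with the explicit limit $t\rho_i(t)\to b_i^{-1}\ln a_i$, and feed this into a first-order Taylor expansion of $V$ so that the $O(1)$ correction in the exponent of $\exp[-V(\cdot)]$ converges to $\kappa$; here it is important that $t\rho_i(t)\to b_i^{-1}\ln a_i$ for an \emph{arbitrary} $o(1)$ in the hypothesis, which follows because the slowly-varying correction $\ln\{1+o(1)\}$ enters only additively in $-\ln(1-u)$ and vanishes in the limit. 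For the factor $V_2(\cdot)$ only continuity of $V_2$ is needed, since its leading order $t^2V_2(1/b_1,1/b_2)$ already carries all of the $t$-dependence we track. As a consistency check, when $a_1=a_2=1$ and $b_i=x_i$ one has $\kappa=0$ and $a=-x_2^{-2}V_2(x_1^{-1},x_2^{-1})$, recovering the exact identity $1-F_{1\mid2}(tx_1\mid tx_2)=-x_2^{-2}V_2^{\{12\}}(x_1^{-1},x_2^{-1})\exp[t\{x_2-V^{\{12\}}(x_1^{-1},x_2^{-1})\}]$ used in Section~\ref{SMsec:triIEVproof}.
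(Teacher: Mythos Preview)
Your proposal is correct and follows essentially the same route as the paper's proof: direct substitution of the asymptotic forms of $1-u$ and $1-v$ into~\eqref{eqn:conditional}, followed by exploitation of the homogeneity of $V$ and $V_2$ to isolate the dominant exponential $\exp[-t\{V(1/b_1,1/b_2)-b_2\}]$. Your explicit first-order Taylor expansion of $V$ in the exponent, yielding the correction $\kappa$, is exactly what is needed to pin down the constant $a$ (the paper leaves this step implicit), and your expression for $a$ agrees with the paper's after using the homogeneity identities $b_2^{-2}V_2(1/b_1,1/b_2)=V_2(b_2/b_1,1)$ and $b_1^{-2}V_1(1/b_1,1/b_2)=V_1(1,b_1/b_2)$.
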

\begin{proof}
\begin{align*}
	&F(u\mid v) = 1 + \left(\frac{1}{1-v}\right)\left\{-\ln(1-v)\right\}^{-2}V_2\left\{\frac{-1}{\ln(1-u)},\frac{-1}{\ln(1-v)}\right\}\exp\left[-V\left\{\frac{-1}{\ln(1-u)},\frac{-1}{\ln(1-v)}\right\}\right]\\
	&=1 + \frac{e^{b_2t}}{a_2\{1+o(1)\}}\left[-\ln a_2 + b_2t - \ln\{1+o(1)\}\right]^{-2}V_2\left[\frac{1}{-\ln a_1 + b_1t -\ln\{1+o(1)\}},\frac{1}{-\ln a_2 + b_2t -\ln\{1+o(1)\}}\right]\\
	&\hspace{1cm} \cdot \exp\left(-V\left[\frac{1}{-\ln a_1 + b_1t -\ln\{1+o(1)\}},\frac{1}{-\ln a_2 + b_2t -\ln\{1+o(1)\}}\right]\right)\\
	&=1 + \frac{e^{b_2t}}{a_2\{1+o(1)\}}\left[-t^{-1}\ln a_2 + b_2 - t^{-1}\ln\{1+o(1)\}\right]^{-2}\\
	&\hspace{1cm} \cdot V_2\left[\frac{1}{-t^{-1}\ln a_1 + b_1 -t^{-1}\ln\{1+o(1)\}},\frac{1}{-t^{-1}\ln a_2 + b_2 -t^{-1}\ln\{1+o(1)\}}\right]\\
	&\hspace{1cm} \cdot \exp\left(-tV\left[\frac{1}{-t^{-1}\ln a_1 + b_1 -t^{-1}\ln\{1+o(1)\}},\frac{1}{-t^{-1}\ln a_2 + b_2 -t^{-1}\ln\{1+o(1)\}}\right]\right)\\
	&= 1 - a\exp\left[-t\left\{V(1/b_1,1/b_2)-b_2\right\}\right]\{1+o(1)\},
\end{align*}
for $a = -a_2^{-1}V_2(b_2/b_1,1)\exp\left\{-\ln a_1V_1\left(1,b_1/b_2\right) -\ln a_2V_2\left(b_2/b_1,1\right)\right\}$.
\end{proof}

\section{Proof of result~\eqref{eqn:DvineEtaD}}\label{app:DvineEtaD_proof}
\begin{prop}\label{prop:dVineresult}
For any even value of $n\geq 4$,
\[
\left\{2^\alpha\sum_{k=1}^{n/2}(2^\alpha-1)^{2(k-1)}\right\} - 1 - \left\{2^\alpha\sum_{k=1}^{(n-2)/2}(2^\alpha-1)^{2(k-1)+1}\right\} = (2^\alpha-1)^{n-1}.
\]
\end{prop}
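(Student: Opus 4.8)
The plan is to prove the identity directly by evaluating the two finite geometric sums in closed form, rather than by induction. Write $y = 2^\alpha - 1$, which lies in $(0,1)$ since $\alpha\in(0,1)$; in particular $y\neq 0$ and $y^2\neq 1$, so the standard geometric-sum formula applies. Both sums appearing on the left-hand side are geometric with common ratio $y^2$: reindexing with $j=k-1$ gives
\[
\sum_{k=1}^{n/2} y^{2(k-1)} = \sum_{j=0}^{n/2-1}(y^2)^j = \frac{y^n-1}{y^2-1},\qquad
\sum_{k=1}^{(n-2)/2} y^{2(k-1)+1} = y\sum_{j=0}^{(n-4)/2}(y^2)^j = \frac{y\,(y^{n-2}-1)}{y^2-1}.
\]

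Substituting these into the left-hand side, it becomes
\[
\frac{2^\alpha\bigl[(y^n-1)-y(y^{n-2}-1)\bigr]}{y^2-1}-1
=\frac{2^\alpha\bigl[y^n-y^{n-1}+y-1\bigr]}{y^2-1}-1 .
\]
The bracketed polynomial factors as $y^{n-1}(y-1)+(y-1)=(y-1)(y^{n-1}+1)$, and since $y^2-1=(y-1)(y+1)$ with $y+1=2^\alpha$, the whole fraction collapses to $y^{n-1}+1$. Subtracting the $1$ yields $y^{n-1}=(2^\alpha-1)^{n-1}$, which is exactly the claimed right-hand side.

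There is essentially no obstacle here; the only points that warrant care are bookkeeping ones. First, one must track the upper limits of the two sums so that the second sum genuinely contributes $y^{n-2}$ (not $y^n$) after pulling out the leading factor of $y$ — i.e.\ $\sum_{k=1}^{(n-2)/2} y^{2(k-1)} = (y^{n-2}-1)/(y^2-1)$. Second, it is worth recording explicitly that the hypothesis $\alpha\in(0,1)$ excludes the degenerate cases $y=0$ (at $\alpha=0$) and $y=\pm1$ (at $\alpha=1$) in which $y^2-1=0$ and the geometric-sum identity would break down. As an alternative one could argue by induction on the even index $n$, starting from the base case $n=4$ and using $\sum_{k=1}^{(n+2)/2} y^{2(k-1)} = \sum_{k=1}^{n/2} y^{2(k-1)} + y^{n}$ together with the analogous step for the odd-power sum; but the closed-form evaluation above is shorter and keeps the argument self-contained.
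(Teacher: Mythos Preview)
Your proof is correct. It differs from the paper's argument: the paper proceeds by induction on even $n$, verifying the base case $n=4$ and then showing that the identity at $n=2m$ implies the identity at $n=2(m+1)$ via the telescoping increment $2^\alpha\{(2^\alpha-1)^{2m}-(2^\alpha-1)^{2m-1}\}$. Your approach instead closes both geometric sums explicitly and cancels the common factor $(y-1)$ against $y^2-1$, using $y+1=2^\alpha$ to collapse the fraction. Your route is shorter and more transparent, and it actually yields the simplified closed forms that the paper only derives separately afterwards; the paper's induction has the minor advantage of never dividing by $y^2-1$, so it needs no side remark about excluding $\alpha\in\{0,1\}$. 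You correctly flag that caveat and handle the bookkeeping on the upper limits, so nothing is missing.
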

\begin{proof}
We first show this is true for $n=4$. We have
\begin{align*}
\left\{2^\alpha\sum_{k=1}^{2}(2^\alpha-1)^{2(k-1)}\right\} - 1 - \left\{2^\alpha\sum_{k=1}^{1}(2^\alpha-1)^{2(k-1)+1}\right\}&=2^\alpha\left\{(2^\alpha-1)^0+(2^\alpha-1)^2-(2^\alpha-1)^1\right\}-1\\
&=2^\alpha - 1 + 2^\alpha(2^\alpha-1)\left\{(2^\alpha-1)-1\right\}\\
&=(2^\alpha-1)\left\{-2^\alpha+1+2^\alpha(2^\alpha-1)\right\}\\
&=(2^\alpha-1)^3.
\end{align*}
Now we assume the result holds for some $n=2m$ with $m\in\mathbb{Z}$ and $m\geq 2$, and show the result also holds for $n=2(m+1)$. We have
\begin{align*}
\left\{2^\alpha\sum_{k=1}^{m+1}(2^\alpha-1)^{2(k-1)}\right\} &- 1 - \left\{2^\alpha\sum_{k=1}^{m}(2^\alpha-1)^{2(k-1)+1}\right\}\\
&= \left\{2^\alpha\sum_{k=1}^{m}(2^\alpha-1)^{2(k-1)}\right\} - 1 - \left\{2^\alpha\sum_{k=1}^{m-1}(2^\alpha-1)^{2(k-1)+1}\right\} + 2^\alpha\left\{(2^\alpha-1)^{2m}-(2^\alpha-1)^{2m-1}\right\}\\
&=(2^\alpha-1)^{2m-1}+ 2^\alpha\left\{(2^\alpha-1)^{2m}-(2^\alpha-1)^{2m-1}\right\}\\
&=(2^\alpha-1)^{2m-1}\left\{1-2^\alpha+2^\alpha(2^\alpha-1)\right\}\\
&=(2^\alpha-1)^{2m+1}.
\end{align*}
Hence the result is proved by induction.
\end{proof}
\noindent To prove result \eqref{eqn:DvineEtaD}, we first show that it holds for $d=3$ and $d=4$. In the former case, we have 
\begin{align*}
	\eta_\mathcal{D} &= \left\{1 + 2^\alpha\sum_{k=1}^1 (2^\alpha-1)^{2(k-1)+1}\right\}^{-1}= \left\{1 + 2^\alpha(2^\alpha-1)\right\}^{-1},
\end{align*}
as obtained by setting $\beta=\gamma=\alpha$ in equation~\eqref{eqn:ilogilogilogGauge}, and in the latter case we have
\begin{align*}
	\eta_\mathcal{D} &= \left\{2^\alpha\sum_{k=1}^2 (2^\alpha-1)^{2(k-1)}\right\}^{-1}= \left\{2^\alpha+2^\alpha(2^\alpha-1)^2\right\}^{-1},
\end{align*}
as achieved by exploiting equation~\eqref{eqn:DvineEtaD.1}. We now assume the result holds for some $d=n-1$ and $d=n$ being odd and even, respectively, and show that it also holds for $d=n+1,n+2$. To make the notation clearer in the proof, we let $\eta_{\mathcal{D},(n)}$ denote the value of $\eta_\mathcal{D}$ for an $n$-dimensional $D$-vine copula with inverted logistic components with parameter $\alpha$. From equation~\eqref{eqn:DvineEtaD.1}, we have
\[
	\eta_{\mathcal{D},(n+1)} = \left\{ \eta_{\mathcal{D},(n-1)}^{-1} + 2^\alpha\left(\eta_{\mathcal{D},(n)}^{-1} -\eta_{\mathcal{D},(n-1)}^{-1}\right)\right\}^{-1};
\]
\[
	\eta_{\mathcal{D},(n+2)} = \left\{ \eta_{\mathcal{D},(n)}^{-1} + 2^\alpha\left(\eta_{\mathcal{D},(n+1)}^{-1} -\eta_{\mathcal{D},(n)}^{-1}\right)\right\}^{-1}.
\]
Under the assumption that~\eqref{eqn:DvineEtaD} holds for $d=n-1,n$, we have
\begin{align*}
\eta_{\mathcal{D},(n+1)} &= \Bigg[ 1 + 2^\alpha\sum_{k=1}^{(n-2)/2} \left(2^\alpha-1\right)^{2(k-1)+1} + 2^\alpha\left\{2^\alpha\sum_{k=1}^{n/2}\left(2^\alpha-1\right)^{2(k-1)} -1 - 2^\alpha\sum_{k=1}^{(n-2)/2} \left(2^\alpha-1\right)^{2(k-1)+1} \right\}\Bigg]^{-1}\\
&= \left\{ 1 + 2^\alpha\sum_{k=1}^{(n-2)/2} \left(2^\alpha-1\right)^{2(k-1)+1}+ 2^\alpha(2^\alpha-1)^{n-1}\right\}^{-1}~~~\text{(by Proposition~\ref{prop:dVineresult})}\\
&= \left\{ 1 + 2^\alpha\sum_{k=1}^{n/2} \left(2^\alpha-1\right)^{2(k-1)+1}\right\}^{-1}.
\end{align*}
Further, we see that
\begin{align*}
\eta_{\mathcal{D},(n+2)} &= \Bigg[2^\alpha\sum_{k=1}^{n/2}\left(2^\alpha-1\right)^{2(k-1)}+2^\alpha\left\{1 + 2^\alpha\sum_{k=1}^{n/2} \left(2^\alpha-1\right)^{2(k-1)+1} - 2^\alpha\sum_{k=1}^{n/2}\left(2^\alpha-1\right)^{2(k-1)}\right\}\Bigg]^{-1}\\
&=\left[2^\alpha\sum_{k=1}^{n/2}\left(2^\alpha-1\right)^{2(k-1)}+2^\alpha\left\{1+2^\alpha(2^\alpha-2)\sum_{k=1}^{n/2}\left(2^\alpha-1\right)^{2(k-1)}\right\}
\right]^{-1}\\
&=\left(2^\alpha\sum_{k=1}^{n/2}\left(2^\alpha-1\right)^{2(k-1)}+2^\alpha\left[1+2^\alpha(2^\alpha-2)\sum_{k=0}^{n/2-1}\left\{\left(2^\alpha-1\right)^{2}\right\}^k\right]\right)^{-1}\\
&=\left[2^\alpha\sum_{k=1}^{n/2}\left(2^\alpha-1\right)^{2(k-1)}+2^\alpha\left\{1+2^\alpha(2^\alpha-2)\frac{1-(2^\alpha-1)^n}{1-(2^\alpha-1)^2}\right\}
\right]^{-1}\\
&=\left[2^\alpha\sum_{k=1}^{n/2}\left(2^\alpha-1\right)^{2(k-1)}+2^\alpha\left\{1+2^\alpha(2^\alpha-2)\frac{1-(2^\alpha-1)^n}{2^\alpha(2-2^\alpha)}\right\}
\right]^{-1}\\
&=\left\{2^\alpha\sum_{k=1}^{n/2}\left(2^\alpha-1\right)^{2(k-1)}+2^\alpha\left(2^\alpha-1\right)^n
\right\}^{-1}\\
&=\left\{2^\alpha\sum_{k=1}^{(n+2)/2}\left(2^\alpha-1\right)^{2(k-1)}\right\}^{-1}.
\end{align*}
Hence result~\eqref{eqn:DvineEtaD} is proved by induction.\\

Result~\eqref{eqn:DvineEtaD} can be simplified further by exploiting sums of geometric series. For the case where $d$ is odd, we have
\begin{align*}
\left\{1 + 2^\alpha\sum_{k=1}^{(d-1)/2} \left(2^\alpha-1\right)^{2(k-1)+1}\right\}^{-1}
= \left[1 + 2^\alpha(2^\alpha-1)\sum_{k=0}^{\frac{d-1}{2} -1} \left\{\left(2^\alpha-1\right)^2\right\}^k\right]^{-1}&= \left[1 + 2^\alpha(2^\alpha-1)\left\{\frac{1-(2^\alpha-1)^{d-1}}{1-(2^\alpha-1)^2}\right\}\right]^{-1}\\
&=\left[1 + \frac{2^\alpha-1}{2-2^\alpha}\left\{1-(2^\alpha-1)^{d-1}\right\}\right]^{-1},
\end{align*}
and for $d$ even,
\begin{align*}
&\left\{ 2^\alpha\sum_{k=1}^{d/2}\left(2^\alpha-1\right)^{2(k-1)} \right\}^{-1}
=\left[ 2^\alpha\sum_{k=0}^{\frac{d}{2}-1}\left\{\left(2^\alpha-1\right)^2\right\}^k \right]^{-1}=\left[ 2^\alpha\left\{\frac{1-(2^\alpha-1)^d}{1-(2^\alpha-1)^2}\right\} \right]^{-1}
= \left[\frac{1}{2-2^\alpha}\left\{1-(2^\alpha-1)^d\right\}\right]^{-1}.
\end{align*}

\section{Properties of extreme value copulas}\label{SM:evProperties}
\subsection{Some properties of the exponent measure}
\begin{prop}\label{prop:V2regvar(a)}
If the exponent measure $V$ has corresponding spectral density $h(w)$ placing no mass on $\{0\}$, and with $h(w)\sim c_2w^{s_2}$ as $w\searrow 0$, for some $c_2\in\mathbb{R}$ and $s_2>-1$, then
\[
	V(r,1) = r^{-1} + \frac{2c_2}{(s_2+1)(s_2+2)}r^{s_2+1}\{1+o(1)\};
\]
\[
	-V_2(r,1)\sim \frac{2c_2}{(s_2+1)}r^{s_2+1}, ~~~ \text{as $r\rightarrow 0$}.
\]
\end{prop}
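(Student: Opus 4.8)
The plan is to work directly from the integral representation $V(r,1) = 2\int_0^1 \max(w/r,\, 1-w)\, dH(w)$ and to split the integral at the crossover point where $w/r = 1-w$, i.e.\ at $w^* = w^*(r) = r/(1+r)$. For $w \ge w^*$ the maximum equals $w/r$ and for $w < w^*$ it equals $1-w$, so
\[
V(r,1) = \frac{2}{r}\int_{w^*}^1 w\, dH(w) + 2\int_0^{w^*}(1-w)\, dH(w).
\]
Using the moment constraint $\int_0^1 w\, dH(w) = 1/2$, the first term becomes $r^{-1} - \tfrac{2}{r}\int_0^{w^*} w\, dH(w)$, so everything reduces to the behaviour of $\int_0^{w^*} w\, dH(w)$ and $\int_0^{w^*}(1-w)\, dH(w)$ as $r\to 0$; note $w^* \sim r \to 0$, so only the behaviour of $H$ near $0$ (where it has density $h$) is relevant.

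Next I would invoke a standard estimate: since $h(w) \sim c_2 w^{s_2}$ as $w\searrow 0$, for any $\delta>0$ there is $\epsilon_0$ with $|h(w) - c_2 w^{s_2}| \le \delta|c_2| w^{s_2}$ on $(0,\epsilon_0)$, and hence $\int_0^{\epsilon} w^{k} h(w)\,dw = \frac{c_2}{s_2+k+1}\,\epsilon^{\,s_2+k+1}\{1+o(1)\}$ for $k=0,1$ and any $\epsilon\to 0$ (this uses $s_2 > -1$ for convergence). Applying this with $\epsilon = w^*$, and using $(w^*)^{s_2+1} = r^{s_2+1}\{1+o(1)\}$, $(w^*)^{s_2+2} = r^{s_2+2}\{1+o(1)\}$ since $w^* = r/(1+r)$, gives $\int_0^{w^*} w\, dH(w) = \frac{c_2}{s_2+2}r^{s_2+2}\{1+o(1)\}$ and $\int_0^{w^*}(1-w)\, dH(w) = \frac{c_2}{s_2+1}r^{s_2+1}\{1+o(1)\}$ (the $-\int_0^{w^*} w\, dH$ piece being one power of $r$ smaller). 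Substituting and combining the two correction terms yields the coefficient $2c_2\left(\tfrac{1}{s_2+1} - \tfrac{1}{s_2+2}\right) = \tfrac{2c_2}{(s_2+1)(s_2+2)}$, which is the first claim.

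For the second claim I would compute $V_2$ from $V(r,y) = 2\int_0^1 \max(w/r,(1-w)/y)\, dH(w)$ by differentiating in $y$: the integrand contributes $-(1-w)/y^2$ on the set where $(1-w)/y > w/r$ and $0$ elsewhere, and since the integrand is continuous across the crossover the Leibniz boundary terms cancel, so at $y=1$ we get $V_2(r,1) = -2\int_0^{w^*}(1-w)\, dH(w)$. This is exactly the second integral already analysed, so $-V_2(r,1) = 2\int_0^{w^*}(1-w)\, dH(w) \sim \frac{2c_2}{s_2+1}r^{s_2+1}$, as required. As a consistency check one can verify Euler's relation $rV_1(r,1) + V_2(r,1) = -V(r,1)$ using $V_1(r,1) = -\frac{2}{r^2}\int_{w^*}^1 w\, dH(w)$.

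The main obstacle is the mildly delicate step of integrating the local asymptotic $h(w)\sim c_2 w^{s_2}$ against $w^k$ over the shrinking interval $(0,w^*)$ with the error controlled as an $o(\cdot)$ term, together with the harmless replacement of $w^*$ by $r$; everything else is bookkeeping with the crossover point and the moment constraint. It is worth remarking explicitly that, because $w^*\to 0$, no assumption on $H$ away from $0$ is used, so the result holds even when $H$ has atoms or no density elsewhere on $[0,1]$.
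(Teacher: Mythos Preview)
Your argument is correct and follows essentially the same route as the paper: split the integral defining $V$ at the crossover point $w^*=r/(1+r)$, use the moment constraint to extract the $r^{-1}$ term, and estimate the remaining integrals over $(0,w^*)$ via the local behaviour $h(w)\sim c_2 w^{s_2}$ (the paper phrases this step as an application of Karamata's theorem, which amounts to the same elementary estimate you wrote out).

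The one genuine difference is in how $V_2$ is handled. The paper obtains $V_2$ by differentiating the asymptotic expansion of $V(x_1,x_2)$ with respect to $x_2$ and then specialising; this is formally quick but relies on differentiating an $o(\cdot)$ term. Your route --- differentiating under the integral sign first to obtain $V_2(r,1)=-2\int_0^{w^*}(1-w)\,dH(w)$ exactly, and only then inserting the already-established integral estimate --- is cleaner and avoids that issue. Either way the answer is the same.
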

\begin{proof}
By the definition of the exponent measure,
\begin{align*}
V(x_1,x_2) = \frac{2}{x_1}\int_{0}^{1}wdH(w) - \frac{2}{x_1}\int^{\frac{x_1}{x_1+x_2}}_{0}wh(w)dw + \frac{2}{x_2}\int^{\frac{x_1}{x_1+x_2}}_{0}(1-w)h(w)dw.
\end{align*}
For $x_1\rightarrow 0$ and $x_2=O(1)$, by Karamata's theorem, we have
\begin{align}
	V(x_1,x_2) &= \frac{1}{x_1} - \frac{2c_{2}}{x_1(s_2+2)}\left(\frac{x_1}{x_1+x_2}\right)^{s_{2}+2} \{1+o(1)\} + \frac{2c_2}{x_2(s_{2}+1)}\left(\frac{x_1}{x_1+x_2}\right)^{s_{2}+1} \{1+o(1)\}\nonumber\\
		&= \frac{1}{x_1} + 2c_{2}\left(\frac{x_1}{x_1+x_2}\right)^{s_{2}+1}\left\{\frac{1}{x_2(s_2+1)}-\frac{1}{(s_2+2)(x_1+x_2)}\right\}  \{1+o(1)\}\nonumber\\
		&=\frac{1}{x_1} + \frac{2c_2x_1^{s_2+1}x_2^{-(s_2+2)}}{(s_2+1)(s_2+2)}\{1+o(1)\}.
\label{eqn:karamata(a)}
\end{align}
Hence, we have
\[
	V(r,1) = \frac{1}{r} + \frac{2c_2}{(s_2+1)(s_2+2)}r^{s_2+1}\{1+o(1)\}, ~~~\text{as $r\rightarrow 0$}.
\]
Moreover, differentiating expression~\eqref{eqn:karamata(a)} with respect to $x_2$ gives
\[
	V_2(x_1,x_2) = -\frac{2c_2x_1^{s_2+1}x_2^{-(s_2+3)}}{(s_2+1)}\{1+o(1)\},
\]
and we can therefore infer that
\[
	-V_2(r,1) = \frac{2c_{2}}{(s_{2}+1)}r^{s_2+1}\{1+o(1)\},~~~\text{as $r\rightarrow 0$}.
\]
\end{proof}

\begin{prop}\label{prop:V2regvar(b)}
If the exponent measure $V$ has corresponding spectral density $h(w)$ placing no mass on $\{1\}$, and with $h(w)\sim c_1(1-w)^{s_1}$ as $w\nearrow 1$, for some $c_1\in\mathbb{R}$ and $s_1>-1$, then 
\[
	V(r,1) = 1 + \frac{2c_1}{(s_1+1)(s_1+2)}r^{-(s_1+2)}\{1+o(1)\};
\]
\[
	-V_2(r,1) = 1 - \frac{2c_{1}}{(s_{1}+2)}r^{-(s_1+2)}\{1+o(1)\},~~~\text{as $r\rightarrow\infty$}.
\]
\end{prop}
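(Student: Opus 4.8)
\medskip
\noindent\textbf{Proof proposal.} The plan is to deduce Proposition~\ref{prop:V2regvar(b)} from Proposition~\ref{prop:V2regvar(a)} by exploiting the reflection symmetry $w\mapsto 1-w$ of the spectral representation~\eqref{eqn:exponentMeasure} together with the homogeneity of order $-1$ of $V$. First I would introduce the reflected spectral measure $\tilde H$ given by $\tilde H(A)=H(1-A)$, with density $\tilde h(w)=h(1-w)$. Since $\int_0^1 w\,d\tilde H(w)=\int_0^1(1-w)\,dH(w)=1/2$, the measure $\tilde H$ satisfies the moment constraint, so it defines a valid exponent measure $\tilde V$ through~\eqref{eqn:exponentMeasure}; substituting $w\mapsto 1-w$ in~\eqref{eqn:exponentMeasure} gives the identity $V(x_1,x_2)=\tilde V(x_2,x_1)$. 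Moreover, by the hypothesis $h(w)\sim c_1(1-w)^{s_1}$ as $w\nearrow 1$ we have $\tilde h(w)\sim c_1 w^{s_1}$ as $w\searrow 0$, so $\tilde V$ satisfies the assumptions of Proposition~\ref{prop:V2regvar(a)} with $(c_2,s_2)$ there replaced by $(c_1,s_1)$.

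The next step is to combine this with homogeneity. Writing $V(r,1)=\tilde V(1,r)=r^{-1}\tilde V(r^{-1},1)$ and noting $r^{-1}\to 0$ as $r\to\infty$, expansion~\eqref{eqn:karamata(a)} (applied to $\tilde V$) gives
\[
V(r,1)=r^{-1}\Bigl\{r+\tfrac{2c_1}{(s_1+1)(s_1+2)}\,r^{-(s_1+1)}\{1+o(1)\}\Bigr\}=1+\tfrac{2c_1}{(s_1+1)(s_1+2)}\,r^{-(s_1+2)}\{1+o(1)\},
\]
which is the first claim. For the derivative, homogeneity gives $V_2(r,1)=\partial_{x_2}\tilde V(x_2,r)\big|_{x_2=1}=r^{-2}\tilde V_1(r^{-1},1)$, so it suffices to expand $\tilde V$ near $0$ in its \emph{first} argument. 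Differentiating the joint expansion~\eqref{eqn:karamata(a)} (for $\tilde V$) with respect to $x_1$ yields $\tilde V_1(x_1,x_2)=-x_1^{-2}+\tfrac{2c_1}{s_1+2}x_1^{s_1}x_2^{-(s_1+2)}\{1+o(1)\}$ as $x_1\to 0$; setting $x_1=r^{-1}$, $x_2=1$ and multiplying by $r^{-2}$ gives $V_2(r,1)=-1+\tfrac{2c_1}{s_1+2}r^{-(s_1+2)}\{1+o(1)\}$, i.e.\ the second claim.

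The only delicate point is the term-by-term differentiation of the $\{1+o(1)\}$ expansion~\eqref{eqn:karamata(a)}, but this is precisely the step already performed in the proof of Proposition~\ref{prop:V2regvar(a)} (where~\eqref{eqn:karamata(a)} is differentiated in $x_2$), so no genuinely new obstacle arises; the argument is essentially bookkeeping. As a self-contained alternative I would note the direct route: differentiating the representation $V(x_1,x_2)=\tfrac1{x_1}-\tfrac2{x_1}\int_0^{x_1/(x_1+x_2)} w h(w)\,dw+\tfrac2{x_2}\int_0^{x_1/(x_1+x_2)}(1-w)h(w)\,dw$ from the proof of Proposition~\ref{prop:V2regvar(a)} in $x_2$, the boundary terms at the crossover $w=x_1/(x_1+x_2)$ cancel, leaving $V_2(r,1)=-2\int_0^{r/(r+1)}(1-w)h(w)\,dw$, hence $-V_2(r,1)=1-2\int_{r/(r+1)}^1(1-w)h(w)\,dw$; one application of Karamata's theorem to $(1-w)h(w)\sim c_1(1-w)^{s_1+1}$ as $w\nearrow 1$ gives the stated asymptotic, and splitting the two integrals in the displayed formula for $V(r,1)$ as $\int_0^1-\int_{r/(r+1)}^1$ and applying Karamata once more recovers the expansion of $V(r,1)$. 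I would present the symmetry argument as the main line and mention the direct computation as a remark.
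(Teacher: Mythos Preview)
Your proposal is correct, and the symmetry argument you lead with is a genuinely different route from the paper's own proof. The paper simply repeats the direct computation of Proposition~\ref{prop:V2regvar(a)} with the roles of the two endpoints swapped: it rewrites $V(x_1,x_2)$ using the integrals $\int_{x_1/(x_1+x_2)}^1(1-w)h(w)\,dw$ and $\int_{x_1/(x_1+x_2)}^1 w h(w)\,dw$, applies Karamata's theorem for $x_1\to\infty$ and $x_2=o(x_1)$ to get an expansion analogous to~\eqref{eqn:karamata(a)}, evaluates at $(r,1)$, and then differentiates that expansion in $x_2$. Your alternative ``direct route'' remark is essentially this same calculation, so you have in fact recovered the paper's argument as your backup.

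What your main line buys is economy: by observing that the reflected spectral measure $\tilde H$ is again admissible and that $V(x_1,x_2)=\tilde V(x_2,x_1)$, you reduce Proposition~\ref{prop:V2regvar(b)} to Proposition~\ref{prop:V2regvar(a)} plus homogeneity, with no second pass through Karamata. One small caveat: you need $\tilde V_1$ near $0$, whereas Proposition~\ref{prop:V2regvar(a)} only records $\tilde V_2$; the term-by-term differentiation of~\eqref{eqn:karamata(a)} in $x_1$ that you perform is of exactly the same character as the $x_2$-differentiation the paper already takes for granted, so this is not a new gap, but it is not \emph{literally} the step from Proposition~\ref{prop:V2regvar(a)} either. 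If you want the symmetry argument to be fully self-contained, the cleanest fix is to note that $V_2(r,1)=\tilde V_1(1,r)$ and then invoke, by symmetry, the $-\tilde V_1(1,r)$ analogue of the $-V_2(r,1)$ conclusion in Proposition~\ref{prop:V2regvar(a)}, which follows from the same Karamata step applied to the mirrored decomposition.
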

\begin{proof}
By the definition of the exponent measure,
\begin{align*}
V(x_1,x_2) = \frac{2}{x_2}\int_{0}^{1}(1-w)dH(w) - \frac{2}{x_2}\int_{\frac{x_1}{x_1+x_2}}^{1}(1-w)h(w)dw + \frac{2}{x_1}\int_{\frac{x_1}{x_1+x_2}}^{1}wh(w)dw.
\end{align*}
For $x_1\rightarrow\infty$ and $x_2=o(x_1)$, by Karamata's theorem, we have
\begin{align}
	V(x_1,x_2) &= \frac{1}{x_2} - \frac{2c_{1}}{x_2(s_{1}+2)}\left(\frac{x_2}{x_1+x_2}\right)^{s_{1}+2} \{1+o(1)\}+ \frac{2c_{1}}{x_1(s_{1}+1)}\left(\frac{x_2}{x_1+x_2}\right)^{s_{1}+1} \{1+o(1)\}\nonumber\\
		&= \frac{1}{x_2} + 2c_{1}\left(\frac{x_2}{x_1+x_2}\right)^{s_{1}+1}\left\{\frac{1}{x_1(s_{1}+1)}-\frac{1}{(s_{1}+2)(x_1+x_2)}\right\}  \{1+o(1)\}\nonumber\\
		&=\frac{1}{x_2} + \frac{2c_{1}x_2^{s_{1}+1}x_1^{-(s_{1}+2)}}{(s_{1}+1)(s_{1}+2)}\{1+o(1)\}.
\label{eqn:karamata(b)}
\end{align}
Hence, we have
\[
	V(r,1) = 1 + \frac{2c_{1}}{(s_{1}+1)(s_{1}+2)}r^{-(s_{1}+2)}\{1+o(1)\},~~~\text{as $r\rightarrow\infty$}.
\]
Moreover, differentiating expression~\eqref{eqn:karamata(b)} with respect to $x_2$ gives
\[
	V_2(x_1,x_2) = -\frac{1}{x_2^2} + \frac{2c_{1}x_2^{s_{1}}x_1^{-(s_{1}+2)}}{(s_{1}+2)}\{1+o(1)\},
\]
and we can therefore infer that
\[
	-V_2(r,1) = 1 - \frac{2c_{1}}{(s_{1}+2)}r^{-(s_{1}+2)}\{1+o(1)\},~~~\text{as $r\rightarrow\infty$}.
\]
\end{proof}

\subsection{Asymptotic behaviour of $-\ln c\{F_1(tx_1),F_2(tx_2)\}$}\label{subsec:-logc_EV}
For $V$ representing an exponent measure defined as in equation~\eqref{eqn:exponentMeasure}, an extreme value copula has the form
\[
	C(u,v) = \exp\left\{-V\left(\frac{-1}{\ln u},\frac{-1}{\ln v}\right)\right\}.
\]
Differentiating $C(u,v)$ with respect to the second argument, we have
\[
	F(u\mid v) = \frac{\partial C(u,v)}{\partial u} = -\frac{1}{v}(-\ln v)^{-2}V_2\left(\frac{-1}{\ln u},\frac{-1}{\ln v}\right)\exp\left\{-V\left(\frac{-1}{\ln u},\frac{-1}{\ln v}\right)\right\},
\]
and differentiating $C(u,v)$ with respect to both arguments yields the copula density
\begin{align*}
	c(u,v) = \frac{\partial^2 C(u,v)}{\partial u\partial v} =& \frac{1}{uv}(-\ln u)^{-2}(-\ln v)^{-2}\exp\left\{-V\left(\frac{-1}{\ln u},\frac{-1}{\ln v}\right)\right\}\left\{V_1\left(\frac{-1}{\ln u},\frac{-1}{\ln v}\right)V_2\left(\frac{-1}{\ln u},\frac{-1}{\ln v}\right)-V_{12}\left(\frac{-1}{\ln u},\frac{-1}{\ln v}\right)\right\},
\end{align*}
with $V_1$, $V_2$ and $V_{12}$ denoting the derivatives of the exponent measure with respect to the first, second, and both components, respectively.

For this class of models, setting exponential margins with $F_i(tx_i)=1-e^{-tx_i}$, $i=1,2$, we have
\begin{align*}
	&-\ln c\{F_1(tx_1),F_2(tx_2)\} \\
	& \hspace{0.5cm}=\ln\left(1-e^{-tx_1}\right) + \ln\left(1-e^{-tx_2}\right)+ 2\ln\left\{-\ln\left(1-e^{-tx_1}\right)\right\} + 2\ln\left\{-\ln\left(1-e^{-tx_2}\right)\right\}+V\left\{\frac{-1}{\ln\left(1-e^{-tx_1}\right)},\frac{-1}{\ln\left(1-e^{-tx_2}\right)}\right\}\\
	&\hspace{1cm}-\ln \Bigg[V_1\left\{\frac{-1}{\ln\left(1-e^{-tx_1}\right)},\frac{-1}{\ln\left(1-e^{-tx_2}\right)}\right\}V_2\left\{\frac{-1}{\ln\left(1-e^{-tx_1}\right)},\frac{-1}{\ln\left(1-e^{-tx_2}\right)}\right\}-V_{12}\left\{\frac{-1}{\ln\left(1-e^{-tx_1}\right)},\frac{-1}{\ln\left(1-e^{-tx_2}\right)}\right\}\Bigg].
\end{align*}
We impose the assumption that the corresponding spectral density $h(w)$ places no mass on $\{0\}$ or $\{1\}$ and has regularly varying tails. In particular, that $h(w)\sim c_1(1-w)^{s_1}$ as $w\nearrow 1$ and $h(w)\sim c_2w^{s_2}$ as $w\searrow 0$, for some $c_1,c_2\in\mathbb{R}$ and $s_1,s_2>-1$. By a result from \cite{ColesAndTawn1991}, we have 
\[
-V_{12}(x_{1},x_{2}) = \frac{2}{(x_{1}+x_{2})^{3}} h\left(\frac{x_1}{x_1+x_2}\right),
\]
and can therefore deduce that
\begin{align}
-V_{12}(1,r) \sim 2c_1r^{s_1},~~~\text{and}~~~-V_{12}(r,1) \sim 2c_2r^{s_2},~~~\text{as $r\rightarrow 0$}.
\label{eqn:V12results}
\end{align}
To investigate the behaviour of $-\ln c\{F_1(tx_1),F_2(tx_2)\}$ as $t\rightarrow\infty$, we consider three cases; $x_1<x_2$, $x_1=x_2$, and $x_1>x_2$.
\paragraph{Case 1: $x_1<x_2$}
{\small \begin{align*}
	-&\ln c\{F_1(tx_1),F_2(tx_2)\}\\
	=& -e^{-tx_1} -e^{-tx_2}+ 2\ln(e^{-tx_1}) + e^{-tx_1} + 2\ln(e^{-tx_2}) + e^{-tx_2} + O(e^{-2tx_1}) + O(e^{-2tx_2})+V\left\{\frac{1}{e^{-tx_1} + O(e^{-2tx_1})},\frac{1}{e^{-tx_2} + O(e^{-2tx_2})}\right\}\\
	&-\ln\bigg[V_1\left\{\frac{1}{e^{-tx_1} + O(e^{-2tx_1})},\frac{1}{e^{-tx_2} + O(e^{-2tx_2})}\right\} V_2\left\{\frac{1}{e^{-tx_1} + O(e^{-2tx_1})},\frac{1}{e^{-tx_2} + O(e^{-2tx_2})}\right\}\\
	&\hspace{4cm}- V_{12}\left\{\frac{1}{e^{-tx_1} + O(e^{-2tx_1})},\frac{1}{e^{-tx_2} + O(e^{-2tx_2})}\right\}\bigg]\\
	=&-2t(x_1+x_2) + O(e^{-2tx_1}) + O(e^{-2tx_2}) + V\left\{\frac{e^{tx_1}}{1 + O(e^{-tx_1})},\frac{e^{tx_2}}{1 + O(e^{-tx_2})}\right\}\\
	&-\ln\bigg[e^{-4tx_2}V_1\left\{\frac{e^{t(x_1-x_2)}}{1 + O(e^{-tx_1})},\frac{1}{1 + O(e^{-tx_2})}\right\} V_2\left\{\frac{e^{t(x_1-x_2)}}{1 + O(e^{-tx_1})},\frac{1}{1 + O(e^{-tx_2})}\right\}- e^{-3tx_2}V_{12}\left\{\frac{e^{t(x_1-x_2)}}{1 + O(e^{-tx_1})},\frac{1}{1 + O(e^{-tx_2})}\right\}\bigg]\\
	\sim& -2t(x_1+x_2) - \ln\left(e^{-3tx_2}\right) - \ln\left\{e^{ts_2(x_1-x_2)}\right\}~~~\text{by result~\eqref{eqn:V12results}}\\
	=& t\left\{(1+s_2)x_2 - (2+s_2)x_1\right\}
\end{align*}}

\paragraph{Case 2: $x_1=x_2$}
\begin{align*}
	-\ln c\{F_1(tx_1),F_2(tx_2)\} &= 2\ln(1-e^{-tx_1}) + 4\ln\{-\ln(1-e^{-tx_1})\}- \ln(1-e^{-tx_1})V(1,1)\\
	&~~-\ln\Big[\left\{-\ln(1-e^{-tx_1})\right\}^4 V_1(1,1)V_2(1,1) - \left\{-\ln(1-e^{-tx_1})\right\}^3 V_{12}(1,1) \Big] \\
	&= -4tx_1 -\ln\left\{-V_{12}(1,1)e^{-3tx_1}\right\} + O\left(e^{-tx_1}\right)\\
	&= -tx_1 + O\left(e^{-tx_1}\right)
\end{align*}

\paragraph{Case 3: $x_1>x_2$}
{\small \begin{align*}
	-&\ln c\{F_1(tx_1),F_2(tx_2)\} \\
	=& -e^{-tx_1} -e^{-tx_2} + 2\ln(e^{-tx_1}) + e^{-tx_1} + 2\ln(e^{-tx_2}) + e^{-tx_2} + O(e^{-2tx_1}) + O(e^{-2tx_2})+V\left\{\frac{1}{e^{-tx_1} + O(e^{-2tx_1})},\frac{1}{e^{-tx_2} + O(e^{-2tx_2})}\right\}\\
	&-\ln\bigg[V_1\left\{\frac{1}{e^{-tx_1} + O(e^{-2tx_1})},\frac{1}{e^{-tx_2} + O(e^{-2tx_2})}\right\} V_2\left\{\frac{1}{e^{-tx_1} + O(e^{-2tx_1})},\frac{1}{e^{-tx_2} + O(e^{-2tx_2})}\right\}\\
	&\hspace{4cm}- V_{12}\left\{\frac{1}{e^{-tx_1} + O(e^{-2tx_1})},\frac{1}{e^{-tx_2} + O(e^{-2tx_2})}\right\}\bigg]\\
	=&-2t(x_1+x_2) + O(e^{-2tx_1}) + O(e^{-2tx_2}) + V\left\{\frac{e^{tx_1}}{1 + O(e^{-tx_1})},\frac{e^{tx_2}}{1 + O(e^{-tx_2})}\right\}\\
	&-\ln\bigg[e^{-4tx_1}V_1\left\{\frac{1}{1 + O(e^{-tx_1})},\frac{e^{t(x_2-x_1)}}{1 + O(e^{-tx_2})}\right\} V_2\left\{\frac{1}{1 + O(e^{-tx_1})},\frac{e^{t(x_2-x_1)}}{1 + O(e^{-tx_2})}\right\}- e^{-3tx_1}V_{12}\left\{\frac{1}{1 + O(e^{-tx_1})},\frac{e^{t(x_2-x_1)}}{1 + O(e^{-tx_2})}\right\}\bigg]\\
	\sim& -2t(x_1+x_2) - \ln\left(e^{-3tx_1}\right) - \ln\left\{e^{ts_1(x_2-x_1)}\right\}~~~\text{by result~\eqref{eqn:V12results}}\\
	=&~ t\left\{(1+s_1)x_1 - (2+s_1)x_2\right\}.
\end{align*}}
These three cases can be combined into a single expression, so that as $t\rightarrow\infty$, we have
\begin{align*}
	-\ln c\{F_1(tx_1),F_2(tx_2)\} \sim t\big\{&\left(1+s_1\mathbbm{1}_{\{x_1\geq x_2\}}+s_2\mathbbm{1}_{\{x_1< x_2\}}\right)\max(x_1,x_2)- \left(2+s_1\mathbbm{1}_{\{x_1\geq x_2\}}+s_2\mathbbm{1}_{\{x_1< x_2\}}\right)\min(x_1,x_2)\big\}.
\end{align*}

\subsection{Asymptotic behaviour of $F_{1\mid 2}(tx_1\mid tx_2)$}\label{subsec:F12_EV}
For a bivariate extreme value copula, the conditional distribution function has the form
\begin{align*}
	F_{1\mid 2}(tx_1\mid tx_2) =& -\frac{1}{1-e^{-tx_2}}\{-\ln(1-e^{-tx_2})\}^{-2}V_2\left\{\frac{-1}{\ln(1-e^{-tx_1})},\frac{-1}{\ln(1-e^{-tx_2})}\right\}\exp\left[-V\left\{\frac{-1}{\ln(1-e^{-tx_1})},\frac{-1}{\ln(1-e^{-tx_2})}\right\}\right].
\end{align*}
To investigate the behaviour of $F_{1\mid 2}(tx_1\mid tx_2)$ as $t\rightarrow\infty$, we again consider three cases.
\paragraph{Case 1: $x_1<x_2$}
\begin{align*}
	F_{1\mid 2}(tx_1\mid tx_2) =& -\frac{1}{1-e^{-tx_2}}V_2\left\{\frac{\ln(1-e^{-tx_2})}{\ln(1-e^{-tx_1})},1\right\}\exp\left[\ln\left(1-e^{-tx_2}\right)V\left\{\frac{\ln(1-e^{-tx_2})}{\ln(1-e^{-tx_1})},1\right\}\right]\\
	=& -\left[1+e^{-tx_2}\{1+o(1)\}\right]V_2\left[\exp\{t(x_1-x_2)\}\left\{1+o(1)\right\},1\right]\exp\left[\ln\left(1-e^{-tx_2}\right)V\left\{\exp\{t(x_1-x_2)\}\{1+o(1)\},1\right\}\right]\\
	=& \left[1+e^{-tx_2}\{1+o(1)\}\right]\left[\frac{2c_{2}}{(s_{2}+1)}\exp\left\{t(x_1-x_2)(s_2+1)\right\}\{1+o(1)\}\right]\\
	&\hspace{0.5cm}\exp\left[\ln\left(1-e^{-tx_2}\right)\left\{\exp\{-t(x_1-x_2)\}\{1+o(1)\right\}\right]\hspace{2cm}\text{(by Proposition~\ref{prop:V2regvar(a)})}\\
	=& \left[1+e^{-tx_2}\{1+o(1)\}\right]\left[\frac{2c_{2}}{(s_{2}+1)}\exp\left\{t(x_1-x_2)(s_2+1)\right\}\{1+o(1)\}\right]\left[1-e^{-tx_1}\{1+o(1)\}\right]\\
	=& \frac{2c_{2}}{(s_{2}+1)}\exp\left\{t(x_1-x_2)(s_2+1)\right\}\{1+o(1)\}.
\end{align*}

\paragraph{Case 2: $x_1=x_2$}
\begin{align*}
	F_{1\mid 2}(tx_1\mid tx_2)=& -\frac{1}{1-e^{-tx_1}}V_2(1,1)\exp\left\{\ln(1-e^{-tx_1})V(1,1)\right\}\\
	=&-V_2(1,1)(1-e^{-tx_1})^{V(1,1)-1}\\
	=&-V_2(1,1)\{1+o(1)\}.
\end{align*}

\paragraph{Case 3: $x_1>x_2$}
\begin{align*}
	F_{1\mid 2}(tx_1\mid tx_2) =& -\frac{1}{1-e^{-tx_2}}V_2\left\{\frac{\ln(1-e^{-tx_2})}{\ln(1-e^{-tx_1})},1\right\}\exp\left[\ln\left(1-e^{-tx_2}\right)V\left\{\frac{\ln(1-e^{-tx_2})}{\ln(1-e^{-tx_1})},1\right\}\right]\\
	=& -\frac{1}{1-e^{-tx_2}}V_2\left[\exp\{t(x_1-x_2)\}\left\{1+o(1)\right\},1\right]\exp\left[\ln\left(1-e^{-tx_2}\right)V\left\{\exp\{t(x_1-x_2)\}\{1+o(1)\},1\right\}\right]\\
	=& \frac{1}{1-e^{-tx_2}}\left[1 - \frac{2c_{1}}{(s_{1}+2)}\exp\left\{-t(x_1-x_2)(s_{1}+2)\right\}\{1+o(1)\}\right]\\
	&\hspace{0.5cm}\cdot\exp\left(\ln\left(1-e^{-tx_2}\right)\left[1 + \frac{2c_1}{(s_1+1)(s_1+2)}\exp\{-t(s_1+2)(x_1-x_2)\}\{1+o(1)\}\right]\right)\text{~~~(by Proposition~\ref{prop:V2regvar(b)})}\\
	=& \frac{1-e^{-tx_2}}{1-e^{-tx_2}}\left[1 - \frac{2c_{1}}{(s_{1}+2)}\exp\left\{-t(x_1-x_2)(s_{1}+2)\right\}\{1+o(1)\}\right]\\
	&\hspace{0.5cm}\cdot\left[1 + \frac{2c_1}{(s_1+1)(s_1+2)}\exp\{-t(s_1+2)(x_1-x_2)-tx_2\}\{1+o(1)\}\right]\\
	=& 1 - \frac{2c_{1}}{(s_{1}+2)}\exp\left\{-t(x_1-x_2)(s_{1}+2)\right\}\{1+o(1)\}.
\end{align*}

\section{$-\ln c_{13|2}(u,v)$ for (inverted) extreme value copulas}\label{app:logc132(u,v)}
In Section~\ref{sec:vinesIEVEV}, we aim to investigate cases where the copula density in tree $T_2$ belongs to either the extreme value or inverted extreme value families of distributions. Here, we denote these by $c^{EV}_{13\mid 2}(u,v)$ and $c^{IEV}_{13\mid 2}(u,v)$, respectively. In Section~\ref{SMsec:IEVEVgauges}, we find that we should focus on the conditional distributions $F_{1\mid 2}(tx_1\mid tx_2)$ and $F_{3\mid 2}(tx_3\mid tx_2)$ having three different asymptotic forms. In this section, we therefore consider the behaviour of $-\ln c^{EV}_{13\mid 2}(u,v)$ and $-\ln c^{IEV}_{13\mid 2}(u,v)$ for $u$ and $v$ of the form
\begin{align*}
a\{1+o(1)\}~~~;~~~b_1\exp(-b_2t)\{1+o(1)\}~~~;~~~1-c_1\exp(-c_2t)\{1+o(1)\},
\end{align*}
for $b_2,c_2>0$, using the results that
\begin{align*}
-\ln c^{EV}_{13\mid 2}(u,v) =& \ln u + \ln v + 2\ln\left(-\ln u\right) + 2\ln\left(-\ln v\right)+V^{\{13|2\}}\left(\frac{-1}{\ln u},\frac{-1}{\ln v}\right)\\
&\hspace{-2cm}-\ln\left\{V_1^{\{13|2\}}\left(\frac{-1}{\ln u},\frac{-1}{\ln v}\right)V_2^{\{13|2\}}\left(\frac{-1}{\ln u},\frac{-1}{\ln v}\right)-V^{\{13|2\}}_{12}\left(\frac{-1}{\ln u},\frac{-1}{\ln v}\right)\right\},
\end{align*}
and
\begin{align*}
-\ln c^{IEV}_{13\mid 2}(u,v) =&\ln(1-u) + \ln(1-v) + 2\ln\left\{-\ln (1-u)\right\} + 2\ln\left\{-\ln (1-v)\right\}+V^{\{13|2\}}\left\{\frac{-1}{\ln (1-u)},\frac{-1}{\ln (1-v)}\right\}\\
&-\ln\bigg[V_1^{\{13|2\}}\left\{\frac{-1}{\ln (1-u)},\frac{-1}{\ln (1-v)}\right\}V_2^{\{13|2\}}\left\{\frac{-1}{\ln (1-u)},\frac{-1}{\ln (1-v)}\right\}-V^{\{13|2\}}_{12}\left\{\frac{-1}{\ln (1-u)},\frac{-1}{\ln (1-v)}\right\}\bigg].
\end{align*}
We also have the assumption that the spectral density $h^{\{13|2\}}(w)$ corresponding to the copula in tree $T_2$ places no mass on $\{0\}$ or $\{1\}$, and has $h^{\{13|2\}}(w)\sim c_1^{\{13|2\}}(1-w)^{s_1^{\{13|2\}}}$ as $w\nearrow 1$, and $h^{\{13|2\}}(w)\sim c_2^{\{13|2\}}w^{s_2^{\{13|2\}}}$ as $w\searrow 0$, for some $c_1^{\{13|2\}},c_2^{\{13|2\}}\in\mathbb{R}$ and $s_1^{\{13|2\}},s_2^{\{13|2\}}>-1$. In the following nine cases, we provide results for the asymptotic behaviour of $-\ln c^{EV}_{13\mid 2}(u,v)$ and $-\ln c^{IEV}_{13\mid 2}(u,v)$, as $t\rightarrow\infty$, for $u$ and $v$ taking different combinations of the forms stated above.

\paragraph{Case 1: $u=a_u\{1+o(1)\}$, $v=a_v\{1+o(1)\}$}
\begin{align*}
	-\ln c^{EV}_{13|2}(u,v) = o(t)
\end{align*}
\begin{align*}
	-\ln c^{IEV}_{13|2}(u,v) = o(t)
\end{align*}

\paragraph{Case 2: $u=a_u\{1+o(1)\}$, $v=b_{v,1}\exp(-b_{v,2}t)\{1+o(1)\}$}
\begin{align*}
	-\ln c^{EV}_{13|2}(u,v) &= o(t)
\end{align*}
\begin{align*}
	-\ln c^{IEV}_{13|2}(u,v) \sim t b_{v,2}\left(1 + s_2^{\{13|2\}}\right)
\end{align*}

\paragraph{Case 3: $u=a_u\{1+o(1)\}$, $v=1-c_{v,1}\exp(-c_{v,2}t)\{1+o(1)\}$}
\begin{align*}
	-\ln c^{EV}_{13|2}(u,v) \sim t c_{v,2}\left(1 + s_2^{\{13|2\}}\right)
\end{align*}
\begin{align*}
	-\ln c^{IEV}_{13|2}(u,v) = o(t)
\end{align*}

\paragraph{Case 4: $u=b_{u,1}\exp(-b_{u,2}t)\{1+o(1)\}$, $v=a_v\{1+o(1)\}$}
\begin{align*}
	-\ln c^{EV}_{13|2}(u,v) = o(t)
\end{align*}
\begin{align*}
	-\ln c^{IEV}_{13|2}(u,v) \sim t b_{u,2}\left(1 + s_1^{\{13|2\}}\right)
\end{align*}

\paragraph{Case 5: $u=b_{u,1}\exp(-b_{u,2}t)\{1+o(1)\}$, $v=b_{v,1}\exp(-b_{v,2}t)\{1+o(1)\}$}
\begin{align*}
	-\ln c^{EV}_{13|2}(u,v) \sim t \left\{ -b_{u,2}-b_{v,2} + V^{\{13|2\}}\left(1/b_{u,2},1/b_{v,2}\right) \right\} 
\end{align*}
\begin{align*}
	-\ln c^{IEV}_{13|2}(u,v) \sim &t\Big\{\left(1+s^{\{13|2\}}_1\mathbbm{1}_{\{b_{u,2}\geq b_{v,2}\}}+s^{\{13|2\}}_2\mathbbm{1}_{\{b_{u,2}< b_{v,2}\}}\right)\max(b_{u,2},b_{v,2}) \\&- \left(2+s^{\{13|2\}}_1\mathbbm{1}_{\{b_{u,2}\geq b_{v,2}\}}+s^{\{13|2\}}_2\mathbbm{1}_{\{b_{u,2}< b_{v,2}\}}\right)\min(b_{u,2},b_{v,2})\Big\}
\end{align*}

\paragraph{Case 6: $u=b_{u,1}\exp(-b_{u,2}t)\{1+o(1)\}$, $v=1-c_{v,1}\exp(-c_{v,2}t)\{1+o(1)\}$}
\begin{align*}
	-\ln c^{EV}_{13|2}(u,v) \sim t c_{v,2}\left(1 + s_2^{\{13|2\}}\right)
\end{align*}
\begin{align*}
	-\ln c^{IEV}_{13|2}(u,v) \sim t b_{u,2}\left(1 + s_1^{\{13|2\}}\right)
\end{align*}

\paragraph{Case 7: $u=1-c_{u,1}\exp(-c_{u,2}t)\{1+o(1)\}$, $v=a_v\{1+o(1)\}$}
\begin{align*}
	-\ln c^{EV}_{13|2}(u,v) \sim t c_{u,2}\left(1 + s_1^{\{13|2\}}\right)
\end{align*}
\begin{align*}
	-\ln c^{IEV}_{13|2}(u,v) = o(t)
\end{align*}

\paragraph{Case 8: $u=1-c_{u,1}\exp(-c_{u,2}t)\{1+o(1)\}$, $v=b_{v,1}\exp(-b_{v,2}t)\{1+o(1)\}$}
\begin{align*}
	-\ln c^{EV}_{13|2}(u,v) \sim t c_{u,2}\left(1 + s_1^{\{13|2\}}\right) 
\end{align*}
\begin{align*}
	-\ln c^{IEV}_{13|2}(u,v) \sim t b_{v,2}\left(1 + s_2^{\{13|2\}}\right)
\end{align*}

\paragraph{Case 9: $u=1-c_{u,1}\exp(-c_{u,2}t)\{1+o(1)\}$, $v=1-c_{v,1}\exp(-c_{v,2}t)\{1+o(1)\}$}
\begin{align*}
	-\ln c^{EV}_{13|2}(u,v) \sim &t\Big\{\left(1+s^{\{13|2\}}_1\mathbbm{1}_{\{c_{u,2}\geq c_{v,2}\}}+s^{\{13|2\}}_2\mathbbm{1}_{\{c_{u,2}< c_{v,2}\}}\right)\max(c_{u,2},c_{v,2}) \\&- \left(2+s^{\{13|2\}}_1\mathbbm{1}_{\{c_{u,2}\geq c_{v,2}\}}+s^{\{13|2\}}_2\mathbbm{1}_{\{c_{u,2}< c_{v,2}\}}\right)\min(c_{u,2},c_{v,2})\Big\}
\end{align*}
\begin{align*}
	-\ln c^{IEV}_{13|2}(u,v) \sim t \left\{ -c_{u,2}-c_{v,2} + V^{\{13|2\}}\left(1/c_{u,2},1/c_{v,2}\right) \right\}
\end{align*}

\section{Gauge function calculations for Section~\ref{sec:vinesIEVEV}}\label{SMsec:IEVEVgauges}
In Section~\ref{sec:vinesIEVEV}, we aim to investigate the coefficients $\eta_{\{1,2,3\}}$ and $\eta_{\{1,3\}}$ for trivariate vine copulas constructed from inverted extreme value and extreme value pair copulas. We present the necessary calculations here.

For any trivariate vine copula, $-\ln f(t\bm{x})$ has the form~\eqref{eqn:negLogTriVine}. In our gauge function calculations, we work with variables having exponential margins, so that the first three terms of this expression always satisfy
\begin{align}
	-\ln f_1\left(tx_1\right) -\ln f_2\left(tx_2\right) -\ln f_3\left(tx_3\right) = t\left(x_1+x_2+x_3\right),
\label{eqn:exponentialMarginsAsymptotic}
\end{align}
and we have $F_i(tx_i)=1-e^{-tx_i}$, for $i=1,2,3$. For inverted extreme value copulas, we have shown in equations~\eqref{eqn:negLogTriVine(a)}~and~\eqref{eqn:negLogTriVine(b)} that
\begin{align}
	-\ln c_{12}\left\{F_1(tx_1),F_2(tx_2)\right\} = t\left\{V^{\{12\}}\left(x_1^{-1},x_2^{-1}\right)-x_1-x_2\right\} + O(\ln t)
	\label{eqn:IEVcopula(12)}
\end{align}
and
\begin{align}
	-\ln c_{23}\left\{F_2(tx_2),F_3(tx_3)\right\} = t\left\{V^{\{23\}}\left(x_2^{-1},x_3^{-1}\right)-x_2-x_3\right\} + O(\ln t).
	\label{eqn:IEVcopula(23)}
\end{align}
Recall that in order to investigate the behaviour of the extreme value pair copula components, we impose the condition that the corresponding spectral densities place no mass on $\{0\}$ or $\{1\}$ and have regularly varying tails. Let $h^{\{12\}}(w)$, $h^{\{23\}}(w)$, $h^{\{13|2\}}(w)$ denote the spectral density for each pair copula component. We assume that each of these densities has $h^{\{\cdot\}}(w)\sim c_1^{\{\cdot\}}(1-w)^{s_1^{\{\cdot\}}}$ as $w\nearrow 1$ and $h^{\{\cdot\}}(w)\sim c_2^{\{\cdot\}}w^{s_2^{\{\cdot\}}}$ as $w\searrow 0$, for some $c_1^{\{\cdot\}},c_2^{\{\cdot\}}\in\mathbb{R}$ and $s_1^{\{\cdot\}},s_2^{\{\cdot\}}>-1$. In Section~\ref{subsec:-logc_EV} of the Supplementary Material, we show that
\begin{align}
	-\ln c_{12}\{F_1(tx_1),F_2(tx_2)\} \sim &t\Big\{\left(1+s^{\{12\}}_1\mathbbm{1}_{\{x_1\geq x_2\}}+s^{\{12\}}_2\mathbbm{1}_{\{x_1< x_2\}}\right)\max(x_1,x_2) \\&- \left(2+s^{\{12\}}_1\mathbbm{1}_{\{x_1\geq x_2\}}+s^{\{12\}}_2\mathbbm{1}_{\{x_1< x_2\}}\right)\min(x_1,x_2)\Big\},
	\label{eqn:EVcopula(12)}
\end{align}
and
\begin{align}
	-\ln c_{23}\{F_2(tx_2),F_3(tx_3)\} \sim &t\Big\{\left(1+s^{\{23\}}_1\mathbbm{1}_{\{x_2\geq x_3\}}+s^{\{23\}}_2\mathbbm{1}_{\{x_2< x_3\}}\right)\max(x_2,x_3) \\&- \left(2+s^{\{23\}}_1\mathbbm{1}_{\{x_2\geq x_3\}}+s^{\{23\}}_2\mathbbm{1}_{\{x_2< x_3\}}\right)\min(x_2,x_3)\Big\},
	\label{eqn:EVcopula(23)}
\end{align}
so that the final term of~\eqref{eqn:negLogTriVine}, i.e., $-\ln c_{13|2}\left\{F_{1|2}(tx_1|tx_2),F_{3|2}(tx_3|tx_2)\right\}$, is the only one left for us to consider.

In Section~\ref{subsec:IEVcopProperties} of the Supplementary Material, we showed that for the components in tree $T_1$ being inverted extreme value copulas, we have
\begin{align}
	F_{1\mid 2}(tx_1\mid tx_2) &= 1 + x_2^{-2}V^{\{12\}}_2\left(x_1^{-1},x_2^{-1}\right)\exp\left[t\left\{x_2 - V^{\{12\}}\left(x_1^{-1}, x_2^{-1}\right)\right\}\right]\nonumber\\
	&= 1 - a_{1|2}\exp\left[-t\left\{V^{\{12\}}\left(x_1^{-1}, x_2^{-1}\right)-x_2\right\}\right];
	\label{eqn:Fconditional(12)}
\end{align}
\begin{align}
	F_{3\mid 2}(tx_3\mid tx_2) &= 1 + x_2^{-2}V^{\{23\}}_1\left(x_2^{-1},x_3^{-1}\right)\exp\left[t\left\{x_2 - V^{\{23\}}\left(x_2^{-1}, x_3^{-1}\right)\right\}\right]\nonumber\\
	&= 1 - a_{3|2}\exp\left[-t\left\{V^{\{23\}}\left(x_2^{-1}, x_3^{-1}\right)-x_2\right\}\right],
	\label{eqn:Fconditional(23)}
\end{align}
for $a_{1|2} = -x_2^{-2}V^{\{12\}}_2\left(x_1^{-1},x_2^{-1}\right)$ and $a_{3|2}=- x_2^{-2}V^{\{23\}}_1\left(x_2^{-1},x_3^{-1}\right)$. We consider the extreme value case in Section~\ref{subsec:F12_EV} of the Supplementary Material. Placing the same conditions on the spectral density as for the $-\ln c_{12}$ and $-\ln c_{23}$ calculations, we obtain
\begin{align}
F_{1\mid 2}(tx_1\mid tx_2) =
	\begin{cases}
        \frac{2c^{\{12\}}_{2}}{(s^{\{12\}}_{2}+1)}\exp\left\{t(x_1-x_2)(s^{\{12\}}_2+1)\right\}\{1+o(1)\}, & x_1<x_2,\\
       -V^{\{12\}}_2(1,1)\{1+o(1)\}, & x_1=x_2,\\
        1 - \frac{2c^{\{12\}}_{1}}{(s^{\{12\}}_{1}+2)}\exp\left\{-t(x_1-x_2)(s^{\{12\}}_{1}+2)\right\}\{1+o(1)\}, & x_1>x_2,
	\end{cases}
	\label{eqn:FconditionalEV(12)}
\end{align}
and
\begin{align}
F_{3\mid 2}(tx_3\mid tx_2) =
	\begin{cases}
        \frac{2c^{\{23\}}_{1}}{(s^{\{23\}}_{1}+1)}\exp\left\{t(x_3-x_2)(s^{\{23\}}_1+1)\right\}\{1+o(1)\}, & x_3<x_2,\\
       -V^{\{23\}}_1(1,1)\{1+o(1)\}, & x_3=x_2,\\
        1 - \frac{2c^{\{23\}}_{2}}{(s^{\{23\}}_{2}+2)}\exp\left\{-t(x_3-x_2)(s^{\{23\}}_{2}+2)\right\}\{1+o(1)\}, & x_3>x_2.
	\end{cases}
	\label{eqn:FconditionalEV(23)}
\end{align}
So, as $t\rightarrow\infty$, the conditional distributions of the extreme value copula in equation~\eqref{eqn:FconditionalEV(12)} tends towards either $0$, $-V_2^{\{12\}}(1,1)$ or $1$, and expression~\eqref{eqn:FconditionalEV(23)} tends towards $0$, $-V_1^{\{23\}}(1,1)$ or $1$. We must therefore consider three different cases in our gauge function calculations for any extreme value copula in tree $T_1$. Based on the asymptotic forms of $F_{1\mid 2}(tx_1\mid tx_2)$ and $F_{3\mid 2}(tx_3\mid tx_2)$ for (inverted) extreme value copulas, we focus on investigating $-\ln c_{13\mid 2}(u,v)$ for $u$ and $v$ of the form
\begin{align*}
a\{1+o(1)\}~~~;~~~b_1\exp(-b_2t)\{1+o(1)\}~~~;~~~1-c_1\exp(-c_2t)\{1+o(1)\},
\end{align*}
for $b_2,c_2>0$. We provide these results in Section~\ref{app:logc132(u,v)} of the Supplementary Material for all nine combinations of the asymptotic forms of $u$ and $v$, and for $c_{13\mid 2}(u,v)$ being either an extreme value or inverted extreme value copula density.

Together, these results provide all the necessary information to calculate the gauge functions. We first demonstrate how to combine all these results to obtain the gauge function of a vine copula with two inverted extreme value copulas in tree $T_1$, and an extreme value copula in tree $T_2$, and subsequently find the gauge functions for the remaining cases.

\subsection{Inverted extreme value copulas in $T_1$; extreme value copula in $T_2$}
For this case, we use results \eqref{eqn:exponentialMarginsAsymptotic},~\eqref{eqn:IEVcopula(12)}~and~\eqref{eqn:IEVcopula(23)} in equation~\eqref{eqn:negLogTriVine}, which gives the form of $-\ln f(t\bm{x})$ for a trivariate vine copula. We have
\begin{align}
	-\ln f(t\bm{x}) =& t(x_1+x_2+x_3) + t\left\{V^{\{12\}}\left(x_1^{-1},x_2^{-1}\right)-x_1-x_2\right\}+ t\left\{V^{\{23\}}\left(x_2^{-1},x_3^{-1}\right)-x_2-x_3\right\}\nonumber\\
	& ~~~- \ln c_{13|2}\left\{F_{1|2}(tx_1|tx_2),F_{3|2}(tx_3|tx_2)\right\}+ O(\ln t)\nonumber\\
	=&t\left\{V^{\{12\}}\left(x_1^{-1},x_2^{-1}\right) + V^{\{23\}}\left(x_2^{-1},x_3^{-1}\right)-x_2\right\}- \ln c_{13|2}\left\{F_{1|2}(tx_1|tx_2),F_{3|2}(tx_3|tx_2)\right\}+ O(\ln t).
\label{eqn:IEVIEVEV_fiveterms}
\end{align}
From equations~\eqref{eqn:Fconditional(12)}~and~\eqref{eqn:Fconditional(23)}, we see that $F_{1|2}(tx_1\mid tx_2) = 1 - a_{1|2}\exp\{-b_{1|2}t\}$ and $F_{3|2}(tx_3\mid tx_2) = 1 - a_{3|2}\exp\{-b_{3|2}t\}$, for $b_{1|2} = V^{\{12\}}(x_1^{-1},x_2^{-1})-x_2$ and $b_{3|2} = V^{\{23\}}(x_2^{-1},x_3^{-1})-x_2$. Using results from case~9 of Section~\ref{app:logc132(u,v)}, we deduce that
\begin{align*}
- \ln c_{13|2}\left\{F_{1|2}(tx_1|tx_2),F_{3|2}(tx_3|tx_2)\right\}\sim& t\Big\{\left(1+s^{\{13|2\}}_1\mathbbm{1}_{\{b_{1|2}\geq b_{3|2}\}}+s^{\{13|2\}}_2\mathbbm{1}_{\{b_{1|2}< b_{3|2}\}}\right)\max(b_{1|2},b_{3|2})\\
	&\hspace{1cm}- \left(2+s^{\{13|2\}}_1\mathbbm{1}_{\{b_{1|2}\geq b_{3|2}\}}+s^{\{13|2\}}_2\mathbbm{1}_{\{b_{1|2}< b_{3|2}\}}\right)\min(b_{1|2},b_{3|2})\Big\}.
\end{align*}
Combining this with result~\eqref{eqn:IEVIEVEV_fiveterms}, we find that the required gauge function has the form
\begin{align*}
g(\bm{x}) &= x_2 + b_{1|2} + b_{3|2} + \left(1 + s_{\text{m}}^{\{13\mid 2\}}\right)\max(b_{1|2},b_{3|2}) - \left(2 + s_{\text{m}}^{\{13\mid 2\}}\right)\min(b_{1|2},b_{3|2})\\
&=x_2 + \left(2 + s_{\text{m}}^{\{13\mid 2\}}\right)\max(b_{1|2},b_{3|2}) - \left(1 + s_{\text{m}}^{\{13\mid 2\}}\right)\min(b_{1|2},b_{3|2})\\
&=\left(2 + s_{\text{m}}^{\{13\mid 2\}}\right)\max(b_{1|2}-x_2,b_{3|2}-x_2) - \left(1 + s_{\text{m}}^{\{13\mid 2\}}\right)\min(b_{1|2}-x_2,b_{3|2}-x_2),
\end{align*}
i.e.,
\begin{align*}
	g(\bm{x}) = &\left(2+s_{\text{m}}^{\{13\mid 2\}}\right)\max\left\{V^{\{12\}}\left(x_1^{-1},x_2^{-1}\right),V^{\{23\}}\left(x_2^{-1},x_3^{-1}\right)\right\} - \left(1+s_{\text{m}}^{\{13\mid 2\}}\right)\min\left\{V^{\{12\}}\left(x_1^{-1},x_2^{-1}\right),V^{\{23\}}\left(x_2^{-1},x_3^{-1}\right)\right\},
\end{align*}
with $\min(x_1,x_2,x_3)\geq 0$, and 
\[
s^{\{13\mid 2\}}_{\text{m}} = s^{\{13\mid 2\}}_1\mathbbm{1}_{\left\{V^{\{12\}}\left(x_1^{-1},x_2^{-1}\right) \geq V^{\{23\}}\left(x_2^{-1},x_3^{-1}\right)\right\}} + s^{\{13\mid 2\}}_2\mathbbm{1}_{\left\{V^{\{12\}}\left(x_1^{-1},x_2^{-1}\right) < V^{\{23\}}\left(x_2^{-1},x_3^{-1}\right)\right\}}.
\]

\subsection{Extreme value and inverted extreme value copulas in $T_1$; inverted extreme value copula in $T_2$}
To calculate the gauge function for this model, we use results \eqref{eqn:exponentialMarginsAsymptotic},~\eqref{eqn:IEVcopula(23)}~and~\eqref{eqn:EVcopula(12)} to give the asymptotic form of the first five terms of equation~\eqref{eqn:negLogTriVine}. For the final term, equations~\eqref{eqn:Fconditional(23)}~and~\eqref{eqn:FconditionalEV(12)} give the required form of the conditional distributions, and we apply the inverted extreme value results from cases~3,~6~and~9 of Section~\ref{app:logc132(u,v)} to yield the gauge function
\begin{align*}
g(\bm{x})=
	\begin{cases}
       \left(2+s_1^{\{13\mid 2\}}\right)\left(1+s_2^{\{12\}}\right)\left(x_2-x_1\right) + V^{\{23\}}\left(x_2^{-1},x_3^{-1}\right), & 0\leq x_1\leq x_2,\\
        x_2 + V^{\{13\mid 2\}} \left[\left\{\left(x_1-x_2\right)\left(2+s_1^{\{12\}}\right)\right\}^{-1}, \left\{V^{\{23\}}\left(x_2^{-1},x_3^{-1}\right)-x_2\right\}^{-1}\right]
 ,& 0\leq x_2<x_1 .
	\end{cases}
\end{align*}

\subsection{Extreme value and inverted extreme value copulas in $T_1$; extreme value copula in $T_2$}
Since the pair copulas in tree $T_1$ of this model are the same as in the previous example, the first five terms of equation~\eqref{eqn:negLogTriVine} will also be the same. To study the final term, we apply the extreme value results from cases~3,~6~and~9 of Section~\ref{app:logc132(u,v)}, and find that the gauge function is
\begin{align*}
g(\bm{x})=
	\begin{cases}
       x_2 + \left(1+s_2^{\{12\}}\right)\left(x_2-x_1\right) + \left(2+s_2^{\{13\mid 2\}}\right)\left\{ V^{\{23\}}\left(x_2^{-1},x_3^{-1}\right) -x_2\right\} , & 0\leq x_1\leq x_2,\\
       x_2 + \left( 2+s_{\text{m}}^{\{13\mid 2\}} \right)\max\left\{ \left(2+s_1^{\{12\}}\right)\left(x_1-x_2\right), V^{\{23\}}\left(x_2^{-1},x_3^{-1}\right) -x_2 \right\}\\
       ~~~-\left( 1+s_{\text{m}}^{\{13\mid 2\}} \right)\min\left\{ \left(2+s_1^{\{12\}}\right)\left(x_1-x_2\right), V^{\{23\}}\left(x_2^{-1},x_3^{-1}\right) -x_2 \right\}
 ,&  0\leq x_2<x_1 ,
	\end{cases}
\end{align*}
with 
\begin{align*}
s^{\{13\mid 2\}}_{\text{m}} &= s^{\{13\mid 2\}}_1\mathbbm{1}_{\left\{\left(2+s_1^{\{12\}}\right)\left(x_1-x_2\right) \geq V^{\{23\}}\left(x_2^{-1},x_3^{-1}\right) -x_2\right\}}+ s^{\{13\mid 2\}}_2\mathbbm{1}_{\left\{\left(2+s_1^{\{12\}}\right)\left(x_1-x_2\right) < V^{\{23\}}\left(x_2^{-1},x_3^{-1}\right) -x_2\right\}}.
\end{align*}

\subsection{Extreme value copulas in $T_1$; inverted extreme value copula in $T_2$}
For this model, the first five terms of equation~\eqref{eqn:negLogTriVine} are given by results~\eqref{eqn:exponentialMarginsAsymptotic}, \eqref{eqn:EVcopula(12)}~and~\eqref{eqn:EVcopula(23)}. Since both of the extreme value copulas in tree $T_1$ can have three different asymptotic behaviours, we require all nine inverted extreme value cases from Section~\ref{app:logc132(u,v)}, and the conditional distributions~\eqref{eqn:FconditionalEV(12)}~and~\eqref{eqn:FconditionalEV(23)} to study the final term of~\eqref{eqn:negLogTriVine}. The gauge function in this case is
\begin{align*}
g(\bm{x})=
	\begin{cases}
       x_2 + \left( 2+s_{\text{m}}^{\{13\mid 2\}} \right)\max\left\{ \left(1+s_2^{\{12\}}\right)\left(x_2-x_1\right), \left(1+s_1^{\{23\}}\right)\left(x_2-x_3\right) \right\}\\
       ~~~-\left( 1+s_{\text{m}}^{\{13\mid 2\}} \right)\min\left\{ \left(1+s_2^{\{12\}}\right)\left(x_2-x_1\right), \left(1+s_1^{\{23\}}\right)\left(x_2-x_3\right) \right\}, ~~~& \max(x_1,x_3)< x_2,\\
       x_2 + \left(2 + s_1^{\{13\mid 2\}}\right)\left(1+s_2^{\{12\}}\right)\left(x_2-x_1\right) + \left(2+s_2^{\{23\}}\right)\left(x_3-x_2\right), ~~~& x_1 < x_2\leq x_3 ,\\
       x_2 + \left(2 + s_2^{\{13\mid 2\}}\right)\left(1+s_1^{\{23\}}\right)\left(x_2-x_3\right) + \left(2+s_1^{\{12\}}\right)\left(x_1-x_2\right), ~~~& x_3< x_2 \leq x_1,\\
       x_2 + V^{\{13\mid 2\}}\left[\left\{\left(2+s_1^{\{12\}}\right)\left(x_1-x_2\right)\right\}^{-1} , \left\{\left(2+s_2^{\{23\}}\right)\left(x_3-x_2\right)\right\}^{-1}\right], ~~~& x_2 \leq\min(x_1,x_3),
	\end{cases}
\end{align*}
with $\min(x_1,x_2,x_3)\geq 0$ and
\begin{align*}
s^{\{13\mid 2\}}_{\text{m}} &= s^{\{13\mid 2\}}_1\mathbbm{1}_{\left\{\left(1+s_2^{\{12\}}\right)\left(x_2-x_1\right) \geq \left(1+s_1^{\{23\}}\right)\left(x_2-x_3\right) \right\}}+ s^{\{13\mid 2\}}_2\mathbbm{1}_{\left\{\left(1+s_2^{\{12\}}\right)\left(x_2-x_1\right) < \left(1+s_1^{\{23\}}\right)\left(x_2-x_3\right) \right\}}.
\end{align*}

\subsection{Extreme value copulas in $T_1$; extreme value copula in $T_2$}
Finally, for a vine copula where all three components are extreme value copulas, the first five terms of~\eqref{eqn:negLogTriVine} are the same as in the previous section, and we require all nine extreme value cases from Section~\ref{app:logc132(u,v)} to obtain the gauge function
\begin{align*}
g(\bm{x})=
	\begin{cases}
       x_2 + V^{\{13\mid 2\}}\left[\left\{\left(1+s_2^{\{12\}}\right)\left(x_2-x_1\right)\right\}^{-1} , \left\{\left(1+s_1^{\{23\}}\right)\left(x_2-x_3\right)\right\}^{-1}\right], ~~~& \max(x_1,x_3)\leq x_2,\\
       x_2 + \left( 2+s_2^{\{13\mid 2\}} \right)\left(2+s_2^{\{23\}}\right)\left(x_3-x_2\right) +  \left(1+s_2^{\{12\}}\right)\left(x_2-x_1 \right), ~~~& x_1 \leq x_2< x_3,\\
       x_2 + \left( 2+s_1^{\{13\mid 2\}} \right)\left(2+s_1^{\{12\}}\right)\left(x_1-x_2\right) +  \left(1+s_1^{\{23\}}\right)\left(x_2-x_3 \right), ~~~& x_3 \leq x_2< x_1 ,\\
       x_2 + \left( 2+s_{\text{m}}^{\{13\mid 2\}} \right)\max\left\{ \left(2+s_1^{\{12\}}\right)\left(x_1-x_2\right), \left(2+s_2^{\{23\}}\right)\left(x_3-x_2\right) \right\}\\
       ~~~-\left( 1+s_{\text{m}}^{\{13\mid 2\}} \right)\min\left\{ \left(2+s_1^{\{12\}}\right)\left(x_1-x_2\right), \left(2+s_2^{\{23\}}\right)\left(x_3-x_2\right) \right\}, ~~~& x_2 <\min(x_1,x_3),
	\end{cases}
\end{align*}
with $\min(x_1,x_2,x_3)\geq 0$ and
\begin{align*}
s^{\{13\mid 2\}}_{\text{m}} &= s^{\{13\mid 2\}}_1\mathbbm{1}_{\left\{\left(2+s_1^{\{12\}}\right)\left(x_1-x_2\right) \geq \left(2+s_2^{\{23\}}\right)\left(x_3-x_2\right) \right\}}+ s^{\{13\mid 2\}}_2\mathbbm{1}_{\left\{\left(2+s_1^{\{12\}}\right)\left(x_1-x_2\right) < \left(2+s_2^{\{23\}}\right)\left(x_3-x_2\right) \right\}}.
\end{align*}

\end{document}